\newcommand{\fref}[1]{\prettyref{#1}}
\newcounter{dummycnt}
\newcommand{\mynewthm}[3][]{%
  \def\PARAM{#1}
  \ifx\PARAM\empty
  \newtheorem{#2}[thmnum]{#3}
  \else
  \newtheorem{#2}{#3}[#1]
  \fi
  \newtheorem*{#2*}{#3}%
  \newrefformat{#2}{#3~\ref{##1}}%
}
\newcommand{\ThmLabel}{Theorem}
\newcommand{\PrpLabel}{Proposition}
\newcommand{\LemLabel}{Lemma}
\newcommand{\FctLabel}{Fact}
\newcommand{\CorLabel}{Corollary}
\newcommand{\DfnLabel}{Definition}
\newcommand{\ConvLabel}{Convention}
\newcommand{\NtnLabel}{Notation}
\newcommand{\CstLabel}{Construction}
\newcommand{\ExmLabel}{Example}
\newcommand{\RmkLabel}{Remark}
\newcommand{\QstLabel}{Question}
\newcommand{\ThmLabel}{\iflanguage{french}{Théorème}{Theorem}}
\newcommand{\PrpLabel}{Proposition}
\newcommand{\LemLabel}{\iflanguage{french}{Lemme}{Lemma}}
\newcommand{\FctLabel}{\iflanguage{french}{Fait}{Fact}}
\newcommand{\CorLabel}{\iflanguage{french}{Corollaire}{Corollary}}
\newcommand{\DfnLabel}{\iflanguage{french}{Définition}{Definition}}
\newcommand{\ConvLabel}{Convention}
\newcommand{\NtnLabel}{Notation}
\newcommand{\CstLabel}{Construction}
\newcommand{\ExmLabel}{\iflanguage{french}{Exemple}{Example}}
\newcommand{\RmkLabel}{\iflanguage{french}{Remarque}{Remark}}
\newcommand{\QstLabel}{Question}
\theoremstyle{plain}
\theoremstyle{definition}
\theoremstyle{remark}
\newcommand{\myenumlabel}[1]{\textnormal{(\roman{#1})}}
\newcounter{cycprfcnt}
\newenvironment{cycprf}%
{\begin{list}{\PackageWarning{begnac}{Label required for cycprf}}%
  {%
    \setcounter{cycprfcnt}{1}
    \setlength{\itemindent}{0.5\leftmargin}%
    \setlength{\leftmargin}{0pt}%
    \newcommand{\cpcurr}{\myenumlabel{cycprfcnt}}%
    \newcommand{\cpnext}{\addtocounter{cycprfcnt}{1}\cpcurr}%
    \newcommand{\cpnum}[1]{\setcounter{cycprfcnt}{##1}\cpcurr}%
    \newcommand{\cpfirst}{\cpnum{1}}%
    \newcommand{\impnext}{\cpcurr{} $\Longrightarrow$ \cpnext.}%
    \newcommand{\impfirst}{\cpcurr{} $\Longrightarrow$ \cpfirst.}%
    \newcommand{\eqnum}[2]{\cpnum{##1}{} $\Longleftrightarrow$ \cpnum{##2}.}%
  }%
}%
{\qedhere\end{list}}%
\def\indsym#1#2{%
  \setbox0=\hbox{$\m@th#1x$}%
  \kern\wd0%
  \hbox to 0pt{\hss$\m@th#1\mid$\hbox to 0pt{$\m@th#1^{#2}$\hss}\hss}%
  \lower.9\ht0\hbox to 0pt{\hss$\m@th#1\smile$\hss}%
  \kern\wd0}
\newcommand{\ind}[1][]{\mathop{\mathpalette\indsym{#1}}}
\def\nindsym#1#2{%
  \setbox0=\hbox{$\m@th#1x$}%
  \kern\wd0%
  \hbox to 0pt{\hss$\m@th#1\not$\kern1.4\wd0\hss}
  \hbox to 0pt{\hss$\m@th#1\mid$\hbox to 0pt{$\m@th#1^{#2}$\hss}\hss}%
  \lower.9\ht0\hbox to 0pt{\hss$\m@th#1\smile$\hss}%
  \kern\wd0}
\def\dotminussym#1#2{%
  \setbox0=\hbox{$\m@th#1-$}%
  \kern.5\wd0%
  \hbox to 0pt{\hss\hbox{$\m@th#1-$}\hss}%
  \raise.6\ht0\hbox to 0pt{\hss$\m@th#1.$\hss}%
  \kern.5\wd0}
\newcommand{\dotminus}{\mathbin{\mathpalette\dotminussym{}}}
\renewcommand{\emptyset}{\varnothing}
\def\models{\vDash}
\newcommand{\rest}{{\restriction}}
\DeclareMathOperator{\tp}{tp}
\DeclareMathOperator{\Th}{Th}
\DeclareMathOperator{\tS}{S}
\DeclareMathOperator{\dcl}{dcl}
\DeclareMathOperator{\acl}{acl}
\DeclareMathOperator{\diam}{diam}
\DeclareMathOperator{\id}{id}
\DeclareMathOperator{\dom}{dom}
\DeclareMathOperator{\CB}{CB}
\DeclareMathOperator{\RM}{RM}
\DeclareMathOperator{\dM}{dM}
\newcommand{\fM}{\mathfrak{M}}
\newcommand{\cC}{\mathcal{C}}
\newcommand{\cE}{\mathcal{E}}
\newcommand{\cG}{\mathcal{G}}
\newcommand{\cL}{\mathcal{L}}
\newcommand{\cM}{\mathcal{M}}
\newcommand{\cN}{\mathcal{N}}
\newcommand{\cX}{\mathcal{X}}
\newcommand{\bN}{\mathbb{N}}
\newcommand{\bQ}{\mathbb{Q}}
\newcommand{\dyad}[2][n]{\hbox{$\frac{#2}{2^{#1}}$}}
\begin{document}

\title{Definability of groups in $\aleph_0$-stable metric structures}

\author{Itaï \textsc{Ben Yaacov}}

\address{Itaï \textsc{Ben Yaacov} \\
  Université Claude Bernard -- Lyon 1 \\
  Institut Camille Jordan, CNRS UMR 5208 \\
  43 boulevard du 11 novembre 1918 \\
  69622 Villeurbanne Cedex \\
  France}

\urladdr{\url{http://math.univ-lyon1.fr/~begnac/}}

\thanks{Author supported by Marie Curie research network ModNet,
  by ANR chaire d'excellence junior THEMODMET (ANR-06-CEXC-007)
  and by the Institut Universitaire de France.}

\svnInfo $Id: DefOSGrp.tex 974 2009-09-05 10:49:40Z begnac $
\thanks{\textit{Revision} {\svnInfoRevision} \textit{of} \today}

%\date{\today}
\keywords{continuous logic, definable set, definable group, definable
  metric, $\aleph_0$-stability}
\subjclass[2000]{03C45,03C90}

\begin{abstract}
  We prove that in a continuous $\aleph_0$-stable theory every
  type-definable group is definable.
  The two main ingredients in the proof are:
  \begin{enumerate}
  \item Results concerning Morley ranks (i.e., Cantor-Bendixson
    ranks) from \cite{BenYaacov:TopometricSpacesAndPerturbations},
    allowing us to prove the
    theorem in case the metric is invariant under the group action; and
  \item  Results concerning the existence of
    translation-invariant definable metrics on type-definable groups and
    the extension of partial definable metrics to total ones.
  \end{enumerate}
\end{abstract}

\maketitle

\section*{Introduction}

Definable sets, as well as more complex definable objects
(e.g., groups) play a central
and essential role in classical model theory.
These are (usually) subsets of the ambient structure which are defined
by a single formula of classical first order logic.

Continuous first order logic was proposed in
\cite{BenYaacov-Usvyatsov:CFO} as an
extension of classical first order logic, obtained by replacing the
two-element set of truth values $\{T,F\}$ with the compact interval
$[0,1]$.
It allows to consider various
classes of complete metric structures as elementary classes and to
study definability therein.
However, some things do become more complicated in continuous logic,
and in particular the classical
notion of a definable set splits in two.
First, a set is a predicate, and a definable set is a
definable predicate, i.e., a definable function into the
set $\{T,F\}$ (or $\{0,1\}$).
As such, the correct analogue is a
\emph{definable continuous predicate}, i.e., a definable function to
$[0,1]$ -- it is definable in the
sense that it is given by a continuous first order formula, or, at the
very least, by a uniform limit of such.
But when thinking of definable objects, such as groups, there is an
essential asymmetry between what is inside (which interests us) and what
is outside (about which we could hardly care less, especially if the set
is stably embedded).
The same asymmetry arises when we wish to quantify over a definable
set.
In that case the notion of a definable predicate is inadequate and we
are led to the notion of a \emph{definable set} in continuous logic:
this is a closed set
the distance to which is a definable predicate, or equivalently, over
which we may quantify (see \fref{fct:DefSet} below).

The class of definable set in a continuous structure is far less
well-behaved than in classical logic.
For example, the family of all definable subsets of $M^n$ does not
form a Boolean algebra, as it is not always closed under complement or
intersection.
Worse still, non trivial definable sets need not always even exist.
In particular, there exist theories which do not admit
\emph{enough definable sets}, i.e., where definable sets do not
suffice to separate types.

\begin{exm}
  Let us consider the theory
  $T = \Th(\bQ,0,1,+,\leq)$.
  Like any other classical theory, it may be viewed as
  a continuous theory by identifying the truth values $T$ and $F$
  with $0$ and $1$, respectively.
  Specifically, we add an axiom
  $\sup_{xy}  d(x,y)\wedge\neg d(x,y) = 0$
  asserting that the distance (i.e., equality) predicate takes values in
  $\{0,1\}$, and similarly for the predicate $\leq$.

  We now define a continuous predicate
  $P(x) = st\bigl( (x\wedge 1) \vee 0 \bigr)$, namely the standard
  part of $x$ truncated at $0$ from below and at $1$ from above.
  Notice that in any model of $T$ and for every $r \in [0,1]$,
  the conditions $P(x) \geq r$ and $P(x) \leq r$ are type-definable:
  \begin{gather*}
    P(x) \geq r
    \quad \Longleftrightarrow \quad
    \{ nx \geq m\colon n,m \in \bN, nr > m\}, \\
    P(x) \leq r
    \quad \Longleftrightarrow \quad
    \{ nx \leq m\colon n,m \in \bN, nr < m\}.
  \end{gather*}
  It follows that $P$ is a definable predicate in $T$.

  Let us now consider the continuous theory
  $T_P = \Th(\bQ,0,m_q,+,P)_{q\in\bQ}$
  where $m_q$ denotes multiplication by $q$.
  This is a reduct of $T$, and we leave it as
  an easy exercise to the reader to check that it admits quantifier
  elimination as well.
  For a member $a$ of a model of $T_P$ consider the following two
  values
  \begin{gather*}
    r^+ = \inf \{ q \in \bQ^{>0} \colon P(r/q) < 1 \}, \qquad
    r^- = \inf \{ q \in \bQ^{>0} \colon P(-r/q) < 1 \}.
  \end{gather*}
  Notice that either $P(a) = 0$ or $P(-a) = 0$,
  so at least one of $r^+$ and $r^-$ is zero and we may define
  $r = r^+ - r^- \in [-\infty,\infty]$.
  For every number $q \in \bQ$ we have:
  \begin{gather*}
    P(qa) =
    \begin{cases}
      1 & qr \geq 1, \\
      rq & 0 \leq qr \leq 1, \\
      0 & qr \leq 0.
    \end{cases}
    \qquad (\text{where } 0\cdot \infty = 0)
  \end{gather*}
  Thus $r$ determines the type of $a$.
  We obtain a bijection
  $\tS_1(T_P) \to [-\infty,\infty]$ which is easily checked to be
  continuous and therefore a homeomorphism.

  On the other hand, a model of $T_P$ carries the discrete
  $0/1$ metric.
  Thus, if $X$ is a definable set, then its complement is
  definable as well: $d(x,X^c) = \neg d(x,X)$.
  If $X$ is definable without parameters then the corresponding closed
  set $[X] \subseteq \tS_1(T_P)$ must be clopen, i.e.,
  either $\emptyset$ or all of $\tS_1(T_P)$.

  We conclude that a model of $T_P$ admits no non trivial
  $\emptyset$-definable sets (in $1$-space),
  so $T_P$ does not have enough definable sets.
\end{exm}

Since all known examples of this pathology are unstable it makes sense to
ask whether all stable continuous theories admit enough definable
sets.

One of the beautiful aspects of stable group theory in classical logic
is the proof that there are also ``enough definable groups'', namely,
that every type-definable group is the intersection of definable
subgroups of a definable group.
In the case of an $\aleph_0$-stable theory, chain conditions along with the
previous general fact yield that every type-definable group is
definable.
In continuous logic we can prove adequate analogues of the chain
conditions for sequences of definable (or type-definable) groups for
$\aleph_0$-stable theories, but we do not know how to prove enough
definable groups exist in stable theories.

In the present paper we give a direct proof of the fact that in an
$\aleph_0$-stable theory every type-definable group is definable, leaving
open the question of the existence of definable groups in general
stable theories.
In the special case of the theory of probability algebras this has
already been proved by Alexander Berenstein
\cite{Berenstein:DefinableSubgroupsOfMeasureAlgebras}.

In \fref{sec:Definability} we discuss various definability classes of
sets and functions.

In \fref{sec:MainThmApprox} we prove the main theorem
using some technical
results concerning Morley ranks (i.e., Cantor-Bendixson ranks)
from \cite{BenYaacov:TopometricSpacesAndPerturbations}.
We do this under the
assumption of the invariance of the metric under the group
operation (\fref{thm:MainThmApprox}).
The rest of the paper aims towards the removal of this assumption.

In \fref{sec:DefMet} we study definable metrics other than the
standard one.
In particular, we study when and how partial definable metrics (on a
definable or type-definable set) can be extended to total ones.

In \fref{sec:MainThmFull} we prove the full version of main theorem
and give some corollaries.

In \fref{sec:ChainConditions} we use several earlier results to
prove some chain conditions for \hbox{(type-)definable}
groups in stable and $\aleph_0$-stable theories.

\medskip

We assume familiarity with the development of continuous logic as
developed in the first half of \cite{BenYaacov-Usvyatsov:CFO}.
We shall also use facts regarding general stability and stable groups
from the second half of
\cite{BenYaacov-Usvyatsov:CFO} and from \cite{BenYaacov:StableGroups},
as well as regarding $\aleph_0$-stability and
topometric Cantor-Bendixson ranks from
\cite{BenYaacov:TopometricSpacesAndPerturbations}.

\section{Definability properties}
\label{sec:Definability}

This section consists mostly of definitions and relatively easy facts.
Some of the facts presented here also appear in
\cite[Section~9]{BenYaacov-Berenstein-Henson-Usvyatsov:NewtonMS}.

\subsection{Definability classes of sets}

\begin{dfn}
  \begin{enumerate}
  \item A \emph{type-definable set} $X$ is the set of
    realisations of an arbitrary set of conditions
    $\{\varphi_i(x) = 0\colon i < \lambda\}$.
  \item A \emph{zero set} $X$ is the set of realisations of a single
    condition $\psi(x) = 0$, where $\psi(x)$ is a definable
    predicate.
    We then say that $X$ is the zero set of $\psi$.
  \item A \emph{definable set} $X$ is a closed set for which
    $d(x,X)$ is a definable predicate.
    We say that $X$ is definable over a parameter set $A$,
    or that it is $A$-definable, if $d(x,X)$
    definable over $A$.
  \end{enumerate}
  Type-definable sets and zero sets will only be considered in
  sufficiently saturated structures, while definable sets make sense
  in an arbitrary structure.
\end{dfn}

Clearly, every zero set is type-definable, and every definable set $X$
is the zero set of $d(x,X)$.
In terms of types, we know that a type-definable set $X$ corresponds
to closed sets of types $[X] \subseteq \tS_n(A)$ where $A$ contains all the
parameters appearing in the definition of $X$.

\begin{dfn}
  Let $X$ and $Y$ be two type-definable sets.
  We say that $Y$ is a \emph{logical neighbourhood} of $X$, in symbols
  $X < Y$, if there is a set of parameters $A$ over which both $X$ and
  $Y$ are defined such that $[X] \subseteq [Y]^\circ$ in $\tS_n(A)$.
\end{dfn}

Notice that the interior of $[Y]$ does depend on $A$
(i.e., if $A' \supseteq A$ then $[Y]^\circ$
calculated in $\tS_n(A')$ may be larger than the
pullback of the interior of $[Y]$ in $\tS_n(A)$).
We may nonetheless choose any parameter set we wish:

\begin{lem}
  \label{lem:LogNeighb}
  Assume that $X$ is type-definable with parameters in $B$, $Y$
  type-definable possibly with additional parameters not in $B$.
  Then:
  \begin{enumerate}
  \item If $X < Y$ then $[X] \subseteq [Y]^\circ$ in $\tS_n(A)$
    for any set $A$ over which both
    $X$ and $Y$ are defined.
  \item If $X < Y$ then there is an intermediate logical neighbourhood
    $X < Z < Y$, which can moreover be taken to be the zero set of a formula
    with parameters in $B$.
  \item If $Y\cap X = \emptyset$ then there is a logical neighbourhood
    $Z > X$ such that $Z \cap Y = \emptyset$.
    Moreover, we may take $Z$ to be a zero set defined over $B$.
  \end{enumerate}
\end{lem}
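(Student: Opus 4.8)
The plan is to work entirely on the type-space side, writing $[W]_A$ for the trace in $\tS_n(A)$ of a type-definable set $W$ defined over $A$, and to use three standard facts. (a) Each $[W]_A$ is closed, and for $A \subseteq A'$ the trace $[W]_{A'}$ is the preimage of $[W]_A$ under the restriction map $\tS_n(A') \to \tS_n(A)$; moreover a statement like $W \cap W' = \emptyset$ holds in a sufficiently saturated model iff $[W]_A \cap [W']_A = \emptyset$ for some (equivalently any) $A$ over which both are defined. (b) Each restriction map $\tS_n(A') \to \tS_n(A)$ is a continuous, closed, surjective map of compact Hausdorff spaces. (c) Every continuous $[0,1]$-valued function on $\tS_n(A)$ is the value of a definable predicate over $A$; since $\tS_n(A)$ is compact Hausdorff, hence normal, Urysohn's lemma supplies, for any disjoint closed $C_0,C_1 \subseteq \tS_n(A)$, a definable predicate $h$ over $A$ with $h \equiv 0$ on $C_0$ and $h \equiv 1$ on $C_1$; then the zero set $Z$ of the definable predicate $h \dotminus 1/2$ has $[Z]_A = \{h \le 1/2\}$, so $C_0 \subseteq \{h < 1/2\} \subseteq [Z]_A^\circ$ while $C_1 \cap [Z]_A = \emptyset$.

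For (i) I would argue as follows. Suppose $X < Y$ is witnessed over $A_0$, and let $A$ be any parameter set over which both $X$ and $Y$ are defined; put $A' = A \cup A_0$. Pulling $[X]_{A_0} \subseteq [Y]_{A_0}^\circ$ back along $\tS_n(A') \to \tS_n(A_0)$ gives $[X]_{A'} \subseteq [Y]_{A'}^\circ$. Now push this down the closed surjection $\sigma\colon \tS_n(A') \to \tS_n(A)$: the closed set $C := \tS_n(A') \setminus [Y]_{A'}^\circ$ is disjoint from $[X]_{A'} = \sigma^{-1}([X]_A)$, so $\sigma(C)$ is closed and disjoint from $[X]_A$, and one checks that $V := \tS_n(A) \setminus \sigma(C)$ is an open set with $[X]_A \subseteq V \subseteq [Y]_A$. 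Hence $[X]_A \subseteq [Y]_A^\circ$.

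For (ii) and (iii) the arguments run in parallel. Using (i), I may assume the hypothesis is witnessed over $A := B\bar b$, where $\bar b$ lists the parameters of $Y$ lying outside $B$; let $\pi\colon \tS_n(A) \to \tS_n(B)$ be the restriction, so that $[X]_A = \pi^{-1}([X]_B)$ as $X$ is over $B$. Put $C := \tS_n(A) \setminus [Y]_A^\circ$ in case (ii) and $C := [Y]_A$ in case (iii); in either case $C$ is closed and, by the hypothesis, disjoint from $\pi^{-1}([X]_B)$, so $\pi(C)$ is a closed subset of $\tS_n(B)$ disjoint from $[X]_B$. Feeding the disjoint closed sets $[X]_B$ and $\pi(C)$ into the Urysohn construction of the first paragraph yields a definable predicate $h$ over $B$; let $Z$ be the zero set of $h \dotminus 1/2$. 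Then $[X]_B \subseteq \{h < 1/2\} \subseteq [Z]_B^\circ$, so $X < Z$; and $[Z]_A = \pi^{-1}(\{h \le 1/2\})$ is disjoint from $C$ (since $h \equiv 1$ on $\pi(C)$), which in case (ii) says $[Z]_A \subseteq [Y]_A^\circ$, i.e.\ $Z < Y$, and in case (iii) says $[Z]_A \cap [Y]_A = \emptyset$, i.e.\ $Z \cap Y = \emptyset$. In both cases $Z$ is a zero set defined over $B$.

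I expect the only real subtlety to be in (i): the tempting shortcut of extracting a single formula witnessing $X < Y$ over $A_0$ does not obviously transfer to a parameter set $A$ incomparable with $A_0$, so one must instead pass through $A \cup A_0$ and exploit that restriction maps of type spaces are closed. Granting (i), parts (ii) and (iii) amount to a single application of Urysohn's lemma on $\tS_n(B)$ after pushing the obstructing closed set down the closed projection $\pi$; the remaining verifications are routine bookkeeping with compact Hausdorff spaces and with the fact that definable predicates over $A$ are exactly the continuous functions on $\tS_n(A)$.
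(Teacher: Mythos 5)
Your argument is correct, but it takes a genuinely different route from the paper's. The paper proves all three items at once by a single compactness extraction on the formula side: it considers the family $\Phi$ of formulae over $B$ vanishing on $X$, notes that $\Phi$ is closed under $\vee$ so that the conditions $\varphi(\bar x)\leq r$ form a directed family cutting out $[X]$, and extracts by compactness a single condition $\varphi(\bar x)\leq r$ (with $\varphi$ over $B$) already implying $\bar x\in Y$; the zero set of $\varphi\dotminus r'$ for $0<r'<r$ then serves as $Z$, and item (i) falls out as a byproduct because $[\varphi<r']$ is open in $\tS_n(A)$ for every $A\supseteq B$. You instead argue purely topologically: you prove (i) first, using that restriction maps between type spaces are closed surjections of compact Hausdorff spaces (pulling the witness up to $A\cup A_0$ and pushing the complement of the interior back down), and then obtain (ii) and (iii) uniformly by pushing the obstructing closed set down to $\tS_n(B)$ along the closed projection $\pi$ and applying Urysohn's lemma there. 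Both are sound, and your version of (i) is in fact slightly more robust, since it handles an arbitrary $A$ over which $X$ and $Y$ are defined without assuming $A\supseteq B$, whereas the single-formula argument implicitly needs the parameters of $\varphi$ to lie in $A$. One point to tidy up: the lemma asks for $Z$ to be the zero set of a \emph{formula} over $B$, while Urysohn hands you a continuous function on $\tS_n(B)$, i.e.\ a definable predicate, which is only a uniform limit of formulae. Since your construction has slack on both sides ($h\equiv 0$ on $[X]_B$, $h\equiv 1$ on $\pi(C)$, cut made at $1/2$), replacing $h$ by a formula over $B$ within $1/4$ of it in the uniform norm and keeping the same cut repairs this immediately.
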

\begin{proof}
  Assume $X < Y$, where $X$ is type-definable over $B$, and $Y$ over
  $A \supseteq B$.
  Let $\Phi$ consist of all formulae $\varphi(\bar x)$ over $B$ which
  are zero on $X$.
  If $\varphi,\psi \in \Phi$ then $\varphi\vee\psi \in \Phi$,
  and $X$ is defined by the partial type
  $p(\bar x)
  = \{\varphi(\bar x) \leq r\colon \varphi \in \Phi, r > 0\}$.
  By compactness in $\tS_n(A)$
  there is a condition $\varphi(\bar x) \leq r$ in $p(\bar x)$
  which already implies $\bar x \in Y$.
  Let $Z$ be the zero set of
  the formula $\varphi(\bar x) \dotminus r'$
  where $0 < r' = \frac{k}{2^{-m}} < r$.

  Then in $\tS_n(A)$ we have
  $[X] \subseteq [\varphi(\bar x) < r']
  \subseteq [\varphi(\bar x) \leq r']
  \subseteq [\varphi(\bar r) < r]
  \subseteq [Y]$,
  i.e.,
  $[X] \subseteq [Z]^\circ \subseteq [Z] \subseteq [Y]^\circ$,
  proving the first two items.
  The third item now follows from the fact that $\tS_n(A)$ is a normal
  topological space.
\end{proof}

\begin{lem}
  \label{lem:ZeroSetGd}
  A type-definable set $X$ is a zero set if and only if
  $[X]$ is a (closed) $G_\delta$ set.
\end{lem}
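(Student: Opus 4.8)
The plan is to prove both directions. For the easy direction, suppose $X$ is a zero set, say $X = \{\psi = 0\}$ for a definable predicate $\psi$. Since $\psi$ is a definable predicate, for each rational $r > 0$ the condition $\psi(\bar x) < r$ defines an open subset of the type space, and moreover $[X] = \bigcap_{k \geq 1} [\psi < \inv{k}] = \bigcap_{k \geq 1} [\psi \leq \inv{k}]$. The inner intersections exhibit $[X]$ as a countable intersection of open sets, and as a countable intersection of closed sets it is visibly closed; hence $[X]$ is a closed $G_\delta$.

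For the converse, suppose $X$ is type-definable and $[X] = \bigcap_{k} U_k$ with each $U_k$ open in some $\tS_n(A)$ (where $A$ carries the parameters defining $X$; note a closed $G_\delta$ in a second-countable-looking space — here $\tS_n(A)$ — is an intersection of countably many open sets). The idea is to replace each $U_k$ by a logical neighbourhood of $X$ that is itself a zero set, using \fref{lem:LogNeighb}. Concretely, since $[X] \subseteq U_k$ and $U_k$ is open, $X < Y_k$ where $Y_k$ is the complement of a small closed neighbourhood — more carefully, one produces by \fref{lem:LogNeighb}(ii) a zero set $Z_k$ with $X < Z_k$ and $[Z_k] \subseteq U_k$, defined by a formula $\psi_k$ over $B$ (the parameters of $X$), which we may normalise so that $0 \leq \psi_k \leq 1$. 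Then $\psi = \sum_k 2^{-k} \psi_k$ is a definable predicate (the series converges uniformly, hence the limit is definable), and its zero set is $\bigcap_k [Z_k]$. Since $[X] \subseteq \bigcap_k [Z_k] \subseteq \bigcap_k U_k = [X]$, we get $[X] = \{\psi = 0\}$, so $X$ is a zero set.

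The main obstacle is the passage from ``$[X]$ is $G_\delta$ in $\tS_n(A)$'' to a genuine countable family of zero-set logical neighbourhoods shrinking to $X$: one must check that a $G_\delta$ here really is a \emph{countable} intersection of opens (which uses that $[X]$ is closed, hence compact, and a routine compactness/normality argument — or directly the metrizability of the relevant portion of type space in the $\aleph_0$-stable or at least countable-language setting), and then invoke \fref{lem:LogNeighb}(ii) to interpolate a zero set inside each open set while keeping the parameters in $B$. Once those zero sets $Z_k$ are in hand, assembling $\psi = \sum_k 2^{-k}\psi_k$ and verifying its zero set is exactly $[X]$ is routine. A minor point to be careful about: the interior $[Z_k]^\circ$ and the containment $[Z_k] \subseteq U_k$ should be read in the fixed space $\tS_n(A)$, and \fref{lem:LogNeighb}(i) guarantees this is unambiguous.
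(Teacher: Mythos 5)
Your proof is correct and amounts to the same argument the paper gives: the paper simply cites the standard topological fact that in a normal space a closed set is a zero set of a continuous $[0,1]$-valued function iff it is $G_\delta$ (via $\omega$ applications of Urysohn's Lemma), and your construction — interpolating zero sets $Z_k$ via \fref{lem:LogNeighb} and summing $\sum_k 2^{-k}\psi_k$ — is precisely that construction transported to $\tS_n(A)$. The ``main obstacle'' you flag is not one: a $G_\delta$ set is by definition a countable intersection of open sets, so no second-countability or metrizability of $\tS_n(A)$ is needed (and indeed these may fail over uncountable parameter sets).
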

\begin{proof}
  This is just a topological statement, saying that in a compact
  Hausdorff space $Y$, a closed subset $K \subseteq Y$ is the zero set of some
  $f \in C(Y,[0,1])$ if and only if it is a $G_\delta$ set.
  This is in fact true in an arbitrary normal space: left to right is
  immediate, while right to left involves a straightforward
  construction using $\omega$ applications Urysohn's Lemma.
\end{proof}

It follows that finite unions and countable intersections of
zero sets are zero sets.
In particular, a set $X$ which is type-definable by a countable set
of conditions is a zero set.

Later on we shall use the following result:
\begin{lem}
  \label{lem:ZeroSetByCoords}
  Let $X = \prod_{i<n} X_i^{m_i}$ be a type-definable set, (so each $X_i$
  is one) and $Y \supseteq X$ a zero set.
  Then there are zero sets $Y_i \supseteq X_i$ such that
  $Y \supseteq \prod_{i<n} Y_i^{m_i}$.
\end{lem}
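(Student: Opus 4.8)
The plan is to pass to type spaces and invoke \fref{lem:ZeroSetGd} twice: once to unpack $Y$, and once to package the $Y_i$. Fix a parameter set $A$ over which $Y$ and all the $X_i$ are defined, let $N$ be the total number of variables, and for each $i<n$ and $j<m_i$ let $\pi_{i,j}\colon\tS_N(A)\to\tS_{n_i}(A)$ be the projection onto the block of variables corresponding to the $j$-th copy of $X_i$, so that $[X]=\bigcap_{i,j}\pi_{i,j}^{-1}\bigl([X_i]\bigr)$. Since $Y$ is a zero set, $[Y]$ is a closed $G_\delta$ by \fref{lem:ZeroSetGd}, so write $[Y]=\bigcap_{k<\omega}V_k$ with each $V_k\subseteq\tS_N(A)$ open.

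The first and only delicate step is to replace each $V_k$ by a box of the correct shape: I claim there are open $W_{i,k}\supseteq[X_i]$ in $\tS_{n_i}(A)$ with $\bigcap_{i,j}\pi_{i,j}^{-1}(W_{i,k})\subseteq V_k$. This is a tube-lemma argument run through the projections. The set $\tS_N(A)\setminus V_k$ is compact and disjoint from $[X]=\bigcap_{i,j}\pi_{i,j}^{-1}([X_i])$; for each $p$ in it some coordinate $\pi_{i,j}(p)$ avoids the closed set $[X_i]$, so by regularity of the compact Hausdorff space $\tS_{n_i}(A)$ we may choose an open $O_p\ni\pi_{i,j}(p)$ with $\overline{O_p}\cap[X_i]=\emptyset$, whence $\pi_{i,j}^{-1}(O_p)$ is an open neighbourhood of $p$ missing $\pi_{i,j}^{-1}([X_i])$. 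Extracting a finite subcover yields opens $O_t$ attached to coordinates $(i_t,j_t)$; setting $W_{i,k}:=\tS_{n_i}(A)\setminus\bigcup\{\overline{O_t}\colon i_t=i\}$ produces open sets containing the respective $[X_i]$ whose preimage-intersection misses $\tS_N(A)\setminus V_k$, as required. (The point worth flagging is that $\tS_N(A)$ is \emph{not} the topological product of the coordinate type spaces, so the tube lemma must be phrased via the projections rather than over an honest product; this is exactly where the argument could go wrong if one is not careful.)

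The second step is routine Urysohn shrinking, one coordinate at a time. Fix $i$; using normality of $\tS_{n_i}(A)$, construct a decreasing sequence of open sets $U_{i,k}$ with $[X_i]\subseteq U_{i,k}$, $\overline{U_{i,k}}\subseteq W_{i,k}$, and $\overline{U_{i,k+1}}\subseteq U_{i,k}$ for all $k$. Then $[Y_i]:=\bigcap_k U_{i,k}=\bigcap_k\overline{U_{i,k}}$ is simultaneously closed and $G_\delta$, contains $[X_i]$, and satisfies $[Y_i]\subseteq W_{i,k}$ for every $k$; by \fref{lem:ZeroSetGd} the corresponding type-definable set $Y_i$ is a zero set, and obviously $X_i\subseteq Y_i$. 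Finally $\bigl[\prod_i Y_i^{m_i}\bigr]=\bigcap_{i,j}\pi_{i,j}^{-1}([Y_i])\subseteq\bigcap_{i,j}\pi_{i,j}^{-1}(W_{i,k})\subseteq V_k$ for every $k$, hence $\bigl[\prod_i Y_i^{m_i}\bigr]\subseteq\bigcap_k V_k=[Y]$, i.e.\ $\prod_i Y_i^{m_i}\subseteq Y$. The whole difficulty sits in the first step; the rest is bookkeeping.
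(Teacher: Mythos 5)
Your proof is correct, but it takes a genuinely different route from the paper's. The paper reduces to the two-factor case $Y \supseteq X \times X'$ and, for each $n$, applies compactness to the inconsistent partial type $\{x \in X\} \cup \{y \in X'\} \cup \{\varphi(x,y) \geq 2^{-n}\}$ (where $\varphi$ is the predicate whose zero set is $Y$) to extract a logical neighbourhood $Y_n > X'$, then takes $Y' = \bigcap_n Y_n$ and cleans up by intersecting countably many zero sets to handle all factors and repeated occurrences of the same $X_i$. You instead work entirely inside $\tS_N(A)$: you unpack $[Y]$ as a countable intersection of opens via \fref{lem:ZeroSetGd}, run a tube-lemma argument through the restriction maps $\pi_{i,j}$ to box each open, and then do a Urysohn-type shrinking to land back in closed $G_\delta$ sets. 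Your tube-lemma step is sound, and you are right to flag that $\tS_N(A)$ is not the product of the coordinate type spaces -- phrasing everything through the projections (which are continuous, with $[X]=\bigcap_{i,j}\pi_{i,j}^{-1}([X_i])$ because membership in $X_i$ depends only on the type over $A$) is exactly what makes it legitimate. What your version buys is that all coordinates and all repetitions are treated simultaneously, so no iteration or final intersection of zero sets is needed; what the paper's version buys is brevity, since it leans on the already-established machinery of logical neighbourhoods (\fref{lem:LogNeighb}) and the closure of zero sets under countable intersection rather than redoing the point-set topology by hand.
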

\begin{proof}
  We only show that if $X \times X'$ is a type-definable set and
  $Y \supseteq X \times X'$ is a zero set then there is a zero set
  $Y' \supseteq X'$ such
  that $Y \supseteq X \times Y'$.
  The result then follows since the intersection of finitely (or even
  countably) many zero sets is a zero set.

  Let $\varphi(x,y) = 0$ define $Y$.
  For $n < \omega$ consider the partial type
  $\{x \in X\} \cup \{y \in X'\} \cup \{\varphi(x,y) \geq 2^{-n}\}$.
  As it is inconsistent $Y$ admits a logical neighbourhood
  $Y_n > X'$ such that
  $\{x \in X\} \cup \{y \in Y_n\} \cup \{\varphi(x,y) \geq 2^{-n}\}$
  is inconsistent.
  Moreover, choosing the sets $Y_n$ by induction on $n$ we may arrange
  that $Y_n > Y_{n+1}$.
  Let $Y' = \bigcap Y_n$.
  Then $[Y'] = \bigcap [Y_n]$ is a closed $G_\delta$ set, and
  $Y \supseteq X\times Y'$.
\end{proof}

In many situations we may wish do show that if a condition holds on a
type-definable set then it holds on some zero set containing it.
This is (tautologically) the case if the condition itself is a zero set.
It is still true if the condition in question is a containment (i.e.,
implication) of zero sets.
\begin{lem}
  \label{lem:ZeroSetImplication}
  Let $\varphi(\bar x)$ and $\psi(\bar x)$ be two
  definable predicates, $X$ a type-definable set,
  and assume that for all
  $\bar x \in X$: $\varphi(\bar x) = 0 \Longrightarrow \psi(\bar x) = 0$.
  Then there exist a zero set
  $Y \supseteq X$ on which
  $\varphi(\bar x) = 0 \Longrightarrow \psi(\bar x) = 0$
  holds as well.
\end{lem}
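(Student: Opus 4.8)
The plan is to replace the single implication by a countable family of disjointness statements, handle each one with \fref{lem:LogNeighb}, and then take an intersection.

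First I would fix a parameter set $B$ containing the parameters over which $X$ is type-definable together with those occurring in $\varphi$ and $\psi$. For each $n < \omega$ I would consider the set
\[
  C_n = \{\bar x : \varphi(\bar x) = 0\} \cap \{\bar x : \psi(\bar x) \geq 2^{-n}\},
\]
which is a zero set over $B$: $\{\varphi(\bar x) = 0\}$ is the zero set of the definable predicate $|\varphi|$, $\{\psi(\bar x) \geq 2^{-n}\}$ is the zero set of the definable predicate $\bar x \mapsto \max(2^{-n}-\psi(\bar x),0)$, and a finite intersection of zero sets is a zero set. The hypothesis says exactly that $C_n \cap X = \emptyset$ for every $n$: if $\bar x \in X$ and $\varphi(\bar x) = 0$ then $\psi(\bar x) = 0 < 2^{-n}$, so $\bar x \notin C_n$, while if $\varphi(\bar x) \neq 0$ then trivially $\bar x \notin C_n$.

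Next I would apply \fref{lem:LogNeighb}(iii) to the pair consisting of $X$ (type-definable over $B$) and $C_n$, obtaining for each $n$ a zero set $Z_n \supseteq X$ defined over $B$ with $Z_n \cap C_n = \emptyset$; replacing $Z_n$ by $Z_0 \cap \dots \cap Z_n$ I may further assume $Z_0 \supseteq Z_1 \supseteq \dots$. Then I would set $Y = \bigcap_{n<\omega} Z_n$. Since $[Y] = \bigcap_n [Z_n]$ is a closed $G_\delta$ set, \fref{lem:ZeroSetGd} gives that $Y$ is a zero set, and obviously $X \subseteq Y$. To finish, suppose $\bar x \in Y$ and $\varphi(\bar x) = 0$; for every $n$ we have $\bar x \in Z_n$, hence $\bar x \notin C_n$, hence $\psi(\bar x) < 2^{-n}$ (using $\varphi(\bar x) = 0$), and therefore $\psi(\bar x) = 0$.

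I do not expect any real obstacle; the one point to watch is the parameter bookkeeping, but \fref{lem:LogNeighb} is stated in exactly the asymmetric form needed — the inner set over a prescribed base, the outer set allowed extra parameters — so it applies verbatim.
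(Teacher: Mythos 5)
Your proof is correct, but it takes a different route from the paper's. The paper argues directly: by compactness, for every $\varepsilon>0$ there is $\delta(\varepsilon)>0$ such that $\varphi(\bar x)<\delta(\varepsilon)\Longrightarrow\psi(\bar x)\leq\varepsilon$ on $X$, and then it writes down a single explicit definable predicate
$\chi(\bar x)=\sum_n 2^{-n-1}\bigl((\delta(2^{-n})\dotminus\varphi(\bar x))\wedge(\psi(\bar x)\dotminus 2^{-n})\bigr)$
whose zero set is the required $Y$. You instead decompose the implication into the countable family of disjointness statements $X\cap C_n=\emptyset$ with $C_n=\{\varphi=0\}\cap\{\psi\geq 2^{-n}\}$, separate each by \fref{lem:LogNeighb}(iii), and intersect, invoking the remark after \fref{lem:ZeroSetGd} that countable intersections of zero sets are zero sets. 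Both arguments are ultimately the same compactness/normality fact; yours is ``softer'' in that it reuses the separation machinery already established and avoids writing an explicit formula, while the paper's gives a concrete defining predicate for $Y$ (which can be convenient when one wants to track parameters or uniformity). Your parameter bookkeeping is fine: $C_n$ is a zero set over $B$, and \fref{lem:LogNeighb}(iii) returns a zero set over $B$, so $Y$ is a zero set over $B$ as well. The step of replacing $Z_n$ by $Z_0\cap\dots\cap Z_n$ is harmless but not needed.
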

\begin{proof}
  For every $\varepsilon > 0$ there is $\delta(\varepsilon) > 0$
  such that for all $\bar x \in X$:
  $\varphi(\bar x) < \delta(\varepsilon)
  \Longrightarrow \psi(\bar x) \leq \varepsilon$.
  Indeed, if not, then we can obtain a contradiction to our assumption
  using compactness.
  We can therefore take $Y$ to be the zero set of:
  \begin{gather*}
    \chi(\bar x) = \sum_{n<\omega} 2^{-n-1} \left(
      (\delta(2^{-n}) \dotminus \varphi(\bar x)) \wedge
      (\psi(\bar x) \dotminus 2^{-n})
    \right).
    \qedhere
  \end{gather*}
\end{proof}

Finally, when it comes to definable sets, there are several
important equivalent characterisations.
For a structure $\cM$, a subset $X \subseteq M^n$,
a definable predicate $\varphi(\bar x,\bar y)$ (possibly with
parameters in $M$) and $\bar b \in M^m$ define
\begin{gather*}
  \sup_{\bar x \in X} \varphi(\bar x,\bar b)
  =
  \sup\, \{ \varphi^\cM(\bar a,\bar b) \colon \bar a \in X\}.
  \qquad (\text{where } \sup \emptyset = 0)
\end{gather*}
Thus $\sup_{\bar x \in X} \varphi(\bar x,\bar y)$
is a $[0,1]$-valued predicate on $M^m$ which need not be definable.

\begin{fct}
  \label{fct:DefSet}
  Let $\cM$ be a structure, $X \subseteq M^n$ closed subset.
  Let also $A \subseteq M$ be a set of parameters.
  Then the following are equivalent:
  \begin{enumerate}
  \item $X$ is definable in $\cM$ over $A$.
  \item $X$ is the zero set in $\cM$ of an $A$-definable predicate
    $\psi(\bar x)$, and $d(\bar x,X) \leq \varphi(\bar x)$
    on $M^n$.
  \item \label{item:DefSetMetricNeighbourhood}
    For every $\varepsilon > 0$ there exists
    a formula $\psi_\varepsilon(\bar x)$ (with parameters in $A$)
    such that:
    \begin{gather*}
      X
      \subseteq
      \bigl\{ \bar a \in M^n\colon \psi_\varepsilon(\bar a) = 0 \bigr\}
      \subseteq
      \bigl\{ \bar a \in M^n\colon \psi_\varepsilon(\bar a) < 1 \bigr\}
      \subseteq
      B(X,\varepsilon).
    \end{gather*}
  \item For every $A$-definable predicate $\varphi(\bar x,\bar y)$,
    the predicate $\sup_{\bar x\in X} \varphi(\bar x,\bar y)$ is
    definable in $\cM$ over $A$.
  \item For every formula $\varphi(\bar x,\bar y)$,
    the predicate $\sup_{\bar x\in X} \varphi(\bar x,\bar y)$ is
    definable in $\cM$ over $A$.
    \setcounter{dummycnt}{\value{enumi}}
  \end{enumerate}
  If $\cM$ is $(|A|+\aleph_0)^+$-saturated and
  $X$ is type-definable in $\cM$ over $A$
  then these conditions are further equivalent to:
  \begin{enumerate}
    \setcounter{enumi}{\value{dummycnt}}
  \item For every $\varepsilon > 0$, the set
    $\bar B(X,\varepsilon)$
    (which is necessarily type-definable over $A$) is a logical
    neighbourhood of $X$.
  \end{enumerate}
\end{fct}
\begin{proof}
  \begin{cycprf}
  \item[\impnext]
    Take $\psi(\bar x) = d(\bar x,X)$.
  \item[\impnext]
    Take $\psi_\varepsilon
    = \dot k \psi = (k\psi)\wedge 1$,
    where $k > 1/\varepsilon$.
  \item[\impnext]
    It will be enough to show that
    $\sup_{\bar x \in X} \varphi(\bar x,\bar y)$
    admits arbitrarily good uniform approximations by $A$-definable
    predicates.

    Given $\varepsilon > 0$ there exist
    $\delta > 0$ and $k \in \bN$ such that
    \begin{gather*}
      d(\bar x,\bar x') < \delta
      \quad \Longrightarrow \quad
      |\varphi(\bar x,\bar y)-\varphi(\bar x',\bar y)|
      \leq \varepsilon.
    \end{gather*}
    Let
    $\zeta(\bar x,\bar y) =
    \varphi(\bar x,\bar y) \dotminus \psi_\delta(\bar x)$,
    and let us fix
    $\bar b \in M^m$.
    For $\bar a \in X$ we have
    $\zeta(\bar a,\bar b) = \varphi(\bar a,\bar b)$.
    For arbitrary $\bar a$ in $M^n$,
    if $\zeta(\bar a,\bar b) > 0$
    then necessarily $\psi_\delta(\bar a) < 1$,
    so there is $\bar a' \in X$, $d(\bar a,\bar a') < \delta$,
    whereby
    \begin{gather*}
      \zeta(\bar a,\bar b) \leq \varphi(\bar x,\bar y)
      \leq \sup_{\bar x \in X} \varphi(\bar x,\bar b) + \varepsilon.
    \end{gather*}
    We obtain the desired approximation
    \begin{gather*}
      \sup_{\bar x \in X} \varphi(\bar x,\bar b)
      \leq
      \sup_{\bar x} \zeta(\bar x,\bar y)
      \leq
      \sup_{\bar x \in X} \varphi(\bar x,\bar b) + \varepsilon.
    \end{gather*}
  \item[\impnext]
    Immediate.
  \item[\impfirst]
    $d(\bar x,X) = \inf_{\bar y \in X} d(\bar x,\bar y)$.
  \item[\eqnum{3}{6}]
    An easy application of Urysohn's Lemma
    (and of density of formulae among definable predicates)
    in $\tS_n(A)$.
  \end{cycprf}  
\end{proof}

If a definable predicate $\psi(\bar x)$
defines the distance to a definable set $X$
then $\psi$ is $1$-Lipschitz and
$\psi(\bar x) \geq d(\bar x,X)$,
where $X$ is necessarily the zero set of $\psi$.
It is also not difficult to see that these are sufficient conditions.
Now let $c$ denote the parameter for
the definable predicate $\psi(\bar x)$, which we may re-write as
$\psi(\bar x,c)$.
The $1$-Lipschitz condition is easily expressed as a property of $c$
in continuous logic by D1 below.
The condition that
$\psi(\bar x) \geq d(\bar x,X)$
can be written as
$
\forall\bar x\, \exists \bar y \,
\bigl(
\psi(\bar y,c) = 0
\, \& \,
d(\bar x,\bar y) \leq \psi(\bar x,c)
\bigr)
$,
which in continuous logic can only be expressed \emph{approximately},
as in D2.
\begin{align*}
  \tag{D1}
  & \sup_{\bar x,\bar y} \, \bigl(
  \psi(\bar x,z) \dotminus \psi(\bar y,z)
  \dotminus d(\bar x,\bar y) \bigr)
  = 0, \\
  \tag{D2}
  & \sup_{\bar x} \, \inf_{\bar y} \,
  \Bigl(
  \psi(\bar y,z) \vee \bigl(
  d(\bar x,\bar y) \dotminus
  \psi(\bar x,z)
  \bigr) \Bigr) = 0.
\end{align*}
It turns out that even though D2 is merely an approximate version of
what we wished to express, it is still enough.
(Compare with similar conditions given in
\cite[Theorem~9.12]{BenYaacov-Berenstein-Henson-Usvyatsov:NewtonMS}.)

\begin{prp}
  \label{prp:DefSetParam}
  Let $\Sigma_\psi(z)$ consist of conditions {\rm D1} and {\rm D2}.
  Let $\cM$ be any structure, $c \in M$
  (possibly in an imaginary sort -- for example the sort of canonical
  parameters for instances of $\psi$)
  and let $X_c \subseteq M^n$ be the zero set of
  $\psi(\bar x,c)$.
  Then $\psi(\bar x,c) = d(\bar x,X_c)$
  in $\cM$ if and only if
  $\cM \models \Sigma_\psi(c)$.

  Moreover, the quantification in \fref{fct:DefSet} uniform,
  meaning that for any definable predicate
  $\varphi(\bar x,\bar y,z)$ there is a definable predicate
  $\chi_{\psi,\varphi}(\bar y,z)$ (both without parameters),
  such that for every structure $\cM$ and every
  $c \in M$:
  \begin{gather*}
    \Sigma_\psi(c)
    \quad \Longrightarrow \quad
    \sup_{\bar x \in X_c} \, \varphi(\bar x,\bar y,c)
    =\chi_{\psi,\varphi}(\bar y,c).
  \end{gather*}
\end{prp}
\begin{proof}
  Left to right is clear, so we prove right to left.
  By D1,  $\psi(\bar x,c)$ is
  $1$-Lipschitz, so
  $\psi(\bar x,c) \leq d(\bar x,X_c)$.
  For the other inequality, let $\bar a \in M^n$ and
  let $\varepsilon > 0$.
  By D2 there exists $\bar a_0$ such that
  \begin{gather*}
    \psi(\bar a_0,c) < \varepsilon,
    \qquad
    d(\bar a,\bar a_0) < \psi(\bar a,c) + \varepsilon.
  \end{gather*}
  Proceeding by induction we construct a sequence
  $\{\bar a_n\}$ in $\cM$ verifying
  \begin{gather*}
    \psi(\bar a_{n+1},c) < 2^{-n-1}\varepsilon,
    \qquad
    d(\bar a_n,\bar a_{n+1})
    <
    \psi(\bar a_n,c) + 2^{-n-1}\varepsilon
    <
    3\cdot 2^{-n-1}\varepsilon
  \end{gather*}
  This sequence is Cauchy and converges in $\cM$ to some
  $\bar b$.
  Then $\psi(\bar b,c) = 0$ by continuity of $\psi(\bar x,c)$
  and
  \begin{gather*}
    d(\bar x,X_c)
    \leq
    d(\bar a,\bar b)
    <
    \psi(\bar a,c) + \varepsilon
    + 3\varepsilon \sum 2^{-n-1}
    <
    \psi(\bar a,c) + 4 \varepsilon.
  \end{gather*}
  Thus $d(\bar a,X_c) \leq \psi(\bar a,c)$, as desired.

  The moreover part is by inspection of the proof of
  \fref{fct:DefSet}.
\end{proof}

It follows definable sets, as well as quantification over definable
sets, are respected by elementary extensions and restrictions.

\begin{cor}
  Assume $A \subseteq \cM \preceq \cN$.
  If $X_1 \subseteq M^n$ is $A$-definable then there is a unique
  $A$-definable subset $X_2 \subseteq N^n$ such that
  $X_1 = X_2 \cap M^n$.
  Conversely, if
  $X_2 \subseteq N^n$ is $A$-definable in $\cN$ then
  $X_1 = X_2 \cap M^n$ is $A$-definable in $\cM$.

  Moreover, assume this is the case, and let
  $\varphi(\bar x,\bar y)$ be an $A$-definable predicate,
  so $\sup_{\bar x \in X_1} \varphi(\bar x,\bar y)$
  and  $\sup_{\bar x \in X_2} \varphi(\bar x,\bar y)$
  are $A$-definable in $\cM$ and in $\cN$, respectively.
  Then the latter is the unique interpretation in $\cN$
  of the former.
  In particular, $d(\bar x,X_2)$ is the unique interpretation of
  $d(\bar x,X_1)$ in $\cN$.
\end{cor}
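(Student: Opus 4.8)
The plan is to reduce everything to \fref{prp:DefSetParam}, using that the condition schema $\Sigma_\psi(z)$ transfers in \emph{both} directions along $\cM \preceq \cN$. First I would write $d(\bar x,X_1) = \psi(\bar x,c)$ for a parameter-free definable predicate $\psi$ and a parameter $c$; since $X_1$ is $A$-definable we may take $c \in \dcl(A)$, working in $\cM^{\mathrm{eq}}$ and letting $c$ be a canonical parameter for this instance of $\psi$ if necessary. By \fref{prp:DefSetParam}, $\cM \models \Sigma_\psi(c)$; as $c \in M$ and $\cM \preceq \cN$ this lifts to $\cN \models \Sigma_\psi(c)$. So, letting $X_2 \subseteq N^n$ be the zero set of $\psi(\bar x,c)$ in $\cN$, \fref{prp:DefSetParam} gives $\psi(\bar x,c) = d(\bar x,X_2)$ in $\cN$, whence $X_2$ is $A$-definable; and since $\psi^\cN$ restricts to $\psi^\cM$ on $M^n$, the zero set $X_2$ meets $M^n$ in exactly $X_1$.

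For uniqueness I would run the same argument downwards. Given another $A$-definable $X_2' \subseteq N^n$ with $X_2' \cap M^n = X_1$, write $d(\bar x,X_2') = \psi'(\bar x,c')$; then $\cN \models \Sigma_{\psi'}(c')$, hence $\cM \models \Sigma_{\psi'}(c')$, so by \fref{prp:DefSetParam} $\psi'(\bar x,c')$ computes in $\cM$ the distance to its zero set there, which is $X_2' \cap M^n = X_1$. Thus $\psi(\bar x,c)$ and $\psi'(\bar x,c')$ agree on $M^n$; the condition expressing this agreement (a $\sup$ condition over parameters in $M$) passes up to $\cN$, so they agree on $N^n$ as well and therefore have the same zero set, i.e.\ $X_2 = X_2'$. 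The converse direction of the corollary is literally the same computation read starting from $\cN$: given $A$-definable $X_2 \subseteq N^n$, write $d(\bar x,X_2) = \psi(\bar x,c)$, observe $\cN \models \Sigma_\psi(c)$ hence $\cM \models \Sigma_\psi(c)$, and conclude that $\psi(\bar x,c)$ defines in $\cM$ the distance to $X_2 \cap M^n$.

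For the ``moreover'' part I would first absorb all parameters into a single $c \in M$, so that $d(\bar x,X_1) = \psi(\bar x,c)$ and $\varphi(\bar x,\bar y) = \varphi_0(\bar x,\bar y,c)$ with $\psi,\varphi_0$ parameter-free, and then invoke the uniform clause of \fref{prp:DefSetParam} to get a parameter-free definable predicate $\chi_{\psi,\varphi_0}(\bar y,z)$ with $\sup_{\bar x \in X_c}\varphi_0(\bar x,\bar y,c) = \chi_{\psi,\varphi_0}(\bar y,c)$ in every model of $\Sigma_\psi(c)$. Interpreting this identity in $\cM$ and in $\cN$ identifies $\sup_{\bar x\in X_1}\varphi$ and $\sup_{\bar x\in X_2}\varphi$ with the two interpretations of the \emph{single} $A$-definable predicate $\chi_{\psi,\varphi_0}(\bar y,c)$, which are compatible with $\cM\preceq\cN$ by construction of definable predicates; that is exactly the claimed uniqueness of interpretation. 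The final sentence about $d(\bar x,X)$ is the even more immediate special case, using $\psi(\bar x,c) = d(\bar x,X_1)$ in $\cM$ and $= d(\bar x,X_2)$ in $\cN$ directly.

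I expect the one genuinely non-automatic step to be the uniqueness argument, which is the only place that is not a bare ``transfer upward'': there one must descend $\Sigma_{\psi'}$ to $\cM$, apply \fref{prp:DefSetParam} there to pin down the zero set, and only then lift the resulting equality of predicates back up to $\cN$. Everything else is bookkeeping about parameters together with the fact that a definable predicate, being a uniform limit of formulas, has a canonical interpretation compatible with elementary extensions and substructures.
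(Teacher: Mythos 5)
Your proposal is correct and follows essentially the same route as the paper, which proves this corollary in one line by appealing to \fref{prp:DefSetParam} (transfer of the conditions $\Sigma_\psi$ and of the uniform predicate $\chi_{\psi,\varphi}$ along $\cM \preceq \cN$) together with the fact that two $A$-definable predicates agreeing on $\cM$ agree on $\cN$. Your write-up simply makes explicit the bookkeeping that the paper leaves as ``immediate''.
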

\begin{proof}
  Immediate from \fref{prp:DefSetParam} and the fact that two
  $A$-definable predicates which agree on $\cM$, also agree on $\cN$.
\end{proof}

We may therefore refer to a definable set $X$
or to the definable predicates
$d(\bar x,X)$ and $\sup_{\bar x \in X} \varphi(\bar x,X)$
without specifying the ambient structure explicitly
(it just has to contain all the required parameters).
In the situation described above we may then write
$X_1 = X(\cM)$, $d(\bar x,X_1) = d(\bar x,X)^\cM$,
and so on.
It is also worthwhile to notice that every definable set $X$ admits an
imaginary \emph{canonical parameter}, or \emph{code}, namely
an imaginary element which is fixed precisely by those automorphisms
of a large homogeneous ambient structure which fix $X$ set-wise.
Indeed, the canonical parameter of $d(\bar x,X)$ will do.

\begin{lem}
  The product and union of two definable sets are definable.
\end{lem}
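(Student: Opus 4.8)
The plan is to express the distance to the union, respectively to the product, directly in terms of the distances to the two given sets. The two identities I will use are elementary facts about metric spaces: $d(\bar x, A\cup B) = d(\bar x,A)\wedge d(\bar x,B)$, and, for the product sort $M^{n+m}$ --- whichever of the usual product metrics one adopts, e.g.\ $d\bigl((\bar x,\bar y),(\bar x',\bar y')\bigr) = d(\bar x,\bar x')\vee d(\bar y,\bar y')$ --- the corresponding identity $d\bigl((\bar x,\bar y),A\times B\bigr) = d(\bar x,A)\vee d(\bar y,B)$ (with the truncated sum in place of $\vee$ if the sum metric is used instead), obtained by pushing the infimum through the two coordinates separately. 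Since the class of definable predicates is closed under the continuous connectives $\wedge$, $\vee$ and (truncated) addition, everything reduces to these observations.

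Concretely: if $X,Y\subseteq M^n$ are both definable over $A$, then $X\cup Y$ is closed and, by the first identity, $d(\bar x,X\cup Y) = d(\bar x,X)\wedge d(\bar x,Y)$ is an $A$-definable predicate; hence $X\cup Y$ is $A$-definable. Likewise, if $X\subseteq M^n$ and $Y\subseteq M^m$ are definable over $A$, then $X\times Y\subseteq M^{n+m}$ is closed and, by the second identity, $d\bigl((\bar x,\bar y),X\times Y\bigr)$ is the appropriate combination ($\vee$ or truncated $+$) of the $A$-definable predicates $d(\bar x,X)$ and $d(\bar y,Y)$, hence is itself $A$-definable; so $X\times Y$ is $A$-definable.

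I do not expect any real obstacle here: the content is just the two metric identities together with the stability of the notion of definable predicate under continuous connectives, all routine. If one prefers to bypass the explicit distance formulas, one can instead argue via characterisation \fref{item:DefSetMetricNeighbourhood} of \fref{fct:DefSet}: from formulas $\psi_\varepsilon,\psi'_\varepsilon$ witnessing the metric-neighbourhood approximation property for $X$ and for $Y$ one checks that $\psi_\varepsilon(\bar x)\wedge\psi'_\varepsilon(\bar x)$ does so for $X\cup Y$, and that $\psi_{\varepsilon}(\bar x)\vee\psi'_{\varepsilon}(\bar y)$ does so for $X\times Y$ (halving $\varepsilon$ if one works with the sum metric on the product sort), which again yields definability.
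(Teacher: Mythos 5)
Your proposal is correct and coincides with the paper's own argument: the paper likewise sets $d((x,y),X\times Y)=d(x,X)\vee d(y,Y)$ (with the maximum metric on the product sort) and $d(x,X\cup Y)=d(x,X)\wedge d(x,Y)$, then appeals to closure of definable predicates under the connectives. Nothing to add.
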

\begin{proof}
  If $X$ and $Y$ are definable then
  $d((x,y),X\times Y) = d(x,X) \vee d(y,Y)$, where we equip the product sort
  with the maximum metric.
  If they are in the same sort then
  $d(x,X\cup Y) = d(x,X) \wedge d(x,Y)$.
\end{proof}

A finite intersection of definable sets needs not be definable in
general.
When it comes to infinite unions, we propose two results.

\begin{lem}
  \label{lem:DefSetCover}
  Let $X$ be a type-definable set, and assume
  $X = \bigcup_{i<\alpha} X_i$ where $\{X_i\colon i < \alpha\}$
  is a possibly infinite (yet bounded) family of definable sets.
  Then $X$ is definable.
\end{lem}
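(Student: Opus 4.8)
The plan is to reduce, for each error $\varepsilon>0$, to a \emph{finite} subunion that approximates $X$ to within $\varepsilon$, and then to read off that $d(\bar x,X)$ is a uniform limit of definable predicates, hence a definable predicate.

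First I would fix a single parameter set $A$ over which $X$ is type-definable and every $X_i$ is definable, and work inside a monster model saturated over $A$. Since the family $\{X_i:i<\alpha\}$ is bounded — one may for instance take $A$ to consist of the canonical parameters (codes) of the $X_i$ together with the parameters of $X$ — such an $A$ has bounded size, so $[X]$ and all the $[X_i]$ may be regarded as subsets of the one compact space $\tS_n(A)$. Here $[X]=\bigcup_{i<\alpha}[X_i]$: the inclusion $\supseteq$ holds because $X_i\subseteq X$, and $\subseteq$ because every realisation of a type in $[X]$ lies in $X=\bigcup_i X_i$ and hence in some $X_i$. As $X$ is type-definable over $A$, the set $[X]$ is closed in $\tS_n(A)$, hence compact.

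Next, fixing $\varepsilon>0$, for each $i$ I would invoke the metric-neighbourhood characterisation in \fref{fct:DefSet} (applied to the definable set $X_i$): there is a formula $\psi_i(\bar x)$ over $A$ with $X_i\subseteq\{\psi_i=0\}\subseteq\{\psi_i<1\}\subseteq B(X_i,\varepsilon)$. Let $V_i\subseteq\tS_n(A)$ be the open set corresponding to the condition $\psi_i<1$, so $[X_i]\subseteq V_i$. Then $\{V_i:i<\alpha\}$ is an open cover of the compact set $[X]$, so some finite subfamily $V_{i_1},\dots,V_{i_k}$ already covers $[X]$. Put $X'=X_{i_1}\cup\cdots\cup X_{i_k}$; this is definable, since a finite union of definable sets is definable (cf.\ the preceding lemma), and $X'\subseteq X$. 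Conversely, if $\bar a\in X$ then $\tp(\bar a/A)\in V_{i_j}$ for some $j$, so $\psi_{i_j}(\bar a)<1$, whence $\bar a\in B(X_{i_j},\varepsilon)\subseteq B(X',\varepsilon)$; therefore $X\subseteq\bar B(X',\varepsilon)$.

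From $X'\subseteq X\subseteq\bar B(X',\varepsilon)$ one gets $d(\bar x,X)\leq d(\bar x,X')\leq d(\bar x,X)+\varepsilon$ everywhere (the second inequality because every point of $X$ is within $\varepsilon$ of $X'$). Applying this with $\varepsilon=2^{-n}$ and letting $X'_n$ be the resulting definable set, the definable predicates $d(\bar x,X'_n)$ converge uniformly to $d(\bar x,X)$, so $d(\bar x,X)$ is a definable predicate and $X$ is definable. The one step that genuinely needs attention is the first one: one must know that a single bounded parameter set simultaneously witnesses the type-definability of $X$ and the definability of all the $X_i$, so that everything lives in one compact type space — this is exactly where the boundedness hypothesis on the family enters. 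After that, the argument is just topological compactness together with the already-established closure of the class of definable sets under finite unions and under uniform limits of distance predicates.
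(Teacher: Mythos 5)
Your proof is correct, but it takes a different route from the paper's. The paper verifies the last criterion of \fref{fct:DefSet} (that $\bar B(X,\varepsilon)$ is a logical neighbourhood of $X$) in one step, with no compactness extraction: since each $X_i$ is definable, $[X_i]\subseteq[\bar B(X_i,\varepsilon/2)]^\circ\subseteq[\bar B(X,\varepsilon)]^\circ$, and taking the union over \emph{all} $i<\alpha$ (finiteness is never needed) gives $[X]\subseteq[\bar B(X,\varepsilon)]^\circ$. You instead use compactness of $[X]$ to extract, for each $\varepsilon$, a \emph{finite} subfamily whose union $X'$ is definable (by the preceding lemma on finite unions) and satisfies $X'\subseteq X\subseteq\bar B(X',\varepsilon)$, and then conclude by uniform approximation of $d(\bar x,X)$ by the predicates $d(\bar x,X'_n)$. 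Both arguments are sound and both lean on the same two facts — that everything lives in a single compact type space $\tS_n(A)$ over a bounded parameter set, and the characterisations in \fref{fct:DefSet} (you use the metric-neighbourhood item where the paper uses the logical-neighbourhood item). What your version costs is the extra machinery (finite unions, closure of definable predicates under uniform limits); what it buys is a slightly more informative conclusion, namely that $X$ is metrically approximated to within any $\varepsilon$ by a finite subunion of the $X_i$, which the paper's softer topological argument does not exhibit.
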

\begin{proof}
  By \fref{fct:DefSet} all we need to check is that for every $\varepsilon>0$
  the set $\bar B(X,\varepsilon)$ is a logical neighbourhood of $X$.
  Indeed:
  \begin{gather*}
    \bar B(X,\varepsilon) \supseteq B(X,\varepsilon)
    = \bigcup_{i<\alpha} B(X_i,\varepsilon)
    \supseteq \bigcup_{i<\alpha} \bar B(X_i,\varepsilon/2),
    \intertext{whereby:}
    [\bar B(X,\varepsilon)]^\circ
    \supseteq \bigcup_{i<\alpha} [\bar B(X_i,\varepsilon/2)]^\circ
    \supseteq \bigcup_{i<\alpha} [X_i] = [X].
    \qedhere
  \end{gather*}
\end{proof}

Similarly, a definable union of definable sets is definable:
\begin{lem}
  Let $X_{\bar a}$ be a family of uniformly definable sets with
  parameters in a definable set $Y$.
  That is to say that there is a definable predicate
  $\varphi(\bar x,\bar y)$ such that
  $d(\bar x,\bar X_{\bar a}) = \varphi(\bar x,\bar a)$ for every
  $\bar a \in Y$.
  Then $Z = \bigcup_{\bar a \in Y} X_{\bar a}$ is definable.
\end{lem}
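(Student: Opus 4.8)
The plan is to write the distance function to $Z$ down explicitly, recognise it as a definable predicate using that $Y$ is a definable set, and then check that $Z$ is closed so that the basic criterion of \fref{fct:DefSet} applies. We may assume $Y \neq \emptyset$ (otherwise $Z = \emptyset$ is definable outright), and we work inside a sufficiently saturated structure containing all the relevant parameters. Since $X_{\bar a} \subseteq Z$ for every $\bar a \in Y$, for any tuple $\bar x$ one has $d(\bar x, Z) = \inf_{\bar a \in Y} d(\bar x, X_{\bar a}) = \inf_{\bar a \in Y} \varphi(\bar x,\bar a)$; call this $[0,1]$-valued predicate $\psi(\bar x)$. The first step is to observe that $\psi$ is definable: writing $\neg t = 1 - t$, we have $\psi(\bar x) = \neg\, \sup_{\bar a \in Y} \neg\varphi(\bar x,\bar a)$, and $\neg\varphi$ is again a definable predicate, so definability of the definable set $Y$ together with \fref{fct:DefSet} (the clause asserting that the supremum over a definable set of a definable predicate is definable) shows that $\psi$ is a definable predicate, over the parameters of $\varphi$ together with those defining $Y$.

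The second step is to identify the zero set of $\psi$ with $Z$. One inclusion is immediate: if $\bar b \in Z$ then $\bar b \in X_{\bar a}$ for some $\bar a \in Y$, so $\psi(\bar b) \leq \varphi(\bar b,\bar a) = 0$. For the converse, suppose $\psi(\bar b) = 0$. Then the partial type $\{\bar y \in Y\} \cup \{\varphi(\bar b,\bar y) \leq 2^{-n} \colon n < \omega\}$ is finitely satisfiable, since $\inf_{\bar a \in Y}\varphi(\bar b,\bar a) = 0$ means that for each $n$ there is $\bar a \in Y$ with $\varphi(\bar b,\bar a) \leq 2^{-n}$; by saturation the type is realised by some $\bar a \in Y$, forcing $\varphi(\bar b,\bar a) = 0$, i.e. $\bar b \in X_{\bar a} \subseteq Z$. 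In particular $Z$, being the zero set of the continuous predicate $\psi$, is closed.

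The third and final step: $Z$ is closed, is the zero set of the definable predicate $\psi$, and satisfies $d(\bar x, Z) \leq \psi(\bar x)$ (with equality, in fact), so by \fref{fct:DefSet} (the characterisation of a definable set as a closed zero set of a definable predicate dominating its distance function) the set $Z$ is definable. I expect the only non-routine point to be the compactness/saturation argument in the second step showing that the infimum defining $\psi$ is attained inside $Y$: this is exactly what makes $Z$ itself — and not merely its closure $\overline{Z}$ — closed and hence definable. The rest is bookkeeping with the equivalent conditions in \fref{fct:DefSet}.
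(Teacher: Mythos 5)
Your proof is correct and follows essentially the same route as the paper: both identify $d(\bar x,Z)$ with $\inf_{\bar y\in Y}\varphi(\bar x,\bar y)$, which is definable because one may quantify over the definable set $Y$, and both dispose of closedness of $Z$ by a compactness/saturation argument. You merely spell out in full the saturation step and the appeal to the zero-set criterion of \fref{fct:DefSet} that the paper leaves implicit.
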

\begin{proof}
  First, $Z$ is a closed set by a simple compactness argument.
  Then we have:
  $d(\bar x,Z) = \inf_{\bar y \in Y} d(\bar x,X_{\bar y})
  = \inf_{\bar y \in Y} \varphi(\bar x, \bar y)$.
\end{proof}

\subsection{Partial definable predicates and functions}

We shall consider objects such as predicates and functions which are
only defined on some type-definable set.
\begin{dfn}
  Let $X$ be a type-definable set.
  \begin{enumerate}
  \item A \emph{partial type-definable predicate} $\psi(\bar x)$ on $X$,
    is given by a continuous mapping $\psi\colon [X] \to [0,1]$,
    where $[X]$ is the closed set of complete types corresponding to
    $X$.
    We call $X$ the \emph{domain} of $\psi$, denoted $\dom(\psi)$, and for
    $\bar a \in X$ we denote by $\psi(\bar a)$ the truth value
    $\psi(\tp(\bar a))$.
  \item 
    It is \emph{definable} on $X$ if it is the restriction of a
    definable predicate to $X$.
  \item 
    A \emph{partial type-definable function} $f(\bar x)$ on $X$
    is one whose graph is given by a partial type
    $\cG_f(\bar x,y)$.
    That is to say that
    $\cG_f(\bar x,y) \cup \cG_f(\bar x,z) \models y = z$ and that
    $\dom(f) = X$ is defined by the partial type
    $\exists y\,\cG_f(\bar x,y)$.
    With a slight abuse of notation we may write it as
    $f\colon X \to M$ or $f\colon X(M) \to M$, although the model $M$
    (and even its complete theory) may vary.
  \item 
    It is \emph{definable} on $X$ if the predicate
    $d(f(\bar x),y)$ is definable on $X \times M$.
  \end{enumerate}
  We may allow parameters in the definitions by naming them in the
  language.
  Also, when explicitly specifying the domain we shall usually drop the
  qualifier ``partial''.
\end{dfn}

It is a straightforward exercise to verify that every partial
definable predicate or function is type-definable.
We wish to prove the converse.

We start with a fact from general topology concerning the extensions
of continuous functions.
\begin{fct}[Tietze's Extension Theorem]
  Let $X$ be a normal space, $C \subseteq X$ closed, and let
  $f\colon C \to [0,1]$ be a continuous function.
  Then $f$ admits an extension to a continuous function
  $\tilde f\colon X \to [0,1]$.
\end{fct}

\begin{lem}
  \label{lem:DefFuncSubst}
  Let $\psi(x_{<n},y)$ be a definable predicate,
  and $f\colon X \to M$ a type-definable partial $n$-ary function.
  Then $\psi(\bar x,f(\bar x))$ is a partial type-definable predicate on
  $X$.
\end{lem}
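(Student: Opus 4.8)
The plan is to reduce the claim to a statement about continuous functions on compact Hausdorff spaces and then invoke Tietze's Extension Theorem, exactly as the placement of that fact in the text suggests. Concretely, I would work in the type space $\tS(A)$ for a suitable parameter set $A$ containing all parameters needed to define $\psi$ and $f$. The graph of $f$ is cut out by a partial type $\cG_f(\bar x, y)$, so the set $\Gamma = [\cG_f]$ is a closed (hence compact) subset of the type space $\tS_{n+1}(A)$ of $(\bar x, y)$-types, and the projection $\pi\colon \Gamma \to [X]$ onto the $\bar x$-coordinates is a continuous surjection. Because $\cG_f(\bar x,y) \cup \cG_f(\bar x,z) \models y = z$, this projection is moreover \emph{injective}: a point of $[X]$ has a unique preimage in $\Gamma$. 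A continuous bijection from a compact space to a Hausdorff space is a homeomorphism, so $\pi$ is a homeomorphism $\Gamma \cong [X]$, and I can regard ``evaluating $f$ and then substituting into $\psi$'' as a genuine continuous function on $[X]$.

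The key steps, in order, are as follows. First, fix $A$ and form $\Gamma = [\cG_f] \subseteq \tS_{n+1}(A)$ and the coordinate projection $\pi\colon \Gamma \to \tS_n(A)$, noting $\pi(\Gamma) = [X]$. Second, observe that $\pi\rest\Gamma$ is injective by the functionality clause in the definition of a type-definable function, hence (compactness + Hausdorff) a homeomorphism onto $[X]$; write $\sigma = (\pi\rest\Gamma)^{-1}\colon [X] \to \Gamma$ for its inverse. Third, the definable predicate $\psi(\bar x, y)$ induces a continuous function $\hat\psi\colon \tS_{n+1}(A) \to [0,1]$, $\hat\psi(\tp(\bar a, b)) = \psi(\bar a, b)$; restrict it to $\Gamma$ and precompose with $\sigma$ to get a continuous function $\theta = \hat\psi\circ\sigma\rest\Gamma\colon [X] \to [0,1]$. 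Fourth, unwind the definitions to check that $\theta(\tp(\bar a)) = \psi(\bar a, f(\bar a))$ for $\bar a \in X$: given $\bar a \in X$, its unique lift $\sigma(\tp(\bar a))$ is precisely $\tp(\bar a, f(\bar a))$, so $\theta(\tp(\bar a)) = \hat\psi(\tp(\bar a, f(\bar a))) = \psi(\bar a, f(\bar a))$. This exhibits $\psi(\bar x, f(\bar x))$ as a continuous $[0,1]$-valued function on $[X]$, which is exactly the definition of a partial type-definable predicate on $X$.

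The main obstacle — really the only non-bookkeeping point — is justifying that $\pi\rest\Gamma$ is a homeomorphism and, relatedly, that composing with it preserves \emph{continuity} rather than merely producing a well-defined function of types. The injectivity is immediate from $\cG_f(\bar x, y) \cup \cG_f(\bar x, z) \models y = z$, and surjectivity onto $[X]$ is the definition of $\dom(f)$; the only subtlety is the compact-to-Hausdorff argument that upgrades a continuous bijection to a homeomorphism, which is standard but worth stating since it is what makes $\sigma$ continuous and hence $\theta$ continuous. One should also remark that the construction is independent of the choice of $A$ (enlarging $A$ replaces everything by pullbacks along the restriction maps, which are continuous and compatible), so that $\psi(\bar x, f(\bar x))$ is well-defined as a partial type-definable predicate in the sense of the paper. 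Tietze's Theorem is not strictly needed for \emph{type}-definability here; it will matter in the downstream results about promoting such partial predicates to honest definable ones, which is presumably why it is recorded just above.
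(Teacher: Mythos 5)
Your proof is correct and follows essentially the same route as the paper: the paper's one-line argument asserts that $f$ induces a continuous map $\hat f\colon [X]\to \tS_{n+1}(T)$, $\tp(\bar a)\mapsto\tp(\bar a,f(\bar a))$, and takes the composition $\psi\circ\hat f$, which is exactly your $\hat\psi\circ\sigma$. The only difference is that you explicitly justify the continuity of this map via the compact-to-Hausdorff homeomorphism argument on the graph $[\cG_f]$, a detail the paper leaves implicit.
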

\begin{proof}
  Assume everything is defined without parameters,
  and let $K = [X] \subseteq \tS_n(T)$.
  Then $f$ induces a continuous function $\hat f\colon K \to \tS_{n+1}(T)$
  sending $\tp(\bar a) \mapsto \tp(\bar a,f(\bar a))$, and
  $\psi(\bar x,f(\bar x))$ is given by the composition
  $\psi \circ \hat f\colon K \to [0,1]$.
\end{proof}

\begin{prp}
  Every partial type-definable predicate or function is definable.
\end{prp}
\begin{proof}
  For predicates, this is just Tietze's Extension Theorem.
  For functions, assume that $f\colon X \to M$ is a type-definable function.
  Then $f'\colon X\times M \to M$ defined by $f'(\bar x,y) = f(\bar x)$ is
  type-definable as well.
  Let $\psi(\bar x,y,z) = d(y,z)$.
  Then $\psi(\bar x,y,f'(\bar x,y)) = d(y,f(\bar x))$ is a
  type-definable, and therefore definable,
  predicate on $X \times M$.
\end{proof}

This means that of the notions defined above we only need to retain
those of partial definable functions and predicates.
Moreover, while partial definable predicates are not entirely
superfluous, most of the time we shall avoid them, replacing any such
partial predicate with an (arbitrary) total definable predicate
extending it.
\begin{ntn}
  By the notation $\varphi(\bar x) \sqsupseteq \psi(\bar x)$ we mean
  that $\varphi(\bar x)$ is
  a definable predicate extending a partial definable predicate
  $\psi(\bar x)$.
\end{ntn}

\begin{rmk}
  One can add to the language a sort $S_{[0,1]}$ for the interval
  $[0,1]$, along with the tautological predicate
  $\id_{[0,1]}\colon S_{[0,1]} \to [0,1]$ (such a sort usually exists in
  $\cL^{eq}$).
  Then a partial predicate on a type-definable set $X$ is
  definable if and only if factors through a definable function to
  $S_{[0,1]}$.
\end{rmk}

If $Y \subseteq X$ are type-definable, we may say that $Y$ is definable
\emph{relative to $X$} if $d(x,Y)$ is a partial definable predicate on
$X$.
\begin{lem}
  If $X$ is definable and $Y \subseteq X$ is definable relative to $X$
  then $Y$ is definable.
\end{lem}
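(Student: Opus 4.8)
The statement to prove is: if $X$ is a definable set and $Y \subseteq X$ is definable relative to $X$ (meaning $d(x,Y)$ is a partial definable predicate on $X$), then $Y$ is definable. My plan is to apply the characterisation of definable sets in \fref{fct:DefSet}, specifically the metric-neighbourhood criterion \fref{item:DefSetMetricNeighbourhood}: it suffices to produce, for every $\varepsilon > 0$, a formula $\psi_\varepsilon(\bar x)$ with $X \subseteq \{\psi_\varepsilon = 0\} \subseteq \{\psi_\varepsilon < 1\} \subseteq B(Y,\varepsilon)$, except that here we work relative to $X$, so I must be careful that ``close to $X$ and small $\psi_\varepsilon$'' forces ``close to $Y$''.

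First I would extend the partial definable predicate $d(x,Y)$ on $X$ to a total definable predicate $\rho(x) \sqsupseteq d(x,Y)$ on the ambient structure, using the preceding proposition that every partial type-definable predicate is definable. Next, since $X$ is definable, $d(x,X)$ is a definable predicate; set $\chi(x) = \rho(x) \vee d(x,X)$, which is again a definable predicate, and whose zero set is precisely $Y$ (because on $X$ it equals $d(x,Y)$, and off $X$ it is strictly positive). It remains to show $\chi$ is (up to a Lipschitz-type comparison) the distance to its zero set, or more directly to verify criterion \fref{item:DefSetMetricNeighbourhood}. For that, fix $\varepsilon > 0$. The key compactness observation is: there is $\delta > 0$ such that any point with $d(x,X) < \delta$ lies within $\varepsilon/2$ of some point of $X$, and — using that $d(\cdot,Y)$ is a genuine (partial) definable predicate on $X$, hence uniformly continuous on $X$ — if moreover $\rho(x) < \delta$, then the nearby point of $X$ has $d(\cdot,Y) < \varepsilon/2$, so $x \in B(Y,\varepsilon)$. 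Then take $\psi_\varepsilon = (k\chi) \wedge 1$ for $k > 1/\delta$: its zero set contains $Y \supseteq$ (well, contains $Y$, and we need it to contain nothing too far), and $\{\psi_\varepsilon < 1\} \subseteq \{\chi < \delta\} \subseteq B(Y,\varepsilon)$ by the observation. Wait — I should double check the containment $\{\psi_\varepsilon = 0\} \supseteq$ what we need: criterion \fref{item:DefSetMetricNeighbourhood} wants $Y \subseteq \{\psi_\varepsilon = 0\}$, which holds since $\chi$ vanishes on $Y$; and $\{\psi_\varepsilon < 1\} \subseteq B(Y,\varepsilon)$, which is the containment just established. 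So \fref{fct:DefSet}\fref{item:DefSetMetricNeighbourhood} applies and $Y$ is definable.

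The main obstacle I anticipate is the uniform-continuity step: I need that a point near $X$ with small $\rho$-value is near a point of $X$ with small $d(\cdot,Y)$-value, and this requires knowing that $d(\cdot,Y)$, as a partial definable predicate, is uniformly continuous \emph{on $X$} (true, since definable predicates are uniformly continuous and restriction preserves this) and combining it with the fact that $X$ is metrically approximable by its logical neighbourhoods. Both ingredients are available — the former from general properties of definable predicates, the latter from \fref{fct:DefSet} applied to $X$ — so the argument is a matter of assembling a single compactness statement of the form: for all $\varepsilon$ there is $\delta$ such that $\bigl(d(x,X) < \delta \ \&\ \rho(x) < \delta\bigr) \Rightarrow d(x,Y) \leq \varepsilon$, proved by contradiction in a sufficiently saturated model exactly as in \fref{lem:ZeroSetImplication}. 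Once that is in hand, the choice of $\psi_\varepsilon$ is routine.
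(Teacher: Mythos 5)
Your argument is correct, but it takes a different route from the paper's. Both proofs begin the same way, by extending the partial predicate $d(x,Y)$ on $X$ to a total definable predicate $\varphi(x) \sqsupseteq d(x,Y)$. The paper then finishes in one line: since $X$ is definable one may quantify over it (\fref{fct:DefSet}), and the identity $d(x,Y) = \inf_{y \in X} \bigl( d(x,y) \dotplus \varphi(y) \bigr)$ (which holds for \emph{all} $x$ in the ambient sort, by the triangle inequality in one direction and by restricting the infimum to $y \in Y$ in the other) exhibits $d(x,Y)$ itself as a definable predicate. You instead form $\chi = \varphi \vee d(x,X)$ and verify the metric-neighbourhood criterion \fref{fct:DefSet}.\fref{item:DefSetMetricNeighbourhood} via the implication ``$d(x,X)<\delta$ and $\varphi(x)<\delta$ implies $d(x,Y)\leq\varepsilon$''. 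That implication is true, and in fact needs no compactness: it follows from the uniform continuity of the \emph{total} extension $\varphi$ (note that uniform continuity of $d(\cdot,Y)$ on $X$ alone would not suffice, since the point $x$ being tested lies off $X$; this is only a slip of phrasing on your part, as every definable predicate is uniformly continuous on the whole sort). The remaining bookkeeping (replacing the definable predicate $\chi$ by an actual formula $\psi_\varepsilon$, and checking that $Y$ is closed as the zero set of $\chi$) is routine. In short, the paper's proof buys brevity by producing the distance predicate in closed form from the quantification property of definable sets, whereas your proof buys nothing extra here but is a perfectly sound, if longer, application of the $\varepsilon$-neighbourhood characterisation.
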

\begin{proof}
  Let $\varphi(x) \sqsupseteq d(x,Y)$ (where $d(x,Y)$ is defined on $X$).
  Then for all $x$:
  $d(x,Y) = \inf_{y \in X} \big( d(x,y) \dotplus \varphi(y) \big)$.
\end{proof}

Since we do not always know how to extend a partial definable function
to a total one, it is worthwhile to notice the following fact:
\begin{lem}
  \label{lem:DefSetImage}
  Let $X$ be a definable set and let $f$ be a partial definable
  function whose domain contains $X$.
  Then $f(X)$ is definable as well.
\end{lem}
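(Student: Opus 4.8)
The plan is to produce a definable predicate computing the distance to $f(X)$. First I would note that $f(X)$ is closed: in a sufficiently saturated ambient structure $[f(X)]$ is the image of the compact set $[X]$ under the continuous map $\hat f$ furnished by \fref{lem:DefFuncSubst}, hence compact, hence closed (equivalently, closedness follows from a routine compactness argument, as in the earlier lemmas). Next, since $f$ is definable on its domain, the predicate $d(f(\bar x),y)$ is definable on $\dom(f)\times M$, and a fortiori its restriction to $X\times M$ is a partial definable predicate there; being partial definable it is the restriction of a \emph{total} definable predicate, which we may write as $\varphi(\bar x,y)\sqsupseteq d(f(\bar x),y)$.

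I then claim that $d(y,f(X))=\inf_{\bar x\in X}\varphi(\bar x,y)$. Indeed, for $\bar x\in X$ we have $\varphi(\bar x,y)=d(f(\bar x),y)$, so the right-hand side equals $\inf\{d(z,y)\colon z\in f(X)\}$, which is by definition $d(y,f(X))$; and this identity is valid in every model containing the relevant parameters, since for the definable set $X$ the predicate $\inf_{\bar x\in X}\varphi(\bar x,y)$ is interpreted as the genuine infimum over $X$, while $\varphi$ agrees with $d(f(\bar x),y)$ on $X$ in every model (as $f$ is an actual function on $X(M)$ there). Finally, because $X$ is definable, $\inf_{\bar x\in X}\varphi(\bar x,y)$ is a definable predicate by the infimum form of \fref{fct:DefSet} (i.e.\ $\inf_{\bar x\in X}\varphi=1-\sup_{\bar x\in X}(1-\varphi)$, using that definable predicates are closed under continuous connectives), exactly as in the proof of the preceding lemma on definable unions. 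Hence $d(y,f(X))$ is a definable predicate, and $f(X)$, being closed, is definable.

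I do not expect a serious obstacle here; the only point requiring care is that, although a partial definable function need not extend to a total definable function, the associated partial definable \emph{predicate} $d(f(\bar x),y)$ does extend (it is partial definable on $X\times M$, hence extends by Tietze), and that extension is all the argument uses.
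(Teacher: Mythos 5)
Your argument is exactly the paper's proof: choose $\varphi(\bar x,y)\sqsupseteq d(f(\bar x),y)$ and note that $d(y,f(X))=\inf_{\bar x\in X}\varphi(\bar x,y)$ is a definable predicate because quantification over the definable set $X$ preserves definability (\fref{fct:DefSet}). Your additional remarks --- that $f(X)$ is closed (in a sufficiently saturated model) and that the identity holds in every model --- are correct and merely make explicit what the paper leaves implicit.
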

\begin{proof}
  Let $\varphi(x,y) \sqsupseteq d(x,f(y))$.
  Then $d(x,f(X)) = \inf_{y\in X} \varphi(x,y)$, which is a definable predicate
  since $X$ is definable, whereby $f(X)$ is.
\end{proof}

It is usually fairly easy to reduce questions about arbitrary
type-definable sets to questions about zero sets.
For example, while it is not always possible to extend a partial
definable function
to a total one, one can always extend it to a zero set containing its
domain:
\begin{lem}
  \label{lem:PartFuncZeroSetExt}
  Let $X$ be a type-definable set, $f\colon X \to M$ a definable function on
  $X$.
  Then there is a zero set $Y \supseteq X$ such that $f$ extends to a
  definable function on $Y$.

  Moreover, for every choice of
  $\varphi(\bar x,y) \sqsupseteq d(f(\bar x),y)$
  there is a zero set $Y \supseteq X$ such that
  $\varphi\rest_{Y\times M}$ defines the graph of
  a partial definable function $f'\colon Y\to M$ (which extends $f$).
\end{lem}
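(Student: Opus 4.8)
The plan is to prove the ``moreover'' clause directly, since the first assertion follows from it at once: by definition of $f$ being definable on $X$, the partial predicate $d(f(\bar x),y)$ on $X\times M$ is the restriction of some total definable predicate $\varphi(\bar x,y)$, and applying the ``moreover'' clause to this $\varphi$ produces a zero set $Y\supseteq X$ on which $\varphi$ defines the graph of a definable function extending $f$. So first I would fix an arbitrary $\varphi(\bar x,y)\sqsupseteq d(f(\bar x),y)$, which we may take without parameters, and aim to cut down to a zero set $Y\supseteq X$ on which, uniformly in $\bar x$, the predicate $\varphi(\bar x,\cdot)$ ``is the distance to a single point''.

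The key point is an elementary characterisation of ``distance to a point'': a $[0,1]$-valued predicate $\theta(y)$ equals $d(a,\cdot)$ for some $a$ in the (saturated) ambient structure if and only if \emph{(i)} $\inf_y\theta(y)=0$, \emph{(ii)} $\theta(y)\le\theta(z)\dotplus d(z,y)$ for all $y,z$, and \emph{(iii)} $d(z,y)\le\theta(z)\dotplus\theta(y)$ for all $y,z$. Indeed, if $\theta(y_0)=0$ then \emph{(ii)} gives $\theta(y)\le d(y_0,y)$ and \emph{(iii)} gives $d(y_0,y)\le\theta(y)$, so $\theta=d(y_0,\cdot)$; the converse is just reflexivity and the triangle inequality. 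I would therefore introduce the definable predicates
\begin{gather*}
  \chi_0(\bar x)=\inf_y\varphi(\bar x,y),\\
  \chi_1(\bar x)=\sup_{y,z}\bigl(\varphi(\bar x,y)\dotminus(\varphi(\bar x,z)\dotplus d(z,y))\bigr),\\
  \chi_2(\bar x)=\sup_{y,z}\bigl(d(z,y)\dotminus(\varphi(\bar x,z)\dotplus\varphi(\bar x,y))\bigr),
\end{gather*}
each a genuine definable predicate because the quantifiers range over whole sorts (not over $X$), and each vanishing on $X$ since there $\varphi(\bar x,y)=d(f(\bar x),y)$. Then I would let $Y$ be their common zero set, i.e., the zero set of $\chi_0\vee\chi_1\vee\chi_2$; this is a zero set and contains $X$.

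It then remains to verify that $Y$ works. Fix $\bar x\in Y$ in the ambient structure. From $\chi_0(\bar x)=0$, saturation yields a witness $y_0$ with $\varphi(\bar x,y_0)=0$; from $\chi_1(\bar x)=\chi_2(\bar x)=0$, taking $z=y_0$ gives $\varphi(\bar x,y)=d(y_0,y)$ for every $y$. In particular $y_0$ is the unique zero of $\varphi(\bar x,\cdot)$, so I set $f'(\bar x)=y_0$; this defines $f'\colon Y\to M$ with $\varphi(\bar x,y)=d(f'(\bar x),y)$ on all of $Y\times M$. Since $\varphi$ is a total definable predicate, $f'$ is a definable function on $Y$ whose graph is defined precisely by $\varphi\rest_{Y\times M}$ and whose domain is $Y$ (each fibre being non-empty); and $f'$ extends $f$ because for $\bar x\in X$ we have $\varphi(\bar x,f(\bar x))=d(f(\bar x),f(\bar x))=0$, forcing $f'(\bar x)=f(\bar x)$. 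The one delicate step is passing from $\chi_0(\bar x)=0$ to an actual fibre witness $y_0$, which is exactly where saturation of the ambient structure is used — as is standard for type-definable objects; everything else is bookkeeping, and notably no Cauchy-sequence argument as in \fref{prp:DefSetParam} and no appeal to \fref{lem:ZeroSetImplication} is needed, since \emph{(ii)}--\emph{(iii)} pin $\varphi(\bar x,\cdot)$ down on the nose once the fibre is known to be non-empty.
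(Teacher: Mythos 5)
Your proof is correct and is essentially the paper's own argument: the three conditions you isolate (non-empty fibre plus the two triangle inequalities) are exactly the three disjuncts of the predicate $\psi(\bar x,y,z)$ the paper forms on $X\times M^2$, and the verification on the resulting zero set is identical. The only cosmetic difference is that you quantify $y,z$ out directly with $\sup$ (legitimate, since they range over whole sorts) where the paper instead invokes \fref{lem:ZeroSetByCoords} to pass from $X\times M^2$ to $Y\times M^2$.
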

\begin{proof}
  Let $\varphi(\bar x,y) \sqsupseteq d(f(\bar x),y)$, and let:
  \begin{gather*}
    \psi(\bar x,y,z) =
    \big(
    d(y,z) \dotminus \varphi(\bar x,y) \dotminus \varphi(\bar x,z)
    \big)
    \vee
    \big(
    \varphi(\bar x,y) \dotminus d(y,z) \dotminus \varphi(\bar x,z)
    \big)
    \vee
    \inf_t \varphi(\bar x,t)
  \end{gather*}
  Then $\psi$ is zero on $X\times M^2$, and by \fref{lem:ZeroSetByCoords}
  there is a zero set $Y \supseteq X$ such that $\psi$ is zero on
  $Y \times M^2$ as well.
  This means that for all $\bar x \in Y$ there is $y_0$ such that
  $\varphi(\bar x,y_0) = 0$, and that for any other $y$ one has
  $\varphi(\bar x,y) = d(y,y_0)$.
  Thus $\varphi(\bar x,y) = d(f'(\bar x),y)$ for some function
  $f'\colon Y \to M$ extending $f$.
\end{proof}

If the original type-definable set is closed under the
function(s) we wish to extend, we can make
sure that so is the extension:
\begin{prp}
  \label{prp:PartFuncClosedZeroSetExt}
  Assume that we are given:
  \begin{enumerate}
  \item For each $i < n$ sets $X_i \subseteq Y_i$ where $X_i$ is
    type-definable and $Y_i$ is a zero set.
  \item An ordinal $\alpha \leq \omega$, and for each
    $j < \alpha$ a partial definable
    function $f_j\colon \prod_{i<n} X_i^{m_{i,j}} \to X_{\ell_j}$.
  \end{enumerate}
  Then there are zero sets $Y_i \supseteq Z_i \supseteq X_i$
  and partial definable
  functions $g_j\colon \prod_{i<n} Z_i^{m_{i,j}} \to Z_{\ell_j}$
  extending $f_j$.

  Moreover, if we are given
  definable predicate
  $\varphi_j(\bar x_j,\bar y_j) \sqsupseteq d(f_j(\bar x_j),y_j)$ then we can
  arrange that $\varphi_j(\bar x_j,y_j) \sqsupseteq d(g_j(\bar x_j),y_j)$ as
  well.
\end{prp}
\begin{proof}
  For simplicity of notation we shall consider the special case of
  a single function $f\colon X^m \to X \subseteq M$, as the
  general case is identical (but with a lot more indexes).

  Let $\varphi(\bar x,y) \sqsupseteq d(f(\bar x),y)$ be given (or just
  choose one).
  By \fref{lem:PartFuncZeroSetExt}, and possibly replacing
  the given zero set $Y \supseteq X$ with a
  smaller zero set, we may assume there is a partial definable
  function $g\colon Y^m \to M$ such that
  $\varphi(\bar x,y) \sqsupseteq g(f(\bar x),y)$.

  Let $Y_0 = Y$.
  Given a zero set $Y_k$ satisfying $Y \supseteq Y_k \supseteq X$ let
  $W_k = g^{-1}(Y_k)\cap Y_k^m$.
  Then $W_k \supseteq X^m$ is a zero set and by
  \fref{lem:ZeroSetByCoords} we can find a zero set
  $Y_{k+1} \supseteq X$ such that $W \supseteq Y_{k+1}^m$, i.e., such that
  $Y_{k+1} \subseteq Y_k$ and $g(Y_{k+1}^m) \subseteq Y_k$.

  In this manner we construct a countable decreasing sequence of zero
  sets
  $Y = Y_0 \supseteq Y_1 \supseteq \ldots \supseteq Y_k
  \supseteq \ldots \supseteq X$
  such that $g(Y_{k+1}^m) \subseteq Y_k$.
  Then $Z = \bigcap_k Y_k$ is a zero set,
  $Y \supseteq Z \supseteq X$ and $g(Z^m) \subseteq Z$,
  as desired.
\end{proof}

On the other hand,
in later section we shall have to consider logical neighbourhoods of
domains of partial definable functions (specifically: logical
neighbourhoods of type-definable groups, on which the group law is a
partial definable function).
While the function does not necessarily extend as such, we can extend
it as a multi-valued function, which is in addition approximately
well-defined on small enough neighbourhoods of the original domain.
\begin{lem}
  \label{lem:ApproxPartFuncNbhd}
  Let $X$ be a type-definable set, $f\colon X \to M$ a partial definable
  function.
  Then there is a definable predicate
  $\varphi_f(\bar x,y) \sqsupseteq d(f(\bar x),y)$ satisfying in addition
  $\sup_{\bar x} \inf_y \varphi_f(\bar x,y) = 0$.
  Letting $\tilde f(\bar x) = \{y\colon \varphi_f(\bar x,y) = 0\}$
  we have:
  \begin{enumerate}
  \item For all $\bar x$:
    $\tilde f(\bar x) \neq \emptyset$;
  \item If $\bar x \in X$ then
    $\tilde f(\bar x) = \{f(\bar x)\}$;
  \item For every $\varepsilon > 0$ there is a logical neighbourhood $Y > X$
    such that for all $\bar x \in Y$:
    $\diam(\tilde f(\bar x)) \leq \varepsilon$.
  \end{enumerate}
\end{lem}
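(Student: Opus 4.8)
The plan is to start from an arbitrary choice $\varphi_0(\bar x,y) \sqsupseteq d(f(\bar x),y)$ and then "clamp'' it to force the desired extra property while controlling its behaviour off the type-definable domain $X$. First I would note that since $f$ is a genuine function on $X$, for every $\bar x \in X$ the predicate $\varphi_0(\bar x,\cdot)$ has a (unique) zero, namely $f(\bar x)$; hence the partial predicate $\inf_y \varphi_0(\bar x,y)$ vanishes on $X$. By \fref{lem:ZeroSetImplication} (applied to the trivial implication $0 = 0 \Rightarrow \inf_y\varphi_0(\bar x,y) = 0$, or directly by compactness) there is for each $\varepsilon > 0$ a logical neighbourhood of $X$ on which $\inf_y \varphi_0(\bar x,y) < \varepsilon$; more to the point, for each $\varepsilon$ there is $\delta(\varepsilon) > 0$ such that $d(\bar x, X) < \delta(\varepsilon)$ implies $\inf_y \varphi_0(\bar x,y) \leq \varepsilon$. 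The trouble is that on points far from $X$ nothing forces $\varphi_0(\bar x,\cdot)$ to have a zero at all.

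To fix this I would replace $\varphi_0$ by
\begin{gather*}
  \varphi_f(\bar x,y) = \varphi_0(\bar x,y) \dotminus \inf_t \varphi_0(\bar x,t).
\end{gather*}
Then $\inf_y \varphi_f(\bar x,y) = 0$ identically, so $\tilde f(\bar x) \neq \emptyset$ for all $\bar x$, giving item (i); and since $\inf_t\varphi_0(\bar x,t) = 0$ for $\bar x \in X$ we have $\varphi_f\rest_{X\times M} = \varphi_0\rest_{X\times M} = d(f(\bar x),\cdot)$, so $\varphi_f$ still lies above $d(f(\bar x),y)$ on the domain and $\tilde f(\bar x) = \{f(\bar x)\}$ for $\bar x \in X$, which is item (ii). (Strictly, one must also check $\varphi_f \sqsupseteq d(f(\bar x),y)$ as partial definable predicates, i.e.\ that it restricts correctly to $X\times M$, which is exactly the previous computation; the fact that the displayed formula is a definable predicate is clear since $\inf_t$ of a definable predicate is one.)

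For item (iii), the key point is that if $y_1, y_2 \in \tilde f(\bar x)$ then $\varphi_0(\bar x,y_i) = \inf_t \varphi_0(\bar x,t) =: s$, and picking $\bar a \in X$ with $d(\bar x,\bar a)$ small we can compare: using the uniform continuity of $\varphi_0$ in $\bar x$ together with the fact that $\varphi_0(\bar a,y) = d(f(\bar a),y) \geq d(y_1,y_2) - d(f(\bar a),y_1)$ and $\varphi_0(\bar a,y_1) \leq s + (\text{continuity error})$, one bounds $d(y_1,y_2)$ in terms of $s$ and the modulus of continuity. Since $s = \inf_t\varphi_0(\bar x,t) \to 0$ as $\bar x \to X$ (by the compactness observation above), for each $\varepsilon$ there is $\delta$ with $d(\bar x,X) < \delta \Rightarrow \diam(\tilde f(\bar x)) \leq \varepsilon$; and $\{\bar x : d(\bar x,X) < \delta\}$ contains a logical neighbourhood $Y > X$ (indeed $\bar B(X,\delta') < \{d(\bar x,X) < \delta\}$ for $\delta' < \delta$, or invoke \fref{fct:DefSet}\ref{item:DefSetMetricNeighbourhood}-style reasoning; since $X$ is only type-definable here one uses a zero-set logical neighbourhood from \fref{lem:LogNeighb}). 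I expect this last diameter estimate — tracking the modulus of continuity of $\varphi_0$ against the residual $s$ to get a clean bound on $\diam(\tilde f(\bar x))$ — to be the only genuinely fiddly step; everything else is formal manipulation of $\dotminus$ and $\inf$.
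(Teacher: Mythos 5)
Your construction of $\varphi_f$ and your treatment of items (i) and (ii) coincide with the paper's proof: one takes any $\varphi_{f,0}\sqsupseteq d(f(\bar x),y)$ and subtracts $\inf_z\varphi_{f,0}(\bar x,z)$ (your $\dotminus$ and the paper's ordinary $-$ agree here since the subtrahend is the infimum). That part is fine.

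The proof of item (iii), however, has a genuine gap. You establish the diameter bound $\diam(\tilde f(\bar x))\leq\varepsilon$ on a \emph{metric} neighbourhood $\{\bar x: d(\bar x,X)<\delta\}$ of $X$, and then assert that this set contains a \emph{logical} neighbourhood $Y>X$, e.g.\ that $\bar B(X,\delta')$ is a logical neighbourhood of $X$. For a merely type-definable $X$ this is false in general: by the last equivalence of \fref{fct:DefSet}, the statement ``$\bar B(X,\delta)$ is a logical neighbourhood of $X$ for every $\delta>0$'' is precisely the \emph{definability} of $X$, which is not assumed (and is, for groups, the whole content of the main theorem). \fref{lem:LogNeighb} does not rescue this: it produces logical neighbourhoods separating $X$ from type-definable sets disjoint from it, not logical neighbourhoods inside metric balls. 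The correct argument bypasses the metric entirely and is a direct compactness argument in the type space: the partial type $\{\bar x\in X\}\cup\{y,z\in\tilde f(\bar x)\}\cup\{d(y,z)\geq\varepsilon\}$ is inconsistent by item (ii), so by compactness finitely many of the conditions defining $X$, suitably relaxed, already make it inconsistent, and those relaxed conditions define a logical neighbourhood $Y>X$ on which $\diam(\tilde f(\bar x))\leq\varepsilon$. (Equivalently: the set of types of tuples $\bar x$ admitting $y,z\in\tilde f(\bar x)$ with $d(y,z)\geq\varepsilon$ is closed and disjoint from $[X]$, so \fref{lem:LogNeighb} applies.) Your continuity estimate, while correct as far as it goes, proves a statement about metric proximity that is strictly weaker than what the lemma asserts and is not what is used later (e.g.\ in \fref{lem:ApproxProdNbhd}).
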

\begin{proof}
  First choose any $\varphi_{f,0}(\bar x,y) \sqsupseteq d(f(\bar x),y)$.
  Then define:
  \begin{gather*}
    \varphi_f(\bar x,y)
    = \varphi_{f,0}(\bar x,y) - \inf_z \varphi_{f,0}(\bar x,z).
  \end{gather*}
  As $\bar x \in X$ implies
  $\inf_z \varphi_{f,0}(\bar x,z) = 0$ we still have
  $\varphi_f(\bar x,y) \sqsupseteq d(f(\bar x),y)$ whence the second item.
  On the other hand
  $\sup_{\bar x} \inf_y \varphi_f(\bar x,y) = 0$ follows from the
  definition and implies the first item.

  Finally, let $\Sigma(\bar x,y,z)$ denote the partial type saying
  that
  $\{y,z \in \tilde f(\bar x)\} \cup \{d(y,z) \geq \varepsilon\}$.
  Then $\Sigma \cup \{\bar x \in X\}$
  is inconsistent by the second item, and by compactness
  $\Sigma \cup \{\bar x \in Y\}$
  is inconsistent for some
  logical neighbourhood $Y > X$.
\end{proof}

In classical logic every partial definable function on a
type-definable set $X$ can be extended to a definable set containing
$X$.
Whether this is true in continuous logic is still open, but we can
show this up to an extension of the sort on which the function acts
via an isometric embedding in a larger sort.

\begin{lem}
  \label{lem:PartFuncRangeExt}
  Let $S_1$ and $S_2$ be two (imaginary) sorts, $X \subseteq S_1$
  type-definable, and $f\colon X \to S_2$ a partial definable function
  (usually we would have $S_1 = M^n$ and $S_2 = M^m$ being powers
  of the home sort).
  Then:
  \begin{enumerate}
  \item We can find $\varphi(x,y) \sqsupseteq d(f(x),y)$ satisfying:
    \begin{gather}
      \label{eq:DistFunc}
      \varphi(x,y) - \varphi(x,y')
      \leq d(y,y')
      \leq \varphi(x,y) + \varphi(x,y').
    \end{gather}
  \item There exists an imaginary sort $S_3$, a definable isometric
    embedding
    $\theta\colon S_2 \hookrightarrow S_3$,
    and a total definable function
    $\hat f\colon S_1 \to S_3$ extending $\theta\circ f$,
    such that moreover
    $\varphi(x,y) = d(\hat f(x),\theta(y))$.
  \end{enumerate}
\end{lem}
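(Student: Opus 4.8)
The plan is to build $S_3$ as a quotient of the disjoint union of $S_1$ and $S_2$, where we glue each point $x\in S_1$ to the ``formal value'' $f(x)$ by declaring the distance between $x$ and a point $y\in S_2$ to be exactly $\varphi(x,y)$, with $\varphi$ as in item (i). So the first task is item (i): to produce a definable predicate $\varphi(x,y)\sqsupseteq d(f(x),y)$ satisfying \eqref{eq:DistFunc}. For $x\in X$ the distance function $y\mapsto d(f(x),y)$ trivially satisfies the two inequalities of \eqref{eq:DistFunc} (they are just the triangle inequality and its reverse for the point $f(x)$), so these hold on $X\times S_2\times S_2$. Starting from any $\varphi_0(x,y)\sqsupseteq d(f(x),y)$ one symmetrises: the inequalities fail only on the complement of a type-definable set, so one can correct $\varphi_0$ to a genuine definable predicate satisfying \eqref{eq:DistFunc} everywhere. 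Concretely I would take something like
\begin{gather*}
  \varphi(x,y) = \inf_{y'} \bigl( \varphi_0(x,y') \dotplus d(y,y') \bigr)
  \;\vee\; \bigl( d(\cdot,\cdot)\text{-type correction} \bigr),
\end{gather*}
adjusting so that both the sub-additivity over $S_2$ and the lower bound $d(y,y')\le \varphi(x,y)+\varphi(x,y')$ hold; the point is that $\inf_{y'}(\varphi_0(x,y')\dotplus d(y,y'))$ already gives the right-hand inequality and agrees with $d(f(x),y)$ on $X$, and a further truncation gives the left-hand one.

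Granting (i), I would define on the set $S_1\sqcup S_2$ the following symmetric function $\rho$: on $S_2\times S_2$ put $\rho=d$; on $S_1\times S_2$ (and its transpose) put $\rho(x,y)=\varphi(x,y)$; on $S_1\times S_1$ put
\begin{gather*}
  \rho(x,x') = \inf_y \bigl( \varphi(x,y) + \varphi(x',y) \bigr).
\end{gather*}
Using the two inequalities in \eqref{eq:DistFunc} one checks the triangle inequality for $\rho$ in all configurations: the $S_2$-$S_2$ part is clear, the mixed cases reduce to the inequalities of \eqref{eq:DistFunc}, and the $S_1$-$S_1$ case follows from the definition of $\rho$ on $S_1\times S_1$ together with sub-additivity of $\varphi$ in the $S_2$-argument. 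Then $\rho$ is a pseudometric; let $S_3$ be the metric quotient (completed). The inclusion $S_2\hookrightarrow S_3$ is isometric by construction, and is definable. Define $\hat f\colon S_1\to S_3$ by sending $x$ to the class of $x$; this is a total function, and $d(\hat f(x),\theta(y)) = \rho(x,y) = \varphi(x,y)$ for all $x\in S_1$, $y\in S_2$, as required. For $x\in X$ we have $\varphi(x,y)=d(f(x),y)$, so the class of $x$ equals the class of $f(x)$, i.e. $\hat f(x)=\theta(f(x))$; hence $\hat f$ extends $\theta\circ f$.

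What remains, and what I expect to be the genuinely delicate point, is verifying that $S_3$ together with $\theta$ and $\hat f$ is actually an \emph{imaginary sort} of the structure (equivalently, interpretable) and that $\hat f$ and $\theta$ are \emph{definable} in the technical sense of the paper, not merely abstract functions. This amounts to exhibiting $S_3$ as (a completion of) a quotient of an already-available sort $S_1\sqcup S_2$ by a definable pseudometric $\rho$ — and here the key input is precisely that $\rho$ is built from $d$ and the definable predicate $\varphi$ by the explicit formulas above, so $\rho$ is a definable predicate on $(S_1\sqcup S_2)^2$; quotienting a sort by a definable pseudometric is a legitimate operation producing an imaginary sort, and the quotient maps are then definable by fiat. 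The function $\hat f$ is the composition of the (definable) coordinate inclusion $S_1\hookrightarrow S_1\sqcup S_2$ with the (definable) quotient map, hence definable; likewise $\theta$. So modulo the bookkeeping of checking that $S_1\sqcup S_2$ and its quotient by a definable pseudometric live in $\cL^{eq}$, the argument is complete; the triangle-inequality verification in the $S_1$-$S_1$ case is the only place where \eqref{eq:DistFunc} is used in both directions and is the computational heart of the proof.
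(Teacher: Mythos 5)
Your overall strategy --- realise $\hat f(x)$ as an abstract point at prescribed distances $\varphi(x,\cdot)$ from $S_2$ and glue --- is the same idea as the paper's, which implements it as the sort of canonical parameters of $\psi(z,xyt)=\chi(t)\wedge\varphi_0(x,z)+(\neg\chi(t))\wedge d(y,z)$, i.e.\ a Kuratowski-style embedding of $S_1\sqcup S_2$ into a function-space sort over $S_2$. But as written there are two genuine gaps. First, item (i) is never actually proved: you leave $\varphi$ as ``something like'' an inf-convolution plus an unspecified correction, and the one concrete claim is backwards --- $\inf_{y'}(\varphi_0(x,y')+d(y,y'))$ yields the $1$-Lipschitz (left-hand) inequality of \eqref{eq:DistFunc}, not the right-hand bound $d(y,y')\le\varphi(x,y)+\varphi(x,y')$, which fails for it whenever $\varphi_0(x,\cdot)$ is small off $X$ (e.g.\ identically $0$). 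Since every triangle-inequality case in item (ii) uses both inequalities, this is not cosmetic. The paper's single formula $\varphi(x,y)=\sup_z\,|\varphi_0(x,z)-d(z,y)|$ does the whole job: taking $z=y$ gives $\varphi\ge\varphi_0$, whence both inequalities, and on $X$ it collapses back to $d(f(x),y)$.

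Second, your distance on $S_1\times S_1$, namely $\rho(x,x')=\inf_y\bigl(\varphi(x,y)+\varphi(x',y)\bigr)$, is not a pseudometric in general: $\rho(x,x)=2\inf_y\varphi(x,y)$, and nothing forces $\inf_y\varphi(x,y)=0$ for $x\notin X$. Redefining $\rho$ to vanish on the diagonal destroys continuity, hence definability, so the quotient would no longer be an imaginary sort. The correct choice is $\rho(x,x')=\sup_y|\varphi(x,y)-\varphi(x',y)|$, which is reflexive and still satisfies all triangle inequalities via \eqref{eq:DistFunc}; this is exactly the metric the paper's canonical-parameter sort carries. Finally, the ``bookkeeping'' you defer --- realising $S_1\sqcup S_2$ and its metric quotient as an imaginary sort with definable $\theta$ and $\hat f$ --- is precisely where the paper does its concrete work (the tag formula $\chi(t)$ taking both truth values $0$ and $1$), since continuous logic has no off-the-shelf disjoint union of sorts; it should not be waved away.
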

\begin{proof}
  Choose $\varphi_0(x,y) \sqsupseteq d(f(x),y)$
  and let $\varphi(x,y) = \sup_z |\varphi_0(x,z)-d(z,y)|$.
  Notice that for $x \in X$ we have
  $\varphi(x,y) = \sup_z |d(f(x),z)-d(z,y)| = d(f(x),y)$, i.e.,
  $\varphi(x,y) \sqsupseteq d(f(x),y)$ as well.

  Let $\chi(t)$ be a formula for which $0$ and $1$ are possible truth
  values, and let
  \begin{gather*}
    \psi(z,xyt) = \chi(t)\wedge \varphi_0(x,z) + (\neg\chi(t))\wedge d(y,z).
  \end{gather*}
  Let $S_3 = \{[xyt]\colon x,t \in S_1, y \in S_2\}$ be the sort of
  canonical parameters of instances of $\psi(z,xyt)$.
  We recall that the metric on it is given by:
  $d([xyt],[x'y't'])
  = \sup_z |\psi(z,xyt)-\psi(z,x'y't')|$.

  The embedding $\theta\colon S_2 \hookrightarrow S_3$ is given by
  $y \mapsto [x_0y\tilde 0]$ where
  $\chi(\tilde 0) = 0$ and $x_0 \in S_1$ is arbitrary.
  This is indeed isometric and does not depend on the choice of either
  $\tilde 0$ or $x_0$ as
  $\psi(z,x_0y\tilde 0) = d(y,z)$.
  Thus $d(\theta(y),\theta(y')) = \sup_z |d(y,z)-d(y',z)| = d(y,z)$.

  Similarly we define $\hat f\colon S_1 \to S_3$ by
  $x \mapsto [xy_0\tilde 1]$ where $\chi(\tilde 1) = 1$,
  obtaining $\psi(z,xy_0\tilde 1) = \varphi_0(x,z)$.
  In particular for $x \in S_1$ and $y \in S_2$ we have
  $d(\hat f(x),\theta(y)) = \varphi(x,y)$ as desired, from which
  follows \fref{eq:DistFunc}.

  Finally, if $x \in X$ then
  $d(\theta\circ f(x),\theta(y))
  = d(f(x),y)
  = \varphi(x,y)
  = d(\hat f(x),\theta(y))$, so
  $\hat f$ extends $\theta\circ f$ and the proof is complete.
\end{proof}

\begin{qst}
  Given a type-definable set $X$ and a partial definable function
  $f\colon X \to M$, can one find a definable set $Y \supseteq X$ such that $f$
  extends to a definable $\hat f\colon Y \to M$?
  This is true in classical logic.
\end{qst}

\section{The main theorem (first approximation)}
\label{sec:MainThmApprox}

Once we have defined (type-)definable sets and functions we
automatically have corresponding notions for more complex algebraic
structures.
For example:

\begin{dfn}
  By a \emph{type-definable group} we mean a type-definable set $G$
  equipped with a definable function $\cdot \colon G^2 \to G$ defining a group
  law on $G$.

  It is \emph{definable} if the set $G$ is definable.
\end{dfn}

Notice that if $\langle G,\cdot\rangle$
is a type-definable group with parameters in
$A$ then its identity $e_G$ belongs to $\dcl(A)$: indeed, every
automorphism of the universal domain fixing $A$ would fix $e_G$.
Similarly, the function $x \mapsto x^{-1}$ is definable on $G$ with
parameters in $A$, its graph
given by $\cG_{x^{-1}}(x,y) = \cG_\cdot(x,y,e_G)$.

The main goal of this article is to give some sufficient conditions
under which type-definable groups are definable.
For example, we have already essentially proved:
\begin{prp}
  \label{prp:BddIdxDefGrp}
  Let $G$ be a type-definable group, $H \leq G$ a subgroup of bounded
  index, and assume $H$ is definable.
  Then so is $G$.
\end{prp}
\begin{proof}
  Say $G/H = \{g_iH\colon i < \lambda\}$.
  Each coset $g_iH$ is definable as the image of $H$ under the partial
  definable function $x \mapsto g_ix$ (\fref{lem:DefSetImage}).
  Then by \fref{lem:DefSetCover} $G = \bigcup_{i<\lambda} g_iH$ implies that
  $G$ is definable.
\end{proof}

\subsection{Invariant metrics}

The main theorem states that in an $\aleph_0$-stable theory, every
type-definable group is definable.
As a first approximation, we prove this under the assumption that
the metric is invariant under the group operation:

\begin{dfn}
  A metric defined on a type-definable group is \emph{left-invariant}
  (\emph{right-invariant}) if it
  is invariant under left (right) translation.
  It is \emph{invariant} if it is both left- and right-invariant.
  It is \emph{inverse-invariant} if it is invariant under
  $x \mapsto x^{-1}$.
\end{dfn}
Clearly if the metric is left-invariant (or right-invariant) and
inverse-invariant then it is right-invariant (or left-invariant) and
therefore invariant.
Conversely, if the metric is invariant then it is in particular
inverse-invariant, as we have:
$d(x^{-1},y^{-1}) = d(x^{-1}y,e) = d(y,x) = d(x,y)$.

In classical first order logic it is an easy consequence of
compactness that on every type-definable group, the group law and
inverse can be extended to be well-defined (and make some sense) on
some definable set containing the group, i.e., on some
logical neighbourhood of the group.
In the continuous sense things are trickier, and the best we can hope
for is a logical neighbourhood on which an \emph{approximate} product
is \emph{approximately} well-defined and (in case the metric
on the group is invariant) approximately isometric.

\begin{lem}
  \label{lem:ApproxProdNbhd}
  Let $G$ be a type-definable group on which the metric is invariant.
  Let $\varphi_\cdot(x,y,z) \sqsupseteq d(xy,z)$
  be as in \fref{lem:ApproxPartFuncNbhd},
  $x \mathop{\tilde\cdot} y = \{z\colon \varphi_.(x,y,z) = 0\}$.

  Then for every $\varepsilon > 0$ and logical neighbourhood $X > G$ there is an
  intermediate logical
  neighbourhood $X > Y > G$ such that
  $Y \mathop{\tilde\cdot} Y \subseteq X$,
  and multiplication is almost isometric in the
  sense that for all $x,y,y' \in Y$ and for all
  $z \in x \mathop{\tilde\cdot} y$,
  $z' \in x \mathop{\tilde\cdot} y'$
  (or $z \in y \mathop{\tilde\cdot} x$,
  $z' \in y' \mathop{\tilde\cdot} x$):
  $|d(y,y') - d(z,z')| \leq \varepsilon$.
\end{lem}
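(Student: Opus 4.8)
The plan is to reduce the two claimed properties — the containment $Y \mathbin{\tilde\cdot} Y \subseteq X$ and the almost-isometry estimate — to applications of compactness on partial types over the type-definable group $G$, exploiting that both properties are exact on $G$ itself. Throughout, $\varphi_\cdot$ is fixed as supplied by \fref{lem:ApproxPartFuncNbhd}, so $x \mathbin{\tilde\cdot} y = \{z : \varphi_\cdot(x,y,z) = 0\}$ is always non-empty, equals $\{xy\}$ on $G^2$, and has diameter $\leq \delta$ on a suitable logical neighbourhood of $G$ for each $\delta > 0$.

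First I would handle the containment. Since $X > G$, by \fref{lem:LogNeighb}(ii) fix a zero set $X_0$ with $G < X_0 < X$, say $X_0$ the zero set of a formula $\eta$, and pick a rational $r > 0$ with $G < [\eta < r] < X_0$. The partial type in variables $x,y,z$ saying $\{x,y \in G\} \cup \{\varphi_\cdot(x,y,z) = 0\} \cup \{\eta(z) \geq r\}$ is inconsistent, because on $G$ we have $z = xy \in G \subseteq [\eta = 0]$. By compactness there is a logical neighbourhood $Y_1 > G$ such that $\{x,y \in Y_1\} \cup \{\varphi_\cdot(x,y,z) = 0\} \cup \{\eta(z) \geq r\}$ is inconsistent, i.e.\ $x,y \in Y_1 \Rightarrow x \mathbin{\tilde\cdot} y \subseteq [\eta < r] \subseteq X_0 \subseteq X$. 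Intersecting $Y_1$ with $X$ (still a logical neighbourhood of $G$ by \fref{lem:LogNeighb}(i), as the interior only grows with parameters) gives $X > Y_1 > G$ with $Y_1 \mathbin{\tilde\cdot} Y_1 \subseteq X$.

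Next the almost-isometry. Consider the partial type in $x,y,y',z,z'$ asserting $\{x,y,y' \in G\} \cup \{\varphi_\cdot(x,y,z) = 0, \varphi_\cdot(x,y',z') = 0\} \cup \{\,|d(y,y') - d(z,z')| \geq \varepsilon\,\}$ (with $d(y,y') - d(z,z')$ expressed via the usual $\dotminus$ combination so it is a genuine condition). On $G$ this is inconsistent: $z = xy$, $z' = xy'$, and left-invariance of the metric gives $d(z,z') = d(xy,xy') = d(y,y')$. By compactness it remains inconsistent with $\{x,y,y' \in Y_2\}$ for some logical neighbourhood $Y_2 > G$; the symmetric version with the roles of the factor swapped (using right-invariance, $d(yx,y'x) = d(y,y')$) yields another $Y_2'$. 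Now set $Y = Y_1 \cap Y_2 \cap Y_2' \cap X$, which is a logical neighbourhood of $G$ contained in $X$ by \fref{lem:LogNeighb}, and it has all the required properties simultaneously.

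The main obstacle is essentially bookkeeping rather than mathematics: one must be careful that each ``$|d(y,y') - d(z,z')| \geq \varepsilon$'' really is (equivalent to) a legitimate continuous condition so that compactness applies — this is routine, writing $\bigl(d(y,y') \dotminus d(z,z')\bigr) \vee \bigl(d(z,z') \dotminus d(y,y')\bigr) \geq \varepsilon$ — and that intersecting finitely many logical neighbourhoods, and intersecting with $X$, again produces a logical neighbourhood of $G$, which \fref{lem:LogNeighb} guarantees. The only genuine input beyond compactness is the exactness of multiplication and of the metric's invariance on $G$ itself, both of which are immediate from the hypotheses, so no further difficulty is expected.
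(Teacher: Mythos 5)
Your proof is correct and follows essentially the same route as the paper's: both write down partial types expressing failure of the containment $Y\mathop{\tilde\cdot}Y\subseteq X$ and of the almost-isometry, note that they are contradictory on $G$ (exactness of $\mathop{\tilde\cdot}$ and invariance of $d$ there), and push the contradiction to a logical neighbourhood by compactness, the paper merely packaging the containment step with a single Urysohn-type formula $\chi_{G,X}$ (equal to $0$ on $G$ and $1$ outside $X$) where you use an intermediate zero set. One containment in your write-up is inverted --- with $X_0=[\eta=0]$ and $r>0$ one has $[\eta<r]\supseteq X_0$, so ``$G<[\eta<r]<X_0$'' and the later ``$[\eta<r]\subseteq X_0$'' are not right as stated --- but the intended choice (a formula $\eta$ vanishing on $G$ with $[\eta<r]\subseteq X$, which the proof of \fref{lem:LogNeighb} supplies directly) is exactly what is needed and repairs this immediately.
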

\begin{proof}
  Since $X > G$ we can find a formula $\chi_{G,X}(x)$ which is equal to $0$ on
  $G$ and to $1$ outside $X$.
  Then all the following partial types are contradictory:
  \begin{align*}
    & x,y \in G,   \, z\in x \mathop{\tilde\cdot} y, \,
    \chi_{G,X}(z) = 1, \\
    & x,y,y' \in G,\, z\in x \mathop{\tilde\cdot} y, \,
    z' \in x\mathop{\tilde\cdot} y', \,
    |d(y,y') - d(z,z')| \geq \varepsilon, \\
    & x,y,y' \in G,\, z\in y \mathop{\tilde\cdot} x, \,
    z' \in y'\mathop{\tilde\cdot} x, \,
    |d(y,y') - d(z,z')| \geq \varepsilon.
  \end{align*}
  By compactness there exist a logical neighbourhood $X > Y > G$
  such that they are still contradictory when
  $G$ is replaced everywhere with $Y$, and this $Y$ will do.
\end{proof}

\subsection{Cantor-Bendixson and Morley ranks}

We shall use $\aleph_0$-stability via Morley ranks, i.e., Cantor-Bendixson
ranks in $\tS_n(M)$ where $\cM$ is a sufficiently saturated model.
Such ranks were studied in detail in
\cite{BenYaacov:TopometricSpacesAndPerturbations} in the
general setting of topometric spaces.
In fact, that paper discusses several possible notions of
Cantor-Bendixson ranks, of which we use one.

\begin{ntn}
  In this paper, $\CB_\varepsilon$ will denote what is denoted in
  \cite{BenYaacov:TopometricSpacesAndPerturbations}
  by $\CB_{f,\varepsilon}$, i.e., the Cantor-Bendixson
  rank based on the removal of open $\varepsilon$-finite sets.
\end{ntn}

\begin{dfn}
  Let $X$ be a type-definable set of $n$-tuples.
  We define the \emph{$\varepsilon$-Morley rank} of $X$
  as $\RM_\varepsilon(X) = \CB^{\tS_n(M)}_\varepsilon(X)$,
  where $\cM$ is any sufficiently
  saturated model containing the parameters needed for $X$
  (this does not depend on the choice of $\cM$).
  If $\alpha = \RM_\varepsilon(X)$ then $[X] \cap
  \tS_n(M)^{(\alpha)}_\varepsilon$
  is $\varepsilon$-finite, i.e., $\varepsilon$-$k$-finite for some
  $k$, and we define the \emph{$\varepsilon$-Morley degree} of $X$ to
  be $\dM_\varepsilon(X) = k$.
\end{dfn}

\begin{lem}
  \label{lem:ConstRM}
  Let $X$ be a type-definable set.
  For all $r > 0$ there are $0 < \varepsilon < r' < r$ such that
  $\RM_{r'}(X) = \RM_{r'-\varepsilon}(X)$.
\end{lem}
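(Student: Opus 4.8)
The plan is to exploit the fact that $\varepsilon \mapsto \RM_\varepsilon(X)$ is a monotone function of $\varepsilon$, combined with the fact that its values are bounded ordinals, to extract a point of "local constancy" in the required range. First I would recall from \cite{BenYaacov:TopometricSpacesAndPerturbations} that for $0 < \varepsilon' \leq \varepsilon$ one has $\CB_{\varepsilon}^{\tS_n(M)}(X) \leq \CB_{\varepsilon'}^{\tS_n(M)}(X)$, i.e., $\RM_\varepsilon(X)$ is a nonincreasing (equivalently, $\RM$ viewed as a function of the \emph{inverse} of the precision is nondecreasing) function of $\varepsilon$, and in all cases it is an ordinal bounded by some fixed $\alpha_0$ (e.g.\ the ordinal $\CB$-rank of the whole space, or $\infty$ if one allows that, in which case one first restricts attention to the type-definable set $X$ and notes the relevant rank is bounded since $X$ is a fixed closed set in a fixed compact space — more carefully, one works below the maximal rank attained on $[X]$).

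Next, fix $r > 0$ and consider the sequence $r_k = r/2^{k}$ for $k \geq 1$, or more simply any strictly decreasing sequence $r = s_0 > s_1 > s_2 > \cdots \to 0$ lying in $(0,r)$ after the first term. The ordinals $\RM_{s_k}(X)$ form a nonincreasing sequence of ordinals indexed by $k < \omega$, hence this sequence is eventually constant: there is $k_0$ with $\RM_{s_k}(X) = \RM_{s_{k_0}}(X)$ for all $k \geq k_0$. Pick any $k \geq k_0$ with $k \geq 1$ so that $s_k < r$, set $r' = s_k$ and $\varepsilon = s_k - s_{k+1}$; then $0 < \varepsilon < r' < r$ and $r' - \varepsilon = s_{k+1}$, so $\RM_{r'}(X) = \RM_{s_k}(X) = \RM_{s_{k+1}}(X) = \RM_{r'-\varepsilon}(X)$, as required. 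One should double-check that $\varepsilon < r'$, which holds since $s_k - s_{k+1} < s_k$; and that both $r'$ and $r' - \varepsilon$ are positive, which is immediate.

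The only potential obstacle is the boundedness of the ordinals $\RM_{s_k}(X)$: if $\RM_\varepsilon(X)$ could be $\infty$ (i.e.\ the $\varepsilon$-Cantor-Bendixson process never terminates) for small $\varepsilon$, the "eventually constant" argument could collapse to the trivial observation that the sequence is constantly $\infty$, but that is still \emph{constant}, so the conclusion $\RM_{r'}(X) = \RM_{r'-\varepsilon}(X)$ still holds (with the convention that $\infty = \infty$); alternatively, in an $\aleph_0$-stable theory all these ranks are genuine ordinals and the argument is as above. Either way the argument is robust. I expect the main subtlety, such as it is, to be merely invoking the correct monotonicity statement for $\CB_\varepsilon$ from \cite{BenYaacov:TopometricSpacesAndPerturbations} with the right variance in $\varepsilon$, and being careful that the chosen $\varepsilon$ is strictly smaller than $r'$ rather than merely smaller than $r$.
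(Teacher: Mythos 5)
There is a genuine gap, and it is located at the very point you flagged as ``merely invoking the correct monotonicity statement \dots with the right variance''. You correctly record that $\RM_\varepsilon(X)$ is nonincreasing in $\varepsilon$ (i.e.\ $\RM_\varepsilon(X) \leq \RM_{\varepsilon'}(X)$ whenever $\varepsilon' \leq \varepsilon$), but you then choose a \emph{decreasing} sequence $r = s_0 > s_1 > \cdots \to 0$ and assert that $(\RM_{s_k}(X))_{k<\omega}$ is a nonincreasing sequence of ordinals. By your own monotonicity statement it is \emph{nondecreasing} in $k$, and a nondecreasing sequence of ordinals need not be eventually constant (it can be $0,1,2,\dots$, even if bounded above by a fixed ordinal). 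So the stabilisation step, which is the entire content of the lemma, does not go through. None of your fallback remarks (boundedness of the ranks, the $\infty$ convention) repairs this: the only case your argument genuinely covers is the degenerate one where the ranks are eventually $\infty$.

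The fix is to run the sequence in the opposite direction, which is what the paper does: take $r_n = r(1-2^{-n-1})$ increasing to $r$ from below, so that $\RM_{r_n}(X)$ is a \emph{decreasing} sequence of ordinals and stabilises by well-foundedness; then set $r' = r_{n+1}$ and $\varepsilon = r_{n+1}-r_n$ for $n$ large enough, so that $r'-\varepsilon = r_n$ and $0 < \varepsilon < r' < r$. The moral is that the point of local constancy promised by the lemma must be sought near $r$, where the nonincreasing ordinal-valued function $\varepsilon \mapsto \RM_\varepsilon(X)$ has already ``settled'', and not near $0$, where it may still be climbing.
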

\begin{proof}
  For $n < \omega$ define $r_n = r(1-2^{-n-1})$.
  Then $(r_n\colon n < \omega)$ is an increasing sequence,
  whereby $\RM_{r_n}(X)$ is a decreasing sequence of ordinals and
  therefore stabilises from some point onwards.
  Thus we may take $r' = r_{n+1}$ and $\varepsilon =  r_{n+1}-r_n$
  for any $n$ large enough.
\end{proof}

We recall some definitions and the main result we use
from \cite[Section~3.3]{BenYaacov:TopometricSpacesAndPerturbations}.
\begin{ntn}
  Let $X,Y$ be two compact spaces, $R \subseteq X \times Y$ a closed relation.
  For $x \in X$ and $A \subseteq Y$ we define:
  \begin{align*}
    R_x & = \{y \in Y\colon (x,y) \in R\}, \\
    R^{\forall A} & = \{x \in X\colon R_x \subseteq A\}, \\
    R^{\exists A} & = \{x \in X\colon R_x \cap A \neq \emptyset\}.
  \end{align*}
\end{ntn}

\begin{fct}
  \label{fct:RelCBRank}
  Let $X,Y$ be two compact topometric spaces,
  $R \subseteq X \times Y$ a closed relation,
  and $\varepsilon,\delta > 0$ such that for all
  $(x,y),(x',y') \in R$:
  if $d_Y(y,y') \leq \delta$ then $d_X(x,x') \leq \varepsilon$.
  Let $K \subseteq X$ and $F \subseteq Y$ be closed sets such that
  $K \subseteq (R^{\exists Y})^\circ \cap R^{\forall F}$.
  Then $\CB^X_\varepsilon(K) \leq \CB^Y_\delta(F)$.
\end{fct}
\begin{proof}
  \cite[Theorem~3.23]{BenYaacov:TopometricSpacesAndPerturbations}.
\end{proof}

The following result contains the technical core of the proof of the
(first approximation of the) main theorem.
Given a logical neighbourhood of $G$ on which product is well-behaved
and a point in that neighbourhood (outside $G$), we can approximately
translate $G$ by that element, obtaining an approximately isometric
copy of $G$.
Using the Fact cited above we can compare the Morley ranks of $G$ and
of this approximate copy.
In addition, if the element we translate by is far enough from $G$,
then so will be the entire copy.

\begin{lem}
  \label{lem:ApproxCopyRM}
  Let $G$ be a group on which the metric is invariant.

  Then for every $\varepsilon > 0$ and logical neighbourhood $X > G$ there
  exists an intermediary logical neighbourhood $X > Y > G$
  such that for all $r > \varepsilon$
  either $Y \subseteq \bar B(G,r)$ or
  there is a type-definable subset $Z \subseteq X$ such that
  $d(Z,G) > r - \varepsilon$ and
  $\RM_{r-\varepsilon}(Z) \geq \RM_r(G)$.
\end{lem}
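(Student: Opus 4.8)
The plan is to use \fref{lem:ApproxProdNbhd} to produce a well-behaved logical neighbourhood and then to build the ``approximate translate'' of $G$ by a point $c$ lying in it. Concretely, fix $\varepsilon>0$ and a logical neighbourhood $X>G$. First I would apply \fref{lem:ApproxProdNbhd} with $\varepsilon/2$ (or some fixed fraction of $\varepsilon$) to obtain an intermediary $X>Y>G$ with $Y\mathop{\tilde\cdot}Y\subseteq X$ and on which left and right approximate multiplication by elements of $Y$ is almost isometric up to $\varepsilon/2$. Now fix $r>\varepsilon$ and suppose $Y\not\subseteq\bar B(G,r)$; then there is a point $c\in Y$ with $d(c,G)>r$. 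Set $Z = \{z : z\in c\mathop{\tilde\cdot}g \text{ for some }g\in G\} = \bigcup_{g\in G} c\mathop{\tilde\cdot}g$, the approximate left-translate of $G$ by $c$; this is a type-definable subset of $Y\mathop{\tilde\cdot}Y\subseteq X$.

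Next I would establish the two required properties of $Z$. For the distance estimate: if $z\in c\mathop{\tilde\cdot}g$ with $g\in G$, then using that right-multiplication by $g^{-1}$ (an honest group element, so $c\mathop{\tilde\cdot}g$ compared with $c = c\cdot e$ under right-multiplication by, respectively, $g$ and $e$ — but both $g$ and $e$ lie in $G\subseteq Y$) is almost isometric up to $\varepsilon/2$, one gets that $d(z, c)$ is within $\varepsilon/2$ of $d(g\cdot e^{-1}, \dots)$; more carefully, the almost-isometry statement in \fref{lem:ApproxProdNbhd} applied to $x=c$, $y=g$, $y'=e$ gives $|d(g,e)-d(z,c)|\le\varepsilon/2$ where $z\in c\mathop{\tilde\cdot}g$ and $c\in c\mathop{\tilde\cdot}e$ (since $c\cdot e = c$, so $c\in c\mathop{\tilde\cdot}e$). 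Hence for any $g'\in G$, $d(z,g')\ge d(c,g') - d(c,z) \ge d(c,G) - (d(g,e)+\varepsilon/2)$, which is not quite what we want; instead I would argue the other way: use that the map is almost isometric to transport the \emph{set} $G$ rather than individual distances. The clean way is: for $z\in c\mathop{\tilde\cdot}g$ and any $h\in G$, pick $z''\in c\mathop{\tilde\cdot}h$; then $|d(z,z'') - d(g,h)|\le\varepsilon/2$, and taking $h$ with $z''$ close to the nearest point of $G$ to $z$... . The honest route is to note $d(z,G) \ge d(c,G) - \varepsilon/2 > r-\varepsilon/2 > r-\varepsilon$: indeed any $z\in Z$ satisfies, for all $g'\in G$, by the almost-isometry with $x=c,y=g,y'=g'$ (so $z'\in c\mathop{\tilde\cdot}g'$ which must contain some point, and $d(z,z')$ is within $\varepsilon/2$ of $d(g,g')$), that $Z$ is an $\varepsilon/2$-approximate isometric copy of $G$ sitting at distance roughly $d(c,G)$; this forces $d(Z,G)>r-\varepsilon$.

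For the rank inequality I would invoke \fref{fct:RelCBRank}. Take the closed relation $R\subseteq \tS(M)\times\tS(M)$ whose fibres over $\tp(z)$ (for $z\in Z$) are $\{\tp(g): g\in G,\ z\in c\mathop{\tilde\cdot}g\}$ — i.e., $R$ is essentially the graph of ``$z$ is an approximate $c$-translate of $g$'', cut down appropriately; equivalently run the Fact with $X\leftarrow [Z]$, $Y\leftarrow[G]$, $K=[Z]$, $F=[G]$. The almost-isometry property of $\mathop{\tilde\cdot}$ on $Y$ is exactly the hypothesis of \fref{fct:RelCBRank}: if two approximate translates $z,z'$ of $g,g'$ satisfy $d(g,g')\le\delta$ then $d(z,z')\le\delta+\varepsilon/2$, so with the right bookkeeping (choosing $\delta$ and the $\varepsilon$'s so the losses fit inside the stated $r-\varepsilon$) we get $\RM_{r-\varepsilon}(Z)\le$ — wait, the inequality in the Fact is $\CB^X_\varepsilon(K)\le\CB^Y_\delta(F)$, which with $X\leftarrow[Z]$ would give $\RM(Z)\le\RM(G)$, the wrong direction. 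So instead I would set it up the other way: $X\leftarrow[G]$, $Y\leftarrow[Z]$, with $R$ the \emph{reverse} relation (``$g$ is the preimage of the translate $z$''), $K=[G]$, $F=[Z]$; one needs $[G]\subseteq (R^{\exists})^\circ\cap R^{\forall [Z]}$, i.e. every $g\in G$ has all its approximate translates in $Z$ (true by definition of $Z$) and the approximate-translate multimap is defined near $G$ — this uses that $Y>G$ is a genuine logical neighbourhood so $[G]$ is in the interior. The hypothesis ``$d(z,z')\le\delta\Rightarrow d(g,g')\le\varepsilon$'' then follows from almost-isometry of $\mathop{\tilde\cdot}$, and the Fact yields $\RM_\varepsilon(G)\le\RM_\delta(Z)$; choosing $\varepsilon,\delta$ (renaming) so that $\delta\le r-\varepsilon$ while $\varepsilon$ is the value at which we took the rank of $G$ gives $\RM_{r-\varepsilon}(Z)\ge\RM_r(G)$ as required (monotonicity of $\RM_\bullet$ in the subscript absorbs the slack).

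The main obstacle I anticipate is the bookkeeping of the various $\varepsilon$'s and the direction of the inequality in \fref{fct:RelCBRank}: one must choose the logical neighbourhood $Y$ (via \fref{lem:ApproxProdNbhd}) for an auxiliary $\varepsilon'$ small compared to $\varepsilon$, verify that the almost-isometry constant $\varepsilon'$ of approximate multiplication on $Y$ is exactly what feeds the hypothesis of the Fact with a suitable $\delta$, and check that $Z$, although only type-definable and defined through the multivalued $\mathop{\tilde\cdot}$, genuinely lies in $X$ (this is where $Y\mathop{\tilde\cdot}Y\subseteq X$ is used) and at distance $>r-\varepsilon$ from $G$ (this is where invariance of the metric, hence almost-isometry of translation rather than merely almost-well-definedness, is essential — without invariance the translate could be squashed toward $G$). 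Everything else is a routine compactness-plus-Fact argument.
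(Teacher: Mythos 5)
Your overall architecture is the paper's: apply \fref{lem:ApproxProdNbhd} to get $X>Y>G$ with $Y\mathop{\tilde\cdot}Y\subseteq X$ and almost-isometric approximate translation, take $c=y_0\in Y$ with $d(c,G)>r$, set $Z=c\mathop{\tilde\cdot}G$, and compare ranks via \fref{fct:RelCBRank}. The rank half of your argument is essentially right once you correct the direction: the paper takes the relation $R=\{(\tp(u/M),\tp(v/M))\colon u\in Y,\ v\in y_0\mathop{\tilde\cdot}u\}$ inside $S^2$ where $S=\tS(M)$, so that $R^{\exists S}=[Y]\supseteq [G]$ gives the interiority condition (this is where $Y>G$ being a logical neighbourhood is used), $[G]\subseteq R^{\forall[Z]}$, and the almost-isometry gives $d(q,q')\leq r-\varepsilon\Rightarrow d(p,p')\leq r$, whence $\RM_r(G)\leq\RM_{r-\varepsilon}(Z)$. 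One caution: you propose running the Fact with the ambient spaces $X\leftarrow[G]$, $Y\leftarrow[Z]$; since $\CB^X_\varepsilon(K)$ is computed via the derivatives of the ambient space $X$, taking $X=[G]$ computes an intrinsic rank of $[G]$ which need not equal $\RM_r(G)=\CB^{\tS_n(M)}_r([G])$, and it also trivialises the interiority hypothesis. Keep the ambient space equal to the full type space, as the paper does.

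The genuine gap is in the distance estimate $d(Z,G)>r-\varepsilon$, which you never actually establish. Your first attempt (comparing $c\mathop{\tilde\cdot}g$ with $c\mathop{\tilde\cdot}e$) you rightly abandon; your ``honest route'' asserts $d(z,G)\geq d(c,G)-\varepsilon/2$ but justifies it only by the fact that $Z$ is internally an approximate isometric copy of $G$ -- a statement about distances \emph{within} $Z$, which says nothing about the distance from $Z$ to $G$. The missing idea is to use \emph{right}-translation by a group element: given $z\in c\mathop{\tilde\cdot}h$ with $h\in G$ and an arbitrary $g\in G$, write $g'=gh^{-1}\in G$, so that $g=g'\cdot h\in g'\mathop{\tilde\cdot}h$ and $z\in c\mathop{\tilde\cdot}h$ are both right-translates by the same $h\in G\subseteq Y$ of the points $c$ and $g'$. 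The right-multiplication clause of \fref{lem:ApproxProdNbhd} then gives $|d(c,g')-d(z,g)|\leq\varepsilon$, and since $g'\in G$ forces $d(c,g')>r$, we get $d(z,g)>r-\varepsilon$; a final compactness argument upgrades the pointwise bound to $d(Z,G)>r-\varepsilon$. Without this step (which is exactly where right-invariance of the metric enters), the lemma's first conclusion about $Z$ is unproved.
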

\begin{proof}
  Apply \fref{lem:ApproxProdNbhd} to $X > G$ and $\varepsilon > 0$ to obtain
  $X > Y > G$.
  If $Y \subseteq \bar B(G,r)$ then we are done, so assume the contrary,
  i.e., that there exists $y_0 \in Y$ such
  that $d(y_0,G) > r$.
  Set $Z = y_0\mathop{\tilde\cdot} G \subseteq X$, which is type-definable.

  To see that $d(G,Z) > r-\varepsilon$, let $g \in G$ and $z \in Z$.
  Then $z \in y_0\mathop{\tilde\cdot} h$ for some $h \in G$.
  Let $g' = gh^{-1} \in G$, so $d(y_0,g') > r$.
  Since $y_0,g',h \in Y$ we have
  $|d(z,g) - d(y_0,g')| \leq \varepsilon$, whereby
  $d(z,g) > r-\varepsilon$.
  By a compactness argument it follows that
  $d(G,Z) > r-\varepsilon$.
  
  Let $\cM$ be a fairly saturated model containing all necessary
  parameters, including $y_0$, and let $S = \tS(M)$.
  Set
  \begin{gather*}
    R = \{(\tp(u/M),\tp(v/M))\colon u \in Y, v \in y_0\mathop{\tilde\cdot} u\}.
  \end{gather*}
  Then $R \subseteq S^2$ is a closed relation,
  $[Y] = R^{\exists S}$ and $[G] \subseteq R^{\forall[Z]}$, so
  $[G] \subseteq (R^{\exists S})^\circ \cap R^{\forall[Z]}$.
  On the other, if $(p,q),(p'q') \in R$ then
  $|d(p,p')-d(q,q')| \leq \varepsilon$,
  so $d(q,q') \leq r-\varepsilon \Longrightarrow d(p,p') \leq r$.
  Applying \fref{fct:RelCBRank} we get:
  \begin{gather*}
    \RM_r(G)
    = \CB^S_r([G]) \leq \CB^S_{r-\varepsilon}([Z])
    = \RM_{r-\varepsilon}(Z).
    \qedhere
  \end{gather*}
\end{proof}

\subsection{The definability proof}

Notice that we have not yet used the assumption of
$\aleph_0$-stability (the ranks in \fref{lem:ApproxCopyRM} may well be
infinite).

\begin{thm}
  \label{thm:MainThmApprox}
  Let $G$ be a type-definable group in an $\aleph_0$-stable theory
  on which the metric is invariant.
  Then $G$ is definable.
\end{thm}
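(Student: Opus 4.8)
The plan is to use the rank machinery of \fref{lem:ApproxCopyRM} together with $\aleph_0$-stability to show that $G$ cannot be ``thick at arbitrarily small scales'' near its boundary; concretely, I will prove that for every $\varepsilon > 0$ the type-definable set $\bar B(G,\varepsilon)$ is a logical neighbourhood of $G$, which by the last item of \fref{fct:DefSet} is exactly what it means for $G$ to be definable. Fix $\varepsilon > 0$. By \fref{lem:ConstRM} applied with $r = \varepsilon$ we may choose $0 < \delta < r' < \varepsilon$ with $\RM_{r'}(G) = \RM_{r'-\delta}(G)$; write $\alpha = \RM_{r'}(G)$ and $k = \dM_{r'}(G)$ — here is where $\aleph_0$-stability enters, guaranteeing that $\alpha < \infty$. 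Now take any logical neighbourhood $X > G$ and apply \fref{lem:ApproxCopyRM} to $X$ and $\delta$, producing an intermediate logical neighbourhood $X > Y > G$.

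The key step is to rule out the second alternative in \fref{lem:ApproxCopyRM} at scale $r'$. Suppose it occurred: then there is a type-definable $Z \subseteq X$ with $d(Z,G) > r' - \delta$ and $\RM_{r'-\delta}(Z) \geq \RM_{r'}(G) = \alpha$. Since $d(Z,G) > r' - \delta > 0$, the sets $[Z]$ and $[G]$ are ``$(r'-\delta)$-separated'' in $\tS_n(M)$, and $[Z \cup G]$ has $\RM_{r'-\delta}$ at least $\alpha$ with the ranks of the two pieces adding; iterating the translation argument (translating $G$ by a sequence of points $y_0, y_1, \dots$ in successive neighbourhoods, each far from the union of the previous copies) one manufactures, for each $N$, a type-definable set of $\RM_{r'-\delta}$-degree at least $N$ at rank $\alpha$ — or, if ranks strictly increase at some stage, a set of strictly larger rank. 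Both contradict the definition of $\alpha = \RM_{r'-\delta}(G)$ and $k = \dM_{r'-\delta}(G)$ as the rank and degree of $G$ itself (after checking the translated copies sit inside a fixed type-definable superset whose rank is controlled). Hence the first alternative must hold: $Y \subseteq \bar B(G,r') \subseteq \bar B(G,\varepsilon)$. Since $Y$ is a logical neighbourhood of $G$ and $Y \subseteq \bar B(G,\varepsilon)$, we get $G < Y$ with $[Y] \subseteq [\bar B(G,\varepsilon)]$, so a fortiori $[G] \subseteq [\bar B(G,\varepsilon)]^\circ$, i.e.\ $\bar B(G,\varepsilon)$ is a logical neighbourhood of $G$.

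As $\varepsilon > 0$ was arbitrary, \fref{fct:DefSet}(vi) applies and $G$ is definable.

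\emph{Where the difficulty lies.} The genuinely delicate point is the iteration in the middle paragraph: one must (i) choose the translating elements $y_0, y_1, \dots$ so that the approximate copies $Z_i = y_i \mathop{\tilde\cdot} G$ are pairwise $(r'-\delta)$-separated and each separated from $G$, shrinking the logical neighbourhood at each stage via \fref{lem:ApproxProdNbhd}; (ii) verify that all these copies, together with $G$, lie inside one fixed type-definable set — for instance inside $\bar B(G, \diam \text{-bound})$ or inside a product-closed zero set from \fref{prp:PartFuncClosedZeroSetExt} — whose $\RM_{r'-\delta}$ is then forced to exceed every finite multiple of $k$ at level $\alpha$, or to exceed $\alpha$; and (iii) contradict the maximality built into the definition of $\RM_{r'-\delta}(G)$ and $\dM_{r'-\delta}(G)$. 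Making the bookkeeping on ranks and degrees precise, and ensuring the ``far from $G$'' hypothesis of \fref{lem:ApproxCopyRM} persists along the iteration, is the main obstacle; everything else is a direct appeal to the lemmas already in hand.
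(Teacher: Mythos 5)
Your overall strategy matches the paper's: reduce to showing $\bar B(G,r) > G$, normalise $r$ via \fref{lem:ConstRM} so that $\RM_r(G) = \RM_{r-\varepsilon}(G)$, and then use \fref{lem:ApproxCopyRM} to derive a contradiction from a translated copy $Z$ far from $G$. But there is a genuine gap in how you obtain the contradiction. You take $X > G$ to be an \emph{arbitrary} logical neighbourhood, and then try to contradict ``the rank and degree of $G$ itself'' by manufacturing many pairwise separated copies inside $X$. That cannot work as stated: the copies $Z_i$ are disjoint from $G$, so no number of them says anything about $\dM_{r-\varepsilon}(G)$, which only measures $[G]\cap\tS_n(M)^{(\alpha)}_{r-\varepsilon}$. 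What you would need to contradict is the rank or degree of some fixed set containing $G$ \emph{and} all the copies --- and if $\RM_{r-\varepsilon}(X) > \RM_{r-\varepsilon}(G)$, or if $X$ has larger degree, then $[X]\cap\tS_n(M)^{(\alpha)}_{r-\varepsilon}$ can accommodate arbitrarily many $(r-\varepsilon)$-separated points of rank $\alpha$ and your iteration produces no contradiction at all. You flag this as difficulty (ii), but your proposed fixes ($\bar B(G,\cdot)$, a product-closed zero set) do not control the rank or degree of the superset.

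The missing idea is a compactness property of the Cantor--Bendixson analysis: one can choose the neighbourhood $X > G$ so that $\RM_{r-\varepsilon}(X) = \RM_{r-\varepsilon}(G)$ \emph{and} $\dM_{r-\varepsilon}(X) = \dM_{r-\varepsilon}(G)$ (this is where $\aleph_0$-stability is used to ensure these quantities are defined, i.e.\ that the rank is an ordinal and the degree finite). With this choice a \emph{single} copy $Z \subseteq X$ with $d(G,Z) > r-\varepsilon$ and $\RM_{r-\varepsilon}(Z) \geq \RM_{r-\varepsilon}(G) = \RM_{r-\varepsilon}(X)$ already yields
\begin{gather*}
  \dM_{r-\varepsilon}(G) = \dM_{r-\varepsilon}(X) \geq \dM_{r-\varepsilon}(G \cup Z)
  \geq \dM_{r-\varepsilon}(G) + \dM_{r-\varepsilon}(Z) > \dM_{r-\varepsilon}(G),
\end{gather*}
a contradiction, with no iteration and none of the bookkeeping you describe. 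So the iteration is simultaneously insufficient (without the right $X$) and unnecessary (with it); you should replace it by the compactness choice of $X$.
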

\begin{proof}
  Let $r > 0$, and we shall show that $\bar B(G,r) > G$.
  There is no harm if we decrease $r$, so
  by \fref{lem:ConstRM} we may assume that
  $\RM_r(G) = \RM_{r-\varepsilon}(G)$ for some $0 < \varepsilon < r$.

  By compactness we can find $X > G$ such that
  $\RM_{r-\varepsilon}(G) = \RM_{r-\varepsilon}(X)$ and
  $\dM_{r-\varepsilon}(G) = \dM_{r-\varepsilon}(X)$.
  Apply \fref{lem:ApproxCopyRM} to find $X > Y > G$ such that either
  $Y \subseteq \bar B(G,r)$ or there is $Z \subseteq Y$ such that
  $d(G,Z) > r-\varepsilon$ and
  $\RM_{r-\varepsilon}(Z) \geq \RM_r(G) = \RM_{r-\varepsilon}(G)$.

  In the second case we have
  $\RM_{r-\varepsilon}(G) = \RM_{r-\varepsilon}(Z) = \RM_{r-\varepsilon}(X)$, so
  from $d(G,Z) > r-\varepsilon$ we obtain:
  \begin{align*}
    \dM_{r-\varepsilon}(G)
    & =\dM_{r-\varepsilon}(X) \geq \dM_{r-\varepsilon}(G\cup Z) \\
    & \geq \dM_{r-\varepsilon}(G) + \dM_{r-\varepsilon}(Z) \\
    & > \dM_{r-\varepsilon}(G).
  \end{align*}
  This is impossible, so $\bar B(G,r) \supseteq Y > G$, as desired.
\end{proof}

\subsection{Further facts}

We prove here some additional facts regarding
Morley ranks and groups in an $\aleph_0$-stable theory $T$.

It was shown in \cite{BenYaacov-Usvyatsov:CFO} that for a stable
formula $\varphi$, the non forking
extensions of $\varphi$-types can be characterised as those having maximal
local Cantor-Bendixson ranks.
We shall show here that in
an $\aleph_0$-stable theory the same holds for global Cantor-Bendixson
ranks, i.e., for Morley ranks.

\begin{prp}
  \label{prp:NonDivMR}
  Assume $T$ is $\aleph_0$-stable, $A \subseteq B$, $q \in \tS_n(B)$,
  $p = q\rest_A \in \tS_n(A)$.
  Then the following are equivalent:
  \begin{enumerate}
  \item The type $q$ is a non forking extension of $p$.
  \item For all $\varepsilon > 0$:
    $\RM_\varepsilon(p) = \RM_\varepsilon(q)$.
  \item For all $\varepsilon > \varepsilon' > 0$:
    $\RM_\varepsilon(p) \leq \RM_{\varepsilon'}(q)$.
  \item For arbitrarily small $\varepsilon > 0$:
    $\RM_\varepsilon(p) = \RM_\varepsilon(q)$.
  \end{enumerate}
\end{prp}
\begin{proof}
  There is no harm in assuming that $B = \fM$.

  Let $\varepsilon > 0$.
  For an ordinal $\alpha$ recall that
  $\tS_n(\fM)^{(\alpha)}_\varepsilon$ denotes the $\alpha$th
  $\varepsilon$-Cantor-Bendixson derivative of $\tS_n(\fM)$ (i.e.,
  $(\varepsilon,f)$-Cantor-Bendixson derivative in the terminology of
  \cite{BenYaacov:TopometricSpacesAndPerturbations}.
  Let $\alpha_\varepsilon = \RM_\varepsilon(p)$, i.e., the maximal
  $\alpha$ such that
  $\tS_n(\fM)^{(\alpha)}_\varepsilon \cap [p] \neq \emptyset$.
  So let
  $X_\varepsilon
  = \tS_n(\fM)^{(\alpha_\varepsilon)}_\varepsilon \cap [p]$,
  the set of extensions
  of $p$ of maximal $\RM_\varepsilon$ rank.
  It is compact, and since $\alpha_\varepsilon$ is maximal it
  admits a cover by relatively open $\varepsilon$-finite
  subsets of $\tS_n(\fM)^{(\alpha_\varepsilon)}_\varepsilon$.
  This cover admits a finite sub-cover so $X_\varepsilon$ is $\varepsilon$-finite
  itself.
  \begin{cycprf}
  \item[\impnext]
    Observe that each $X_\varepsilon$ is invariant under the action of
    automorphisms of $\fM$ which fix $A$, and thus contains a non
    forking extension of $p$.
    As all non forking extensions of $p$
    are conjugate over $A$ they all belong to $X_\varepsilon$, i.e., they all
    satisfy $\RM_\varepsilon(q) = \alpha_\varepsilon = \RM_\varepsilon(p)$.
  \item[\impnext]
    As
    $\varepsilon > \varepsilon' \Longrightarrow \RM_\varepsilon(p)
    \leq \RM_{\varepsilon'}(p) = \RM_{\varepsilon'}(q)$.
  \item[\impnext]
    By \fref{lem:ConstRM} there are arbitrarily small pairs
    $\varepsilon > \varepsilon' > 0$ such that
    $\RM_\varepsilon(q) = \RM_{\varepsilon'}(q)$,
    whereby $\RM_\varepsilon(p) \leq \RM_\varepsilon(q)$.
    The inverse inequality is immediate.
  \item[\impfirst]
    Let
    $\cE =
    \{
    \varepsilon > 0\colon \RM_\varepsilon(p) = \RM_\varepsilon(q)
    \}$,
    so $\inf \cE = 0$.
    Let
    $X = \bigcap_{\varepsilon \in \cE} X_\varepsilon \subseteq [p]
    \subseteq \tS_n(\fM)$.
    On the
    one hand $X$ is compact and therefore metrically complete (see
    \cite{BenYaacov:TopometricSpacesAndPerturbations}).
    On the other hand it is $\varepsilon$-finite for
    every $\varepsilon$, i.e., totally bounded.
    Thus $X$ is metrically compact.
    It follows that for every $\varphi(\bar x,\bar y)$,
    the image of $X$ in
    $\tS_\varphi(\fM)$ is metrically compact
    (since $\varphi$ is uniformly continuous),
    so each $q \in X$ is definable over $\acl^{eq}(A)$.
    Thus every $q \in X$ is a non forking extension of $p$.
  \end{cycprf}
\end{proof}

Notice in passing that we showed that in a continuous $\aleph_0$-stable
theory every type has ``metrically compact multiplicity'', in analogy
with the finite multiplicity of types in a classical $\aleph_0$-stable theory.

\begin{lem}
  \label{lem:IsomMR}
  Let $f\colon X \to Y$ be a definable isometry between type-definable
  sets, and let $A$ be a set containing all the relevant parameters.
  Then
  $\RM_\varepsilon(a/A)
  \leq \RM_{\varepsilon'}(f(a)/A)
  \leq \RM_{\varepsilon''}(a/A)$
  for all $a \in X$ and all
  for all $\varepsilon > \varepsilon' > \varepsilon'' > 0$.
\end{lem}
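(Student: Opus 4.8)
The plan is to reduce to the first inequality: $f^{-1}\colon Y\to X$ is again a definable isometry (its graph is the flip of the graph of $f$, hence type-definable, hence defines a definable function), so once $\RM_\varepsilon(a/A)\le\RM_{\varepsilon'}(f(a)/A)$ is known for all $\varepsilon>\varepsilon'>0$, applying it to $f^{-1}$ and the element $f(a)$ (with $\varepsilon',\varepsilon''$ in place of $\varepsilon,\varepsilon'$) gives $\RM_{\varepsilon'}(f(a)/A)\le\RM_{\varepsilon''}(f^{-1}(f(a))/A)=\RM_{\varepsilon''}(a/A)$. So fix $a\in X$, set $p=\tp(a/A)$, and fix a sufficiently saturated $\cM$ containing $A$ and all parameters of $X$, $Y$, $f$. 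Inside $\tS_n(\cM)$ let $[X]$, $[Y]$ be the closed sets corresponding to $X$, $Y$ and let $[p]\subseteq[X]$ be the closed set of $\cM$-extensions of $p$. The map $\hat f\colon[X]\to[Y]$, $\tp(u/\cM)\mapsto\tp(f(u)/\cM)$, is well defined and continuous by \fref{lem:DefFuncSubst} (applied to substitutions $\chi(f(x))$ for formulae $\chi$), and a homeomorphism because $f^{-1}$ is definable as well. Moreover $\hat f$ is an isometry for the type metric: conjugating by automorphisms over $\cM$ shows the realisations of $\tp(f(u)/\cM)$ are exactly the $f(u')$ with $u'\models\tp(u/\cM)$, so for $q,q'\in[X]$ one has $d(\hat f q,\hat f q')=\inf\{d(f(u),f(u'))\colon u\models q,\,u'\models q'\}=\inf\{d(u,u')\colon u\models q,\,u'\models q'\}=d(q,q')$; the same conjugation argument gives $\hat f([p])=[\tp(f(a)/A)]$.

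Next I would apply \fref{fct:RelCBRank}, taking the two compact topometric spaces to be $[X]$ and $[Y]$ themselves, not all of $\tS_n(\cM)$ — this matters, since for a type-definable but non-definable $X$ the interior hypothesis of the Fact would fail in the full type space. Let $R=\{(q,\hat f q)\colon q\in[X]\}$, a closed relation as $\hat f$ is a continuous map on a compact space; put $\varepsilon$ on the $[X]$-side, $\delta=\varepsilon'$ on the $[Y]$-side, $K=[p]$, and $F=\hat f([p])=[\tp(f(a)/A)]$. The metric hypothesis of the Fact holds because $\hat f$ preserves $d$ exactly and $\varepsilon'\le\varepsilon$; since $\hat f$ is total, $R^{\exists[Y]}=[X]$, which is open in itself, so $K\subseteq(R^{\exists[Y]})^\circ$; and $R^{\forall F}=\hat f^{-1}(F)=[p]=K$. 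The Fact then gives $\CB^{[X]}_\varepsilon([p])\le\CB^{[Y]}_{\varepsilon'}([\tp(f(a)/A)])$.

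Finally I would identify these with Morley ranks via the fact that the $\varepsilon$-Cantor--Bendixson rank of a closed set is intrinsic, $\CB^S_\varepsilon(P)=\CB^P_\varepsilon(P)$ for closed $P\subseteq S$ (the $\varepsilon$-derivative refers only to the relative topology on $P$ and the metric on $P$, so all iterated derivatives live inside $P$); applied to $[p]\subseteq[X]$ and to $[\tp(f(a)/A)]\subseteq[Y]$ this gives $\CB^{[X]}_\varepsilon([p])=\CB^{\tS_n(\cM)}_\varepsilon([p])=\RM_\varepsilon(a/A)$ and likewise $\CB^{[Y]}_{\varepsilon'}([\tp(f(a)/A)])=\RM_{\varepsilon'}(f(a)/A)$, which is the claimed inequality. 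One could equally argue directly: $\hat f$ restricts to a topometric isomorphism $[p]\to[\tp(f(a)/A)]$ and $\CB_\varepsilon$ is a topometric invariant, so in fact $\RM_\varepsilon(a/A)=\RM_\varepsilon(f(a)/A)$, which is stronger than the stated inequalities. The only real difficulty here is organisational rather than mathematical — running \fref{fct:RelCBRank} inside the type-definable sets rather than the full type space and then reconciling the ranks computed there with the ambient Morley rank; the rest is routine conjugation and compactness.
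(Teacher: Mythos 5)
There is a genuine gap, and it sits exactly where you dismiss the difficulty as ``organisational''. The $\varepsilon$-Morley rank is by definition the \emph{ambient} Cantor--Bendixson rank: $\RM_\varepsilon(a/A)$ is the largest $\alpha$ with $\tS_n(M)^{(\alpha)}_\varepsilon\cap[p]\neq\emptyset$, where the derivatives are taken in the whole type space. Your claimed identity $\CB^S_\varepsilon(P)=\CB^P_\varepsilon(P)$ for closed $P\subseteq S$ is false: a point of $P$ can have a relatively open $\varepsilon$-finite neighbourhood in $P$ while every neighbourhood of it in $S$ accumulates mass from $S\setminus P$, so one only gets $P^{(\alpha),\,\mathrm{intrinsic}}\subseteq S^{(\alpha)}_\varepsilon\cap P$, i.e.\ $\CB^{[X]}_\varepsilon([p])\leq\RM_\varepsilon(a/A)$ --- the wrong direction for your chain. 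Your argument, as written, would in fact prove $\RM_\varepsilon(a/A)=\RM_\varepsilon(f(a)/A)$ with no loss in $\varepsilon$, which the paper explicitly states is \emph{not} known even when $X$ is definable (this is the content of \fref{qst:DefSetRelRM} and the remark preceding it); that should have been a warning sign. The three distinct radii $\varepsilon>\varepsilon'>\varepsilon''$ in the statement are not slack: they are the price of working in the ambient space.

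The intended argument (the ``same tools as \fref{lem:ApproxCopyRM}'') stays in $S=\tS_n(M)$ throughout. Use \fref{lem:ApproxPartFuncNbhd} to extend $f$ to a multi-valued $\tilde f$ defined everywhere, single-valued and equal to $f$ on $X$; by compactness (exactly as in \fref{lem:ApproxProdNbhd}) there is a logical neighbourhood $W>X$ on which $\tilde f$ is approximately isometric, say $d(v,v')\leq\varepsilon'\Longrightarrow d(u,u')\leq\varepsilon$ for $u,u'\in W$, $v\in\tilde f(u)$, $v'\in\tilde f(u')$. Then $R=\{(\tp(u/M),\tp(v/M))\colon u\in W,\ v\in\tilde f(u)\}$ is a closed relation with $R^{\exists S}=[W]$, so $[p]\subseteq[X]\subseteq[W]^\circ=(R^{\exists S})^\circ$, and $[p]\subseteq R^{\forall[\tp(f(a)/A)]}$ since $f$ is $A$-definable; \fref{fct:RelCBRank} now gives $\RM_\varepsilon(a/A)\leq\RM_{\varepsilon'}(f(a)/A)$ directly, and the second inequality follows by applying the same to $f^{-1}$. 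Your reduction to one inequality via $f^{-1}$ and your verification that $\hat f([p])=[\tp(f(a)/A)]$ are fine and can be kept; it is the passage through the intrinsic ranks of $[X]$ and $[Y]$ that must be abandoned.
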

\begin{proof}
  Proved using similar tools to those used to prove
  \fref{lem:ApproxCopyRM}.
\end{proof}

In the special case where $f$ is an isometric bijection between entire
sorts it induces an isometric homeomorphism of the corresponding
topometric type spaces and the ranks are equal by a direct argument.
It is not obvious that the ranks should be equal even in the spacial
case where $X$ is a definable set (and $Y$ its image, and therefore
definable as well).
The ranks would be equal if we had a positive answer for the
following:
\begin{qst}
  \label{qst:DefSetRelRM}
  Let $X$ be a definable set in $\cM$, say over $\emptyset$.
  Then the induced structure on $X$ is closed under quantification, so
  we may view  $X$ as a structure of its own right, and if $\cM$
  is $\aleph_0$-stable then so is $X$.

  In classical logic, types in the structure $X$
  have the same Morley ranks as the corresponding types in $\cM$.
  Is it true in continuous logic?
\end{qst}

\begin{thm}
  Assume $\langle G,S\rangle$ is a type-definable transitive group action
  with an invariant metric in an $\aleph_0$-stable continuous
  theory $T$, $p \in \tS_S(A)$.
  Then $p$ is generic if and
  only if
  $\RM_{\varepsilon'}(p) \geq \RM_\varepsilon(S)$
  for all $\varepsilon > \varepsilon' > 0$.
  If $S$ occupies an entire sort then
  this is further equivalent to
  $\RM_\varepsilon(p) = \RM_\varepsilon(S)$ for all $\varepsilon > 0$.
\end{thm}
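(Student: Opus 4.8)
The plan is to transcribe the classical fact that the generic types of a transitive stable group action are exactly those of maximal Morley rank, with equalities of ranks inevitably coarsened into the $\varepsilon>\varepsilon'$ hierarchy because \fref{lem:IsomMR} preserves $\RM_\varepsilon$ only up to an arbitrarily small loss. Recall (from \cite{BenYaacov:StableGroups}) that $p\in\tS_S(A)$ is generic precisely when, for $a\models p$ and every $g\in G$ generic over $Aa$, one has $ga\ind_A g$; recall also that $G$ has a generic type over every parameter set, and that generics of $S$ do not fork over the parameters defining the action. The one piece of structure special to the present situation is that invariance of the metric makes each translation $\lambda_g\colon S\to S$, $s\mapsto gs$, a definable isometry over $g$, so that \fref{lem:IsomMR} applies to it; combined with \fref{prp:NonDivMR} this lets us pass freely between ``$\ind$''-statements and ``equal-$\RM_\varepsilon$''-statements.

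For the implication from the rank condition to genericity, fix $a\models p$ and $g\in G$ generic over $Aa$ (in particular $g\ind_A a$). By \fref{prp:NonDivMR}~(iii) it is enough to verify $\RM_\varepsilon(ga/A)\le\RM_{\varepsilon'}(ga/Ag)$ for every $\varepsilon>\varepsilon'>0$. Choosing $\hat\varepsilon\in(\varepsilon',\varepsilon)$, one runs the chain
$\RM_\varepsilon(ga/A)\le\RM_\varepsilon(S)\le\RM_{\hat\varepsilon}(p)=\RM_{\hat\varepsilon}(a/Ag)\le\RM_{\varepsilon'}(ga/Ag)$,
where the first step is $ga\in S$, the second is the hypothesis, the equality is \fref{prp:NonDivMR} applied to $g\ind_A a$, and the last is \fref{lem:IsomMR} for $\lambda_g$ over $Ag$ (only two of the three $\varepsilon$'s in that lemma are needed here). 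Hence $ga\ind_A g$, and as $a,g$ were arbitrary of the required form, $p$ is generic.

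For the converse, fix $\varepsilon>\varepsilon'>0$, pick $\varepsilon''\in(\varepsilon',\varepsilon)$, and let $q\in\tS_S(A)$ satisfy $\RM_{\varepsilon''}(q)=\RM_{\varepsilon''}(S)$ (such $q$ exist: restrict to $A$ a global type lying in the $\varepsilon''$-derivative of $[S]$ of maximal order). Realise $b\models q$ and $a\models p$ with $a\ind_A b$, and, using transitivity, choose $g\in G$ with $ga=b$ \emph{and} $\tp(g/Aa)$ generic in $G$; the existence of such a $g$ is where an $\varepsilon$-graded orbit--stabiliser rank equality $\RM(G)\doteq\RM(\Stab a)+\RM(S)$ is used, together with $\RM_{\varepsilon''}(b/Aa)=\RM_{\varepsilon''}(q)=\RM_{\varepsilon''}(S)$. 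Genericity of $p$ then yields $b=ga\ind_A g$, and one runs
$\RM_\varepsilon(S)\le\RM_{\varepsilon''}(S)=\RM_{\varepsilon''}(b/A)=\RM_{\varepsilon''}(b/Ag)\le\RM_{\varepsilon'}(a/Ag)=\RM_{\varepsilon'}(p)$,
using monotonicity of $\RM_\bullet(S)$ in the subscript, \fref{prp:NonDivMR} twice (for $b\ind_A g$ and $g\ind_A a$), and \fref{lem:IsomMR} for $\lambda_{g^{-1}}$ over $Ag$. This gives the desired $\RM_{\varepsilon'}(p)\ge\RM_\varepsilon(S)$. When $S$ is an entire sort one has $\RM_\varepsilon(p)\le\RM_\varepsilon(S)$ always and $\RM_\bullet(S)$ non-increasing, so ``$\RM_\varepsilon(p)=\RM_\varepsilon(S)$ for all $\varepsilon$'' trivially implies the $\varepsilon>\varepsilon'$ condition; for the reverse one upgrades the $\varepsilon$-estimates to equalities at every level by feeding them through \fref{lem:ConstRM} applied to $S$, or equivalently observes that a generic of $S$ does not fork over the parameters of the action and invokes \fref{prp:NonDivMR}.

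I expect the main obstacle to be precisely the orbit--stabiliser step in the converse: producing, with the $\varepsilon$-losses kept under control, an element of $G$ that is generic over $Aa$ and carries a realisation of $p$ onto a realisation of a maximal-rank type of $S$. This should be handled by a \fref{fct:RelCBRank}-style argument applied to the closed relation $\{(\tp(g/M),\tp(ga/M))\colon g\in G\}$, in the spirit of \fref{lem:ApproxCopyRM} and \fref{lem:IsomMR}, with the various intermediate $\varepsilon$'s chosen with enough slack to absorb both the isometry losses and the additivity losses.
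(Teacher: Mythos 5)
Your overall strategy matches the one the paper intends (the paper's own ``proof'' is only a pointer to the analogous local-rank theorem in \cite{BenYaacov:StableGroups}, to be combined with \fref{prp:NonDivMR} and \fref{lem:IsomMR}), and your first direction is correct and complete: the chain $\RM_\varepsilon(ga/A)\le\RM_\varepsilon(S)\le\RM_{\hat\varepsilon}(p)=\RM_{\hat\varepsilon}(a/Ag)\le\RM_{\varepsilon'}(ga/Ag)$ needs nothing beyond $g\ind_A a$, invariance of the metric, and the two cited results, after which \fref{prp:NonDivMR} yields $ga\ind_A g$.

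The converse, however, contains a genuine gap, exactly at the step you flag, and the patch you propose would not close it. You must produce, from $a\models p$ and a \emph{prescribed} $b\models q$ with $a\ind_A b$, an element $g$ with $ga=b$ and $g\ind_A a$ (or $g$ generic over $Aa$). The set of such $g$ is a coset of $\Stab(a)$ defined over $Aab$, and nothing guarantees that this coset meets the locus of types over $Aab$ not forking over $A$; already for $G$ acting on itself by translation, where $g=ba^{-1}$ is forced, the independence $g\ind_A a$ is extracted from genericity of \emph{both} $a$ and $b$ via \fref{fct:Geenric}. The ``orbit--stabiliser equality'' $\RM(G)\doteq\RM(\Stab a)+\RM(S)$ you invoke is a Lascar-style additivity that fails for Morley rank even in classical $\omega$-stable theories and is established nowhere for the topometric ranks $\RM_\varepsilon$; and a \fref{fct:RelCBRank}-type argument compares ranks of closed sets but cannot manufacture a point with a prescribed image \emph{and} a prescribed independence. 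The repair must come from the stable-group side rather than the rank side: first observe, by compactness applied to the closed sets $\bigl\{q\in\tS_S(A):\RM_{\varepsilon'}(q)\ge\RM_\varepsilon(S)\bigr\}$, that a single $q$ satisfies the rank condition for \emph{all} $\varepsilon>\varepsilon'>0$ and is therefore generic by your first direction; then import from \cite{BenYaacov:StableGroups} the corresponding facts for homogeneous spaces (in essence, that two independent generics of $S$ are carried to one another by a group element suitably independent from them, obtained by identifying $S$ with $G/\Stab(a)$ over a realisation of $p$ and reducing to the group case). With such a $g$ in hand your chain of inequalities goes through verbatim; without it the converse is unproven.
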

\begin{proof}
  Along the same lines as the proof of
  \cite[Theorem~6.17]{BenYaacov:StableGroups},
  using \fref{prp:NonDivMR} and \fref{lem:IsomMR}.
  We leave the details to the reader.
\end{proof}

If the answer to \fref{qst:DefSetRelRM} is positive then the
assumption that $S$ occupies an entire sort is superfluous (since in
any case $S$ is definable).

\section{Definable metrics}
\label{sec:DefMet}

\subsection{Extensions of partial definable metrics}

Metrics and pseudo-metrics are a special (and distinguished) kind of
predicates.
\begin{dfn}
  \begin{enumerate}
  \item A \emph{definable (pseudo-)metric} is a definable predicate
    defining a (pseudo-)metric.
  \item Let $X$ be a type-definable set.
    A \emph{partial definable (pseudo-)metric} on $X$ is a
    definable partial predicate on $X \times X$ defining a (pseudo-)metric
    there.
  \end{enumerate}
  Again, when specifying the domain we drop the ``partial''.
  On the other hand, when wishing to make explicit that a definable
  metric is not partial, we say it is \emph{total}.
\end{dfn}

All type-definable metrics on a set $X$ are essentially the same.
This has first been proved in \cite{BenYaacov:Morley}, but since the
setting is somewhat different and the proof is short we repeat it.
\begin{lem}
  \label{lem:EquivDefMet}
  Let $d_1$ and $d_2$ be two definable metrics on a
  type-definable set $X$.
  Then they are uniformly equivalent.
\end{lem}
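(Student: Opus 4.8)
*Let $d_1$ and $d_2$ be two definable metrics on a type-definable set $X$. Then they are uniformly equivalent.*

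The plan is to argue by compactness in the type space. Since $d_1$ and $d_2$ are both definable predicates, each is a continuous function on the appropriate type space; the issue is only to show that small $d_1$-distance forces small $d_2$-distance (and vice versa by symmetry of the argument). So fix $\varepsilon > 0$, and consider the partial type $\Sigma_\delta(\bar x, \bar y)$ over the relevant parameters asserting
\[
  \bar x \in X,\quad \bar y \in X,\quad d_1(\bar x,\bar y) \leq \delta,\quad d_2(\bar x,\bar y) \geq \varepsilon.
\]
I would like to conclude that $\Sigma_\delta$ is inconsistent for $\delta$ small enough. The only delicate point is what happens as $\delta \to 0$: one must know that the set of pairs with $d_1(\bar x,\bar y) = 0$ coincides (on $X$) with the diagonal, so that $d_2$ vanishes there too. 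But $d_1$ is a \emph{metric} on $X$, not merely a pseudo-metric, so $d_1(\bar a,\bar b) = 0$ with $\bar a,\bar b \in X$ forces $\bar a = \bar b$, hence $d_2(\bar a,\bar b) = 0 < \varepsilon$. This is the hypothesis that makes the whole thing work and is where I expect the only real subtlety to lie.

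Granting this, the argument is: suppose toward a contradiction that $\Sigma_{2^{-n}}$ is consistent for every $n$. Then $\bigcap_n \Sigma_{2^{-n}}$ is finitely satisfiable (the $\Sigma_\delta$ are decreasing in $\delta$), hence by saturation realised by some pair $(\bar a,\bar b)$ with $\bar a,\bar b \in X$, $d_1(\bar a,\bar b) \leq 2^{-n}$ for all $n$ — so $d_1(\bar a,\bar b) = 0$ — yet $d_2(\bar a,\bar b) \geq \varepsilon > 0$. This contradicts the previous paragraph. Hence there is $\delta = \delta(\varepsilon) > 0$ with: for all $\bar a,\bar b \in X$, $d_1(\bar a,\bar b) < \delta \Longrightarrow d_2(\bar a,\bar b) \leq \varepsilon$. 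By the symmetric argument (interchanging $d_1$ and $d_2$) one also gets a modulus in the other direction, and these two moduli together express exactly that $d_1$ and $d_2$ are uniformly equivalent on $X$.

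I expect this to be essentially immediate, so there is really no hard part; if anything, the only thing to be careful about is to phrase the compactness step in a sufficiently saturated model (as the definition of type-definable set in the excerpt requires) and to make sure the parameters of $d_1$, $d_2$, and $X$ are all absorbed into the base set over which one takes types. One could alternatively phrase the proof purely topologically: the set $K = \{(\tp(\bar a),\tp(\bar b)) : \bar a,\bar b \in X\}$ is closed in the relevant type space, $d_1$ and $d_2$ descend to continuous functions on $K$, and $\{d_1 = 0\} \cap K = \{d_2 = 0\} \cap K$ is the ``diagonal'' of $X$; a continuous function on a compact space is uniformly continuous with respect to any other continuous function having the same zero set, which gives the two moduli at once. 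Either formulation yields the claim in a few lines.
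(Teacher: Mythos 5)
Your proof is correct and is essentially the paper's own argument: both fix $\varepsilon>0$, form the partial type asserting membership in $X$ together with one distance small and the other at least $\varepsilon$, observe that its realisation would violate the fact that a definable metric vanishes only on the diagonal of $X$, and conclude by compactness that a single condition $d_i(x,y)\leq 2^{-n}$ already suffices, finishing by symmetry. The only cosmetic difference is which of $d_1,d_2$ plays which role in the displayed partial type.
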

\begin{proof}
  Indeed, Let $\varepsilon > 0$, and consider the partial type
  \begin{gather*}
    \{x,y \in X\} \cup \{d_1(x,y) \geq \varepsilon\} \cup
    \{d_2(x,y) \leq 2^{-n}\colon n < \omega\}.
  \end{gather*}
  Since $d_2$ is a metric this partial type is inconsistent, and by
  compactness there exists $n < \omega$ such that
  $d_2(x,y) \leq 2^{-n} \Longrightarrow d_1(x,y) < \varepsilon$
  for all $x,y \in X$.
  Since this works for all $\varepsilon > 0$, and when exchanging the roles of
  $d_1$ and $d_2$, they are uniformly equivalent.
\end{proof}

\begin{lem}
  \label{lem:ChangeAmbientMetric}
  Let $\cM$ be a structure and $d_1$ a total definable metric
  on $\cM$.
  Let $\cM'$ be the structure obtained from $\cM$ by letting the
  standard metric on $\cM'$ be $d_1$ and relegating the original
  metric on $\cM$ to a new, non distinguished symbol $d_2$:
  \begin{gather*}
    d^{\cM'} = d_1^\cM, \qquad d_2^{\cM'} = d^\cM.
  \end{gather*}
  Then $M$ and $M'$ have the same definable sets and predicates.
\end{lem}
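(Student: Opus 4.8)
The plan is to show that passing to $d_1$ as the distinguished metric does not change which predicates are definable, and the key observation is that definability is a purely topological notion once we fix the topology on the type spaces, which is unaffected by the change of metric. First I would recall that by \fref{lem:EquivDefMet} the two metrics $d^\cM$ and $d_1^\cM$ are uniformly equivalent as partial (indeed total) definable metrics on $\cM$; in particular they induce the same uniformity and hence the same topology, both on $M$ itself and, more importantly, on each type space $\tS_n(A)$. A definable predicate over $A$ is, intrinsically, a continuous function $\tS_n(A) \to [0,1]$ (for the logic topology), and this characterisation makes no reference to the metric; so $\cM$ and $\cM'$ have exactly the same definable predicates, provided we check that the type spaces $\tS_n(A)$ really are the same for the two structures.

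The one genuine point to verify is that $\cM$ and $\cM'$ have the same underlying theory in the relevant sense — i.e. that the logic topology on types is the same. Since $d_1$ is a \emph{total} definable metric on $\cM$, the symbol $d_2$ added to $\cM'$ is interpreted by a predicate ($d^\cM$) which is already definable in $\cM$; conversely $d_1 = d^{\cM'}$ is definable in $\cM'$ because $d^\cM = d_2^{\cM'}$ is its distinguished metric and $d_1$ was assumed definable over $d^\cM$ (it is a total definable metric on $\cM$, so $d_1^\cM$ is a definable predicate of $\cM$, hence a definable predicate of $\cM'$ by the same argument run in the other direction). Thus each structure is a definitional expansion of the other: adding $d_2$ to $\cM$, or adding $d_1$ to $\cM'$, adds nothing new. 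It follows that $\tS_n(A)$ is canonically the same compact space with the same logic topology whether computed in $\cM$ or in $\cM'$, and therefore the continuous functions on it — i.e. the definable predicates over $A$ — coincide.

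Finally, for definable \emph{sets}: by \fref{fct:DefSet} a closed set $X$ is definable over $A$ iff $\sup_{\bar x \in X}\varphi(\bar x,\bar y)$ is definable over $A$ for every formula $\varphi$, a condition phrased entirely in terms of definable predicates, which we have just shown agree for $\cM$ and $\cM'$. The only subtlety is that ``closed'' must be interpreted consistently, but $d^{\cM'} = d_1^\cM$ is uniformly equivalent to $d^\cM$ by \fref{lem:EquivDefMet}, so the two metrics define the same closed subsets of each $M^n$. Hence $\cM$ and $\cM'$ have the same definable sets as well, completing the proof.

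The main obstacle — really the only thing requiring care — is making precise that ``definable predicate'' is intrinsic to the type-space topology and does not secretly depend on the choice of metric; once one grants that definability means continuity on $\tS_n(A)$ (together with the observation that uniformly equivalent metrics give the same uniform structure and hence the same completions, so the $n$-types and the closed sets are literally the same objects), everything else is bookkeeping.
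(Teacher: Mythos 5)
Your proposal is correct and follows essentially the same route as the paper's (very terse) proof: the two structures are definitional expansions of one another, hence have the same definable predicates, and the quantification characterisation of definable sets in \fref{fct:DefSet} is metric-independent. Your additional observations --- that \fref{lem:EquivDefMet} guarantees the two metrics induce the same topology, so ``closed'' means the same thing in both structures --- merely make explicit what the paper leaves implicit.
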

\begin{proof}
  Clearly, the two structures have the same definable predicates.
  The characterisation of definable sets
  via quantification in \fref{fct:DefSet} does not depend on
  the metric.
\end{proof}

While we know that every partial definable metric, say on $X$, extends
to a total definable predicate, it is not clear whether it can
extend to a total definable \emph{metric}.
We give some partial results in this direction.

The best result is under the assumption that $X$ is definable.
We start with a few lemmas that allow us to reduce the case of a
metric to the case of a pseudo-metric.
In fact the reduction step holds more generally for zero sets.

\begin{lem}
  \label{lem:PsMetJoin}
  Let $X$ be any set of points,
  $d_1$ and $d_2$ two pseudo-metrics on $X$.
  Then $d_1 \vee d_2$ is a pseudo-metric on $X$.
\end{lem}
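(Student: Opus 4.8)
The plan is to verify directly that $d_1 \vee d_2$ satisfies the three defining properties of a pseudo-metric: non-negativity together with vanishing on the diagonal, symmetry, and the triangle inequality. The first two are immediate and I would dispose of them first. Non-negativity is clear since $d_1, d_2 \geq 0$, and $(d_1 \vee d_2)(x,x) = d_1(x,x) \vee d_2(x,x) = 0 \vee 0 = 0$. Symmetry follows because each $d_i$ is symmetric: $(d_1 \vee d_2)(x,y) = d_1(x,y) \vee d_2(x,y) = d_1(y,x) \vee d_2(y,x) = (d_1 \vee d_2)(y,x)$.

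The only point that needs an (entirely routine) observation is the triangle inequality, and here I would first record two elementary facts about the operation $\vee$ on $\bR$: it is monotone in each argument (if $a \leq a'$ and $b \leq b'$ then $a \vee b \leq a' \vee b'$), and it is sub-additive in the sense that $(a + c) \vee (b + d) \leq (a \vee b) + (c \vee d)$ for non-negative reals, since both $a + c$ and $b + d$ are bounded by $(a \vee b) + (c \vee d)$. Then, given $x, y, z \in X$, applying monotonicity to the triangle inequalities for $d_1$ and $d_2$ yields
\[
  (d_1 \vee d_2)(x,z) \leq \bigl( d_1(x,y) + d_1(y,z) \bigr) \vee \bigl( d_2(x,y) + d_2(y,z) \bigr),
\]
and sub-additivity bounds the right-hand side by $\bigl( d_1(x,y) \vee d_2(x,y) \bigr) + \bigl( d_1(y,z) \vee d_2(y,z) \bigr) = (d_1 \vee d_2)(x,y) + (d_1 \vee d_2)(y,z)$, which is exactly what is required.

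There is no genuine obstacle: the statement is a pure fact about pointwise maxima and involves no definability (which is presumably why it is phrased for an arbitrary set of points), so the entire content of the surrounding discussion will lie in the companion statements that such a maximum is again \emph{definable}, not in this lemma itself.
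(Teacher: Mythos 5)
Your proof is correct and is essentially the paper's argument: the paper handles the triangle inequality by assuming without loss of generality that the maximum is attained by $d_1$ and then bounding $d_1(x,z) + d_1(y,z)$ by $d_3(x,z) + d_3(y,z)$, which is exactly the content of your monotonicity-plus-subadditivity observation. Nothing is missing.
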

\begin{proof}
  Let $d_3 = d_1 \vee d_2$.
  Clearly $d_3(x,x) = 0$ and $d_3(x,y) = d_3(y,x)$.
  For the triangle inequality, given $x,y,z \in X$ we may assume that
  $d_3(x,y) = d_1(x,y)$ in which case
  $d_3(x,y) \leq d_1(x,z) + d_1(y,z) \leq d_3(x,z) + d_3(y,z)$.
\end{proof}

\begin{lem}
  \label{lem:PsMetToMet}
  Let $X$ be a zero set.
  Then there exists a definable pseudo-metric $d_1$ with the following
  properties:
  \begin{enumerate}
  \item For all $x,y \in X$: $d_1(x,y) = 0$.
  \item If $x \neq y$ and $x \notin X$ then $d_1(x,y) > 0$.
  \end{enumerate}
  It follows that if $d_2$ is a definable
  pseudo-metric whose restriction to $X$ is a metric
  then $d_1 \vee d_2$ is a definable metric which agrees with $d_2$ on
  $X$.
\end{lem}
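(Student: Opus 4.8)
The statement to prove is Lemma~\ref{lem:PsMetToMet}: given a zero set $X$, produce a definable pseudo-metric $d_1$ that vanishes on $X$ but separates any point outside $X$ from all other points, and then observe that joining $d_1$ with any definable pseudo-metric $d_2$ that restricts to a metric on $X$ yields a definable metric agreeing with $d_2$ on $X$. The plan is to build $d_1$ directly from the defining predicate of $X$.

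First I would fix a definable predicate $\psi(\bar x)$ with $X = \{\bar x\colon \psi(\bar x) = 0\}$, which we may take to be $1$-Lipschitz and $[0,1]$-valued. The natural candidate is something like
\begin{gather*}
  d_1(\bar x,\bar y) = |\psi(\bar x) - \psi(\bar y)| \vee \bigl( (\psi(\bar x)\wedge\psi(\bar y)) \cdot (d(\bar x,\bar y)\wedge 1) \bigr),
\end{gather*}
or a variant thereof: the point is that the first term already separates points with different $\psi$-values, and the second term, which is ``switched off'' as soon as one of the two points lies in $X$, contributes a genuine positive quantity $\min(\psi(\bar x),\psi(\bar y))\cdot d(\bar x,\bar y) > 0$ whenever $\bar x \ne \bar y$ and neither is in $X$. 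Property~(i) is then immediate since $\psi$ vanishes on $X$, and property~(ii) holds because if $\bar x \notin X$ then $\psi(\bar x) > 0$, so if moreover $\psi(\bar y) = 0$ the first term is positive, while if $\psi(\bar y) > 0$ and $\bar x \ne \bar y$ the second term is positive.

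The step I expect to be the main obstacle is checking the triangle inequality for this $d_1$, because the product $\min(\psi(\bar x),\psi(\bar y))\cdot d(\bar x,\bar y)$ is not obviously subadditive. I would handle this either by replacing $d(\bar x,\bar y)$ with a bounded modification and exploiting that $\psi$ is $1$-Lipschitz (so $|\psi(\bar x)-\psi(\bar y)|$ absorbs the ``error'' when the minimum of $\psi$ jumps along a path $\bar x,\bar z,\bar y$), or by replacing the above formula with a supremum/infimum over a scale of Urysohn-type functions — e.g. $d_1(\bar x,\bar y) = \sup_{n} 2^{-n}\bigl( (\psi(\bar x)\dotminus 2^{-n}) - (\psi(\bar y)\dotminus 2^{-n})\bigr)$ combined with a separating term — which are manifestly pseudo-metrics as suprema of pseudo-metrics of the form $|g(\bar x) - g(\bar y)|$. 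In the worst case I would prove the triangle inequality by the same case analysis as in Lemma~\ref{lem:PsMetJoin}: fix $\bar x,\bar y,\bar z$, assume without loss that $d_1(\bar x,\bar y)$ is realised by one of the two terms, and bound it termwise; the Lipschitz property of $\psi$ is exactly what makes the cross-terms work out. Definability of $d_1$ is free since it is built by continuous connectives from the definable predicates $\psi$ and $d$.

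Finally, for the ``it follows'' clause: by Lemma~\ref{lem:PsMetJoin}, $d_1 \vee d_2$ is a definable pseudo-metric; on $X$ it equals $d_2$ by property~(i), and $d_2$ is a metric there by hypothesis; off $X$, if $\bar x \ne \bar y$ and (say) $\bar x \notin X$, then $d_1(\bar x,\bar y) > 0$ by property~(ii), hence $(d_1\vee d_2)(\bar x,\bar y) > 0$. So $d_1 \vee d_2$ is a genuine definable metric agreeing with $d_2$ on $X$, as claimed.
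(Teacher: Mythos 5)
You have the right reduction for the ``it follows'' clause (via \fref{lem:PsMetJoin}) and you correctly flag the triangle inequality as the crux, but the crux is not resolved, and your main candidate in fact fails. Take $\psi(x)=\psi(y)=1$, $\psi(z)=1-\varepsilon$, $d(x,y)=1$, $d(x,z)=d(z,y)=\tfrac12$ (all consistent with $\psi$ being $1$-Lipschitz). Then your $d_1(x,y)=1$, while $d_1(x,z)=d_1(z,y)=\max\bigl(\varepsilon,\tfrac{1-\varepsilon}{2}\bigr)=\tfrac{1-\varepsilon}{2}$ for small $\varepsilon$, so $d_1(x,z)+d_1(z,y)=1-\varepsilon<d_1(x,y)$: the Lipschitz term $|\psi(x)-\psi(y)|$ does not absorb the deficit of the product term, because you are taking a maximum rather than a sum. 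Your fallback family $\sup_n 2^{-n}\bigl|(\psi(x)\dotminus 2^{-n})-(\psi(y)\dotminus 2^{-n})\bigr|$ is indeed a pseudo-metric, but it depends only on the pair $(\psi(x),\psi(y))$ and hence vanishes whenever $\psi(x)=\psi(y)$; it therefore cannot separate two distinct points outside $X$ with equal $\psi$-values, and the ``separating term'' you would need to adjoin is precisely the object you have not constructed.

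The fix is a one-line instance of your ``supremum of pseudo-metrics of the form $|g(x)-g(y)|$'' idea, with the family indexed by points of the structure rather than by a scale of truncation levels: the paper sets
\begin{gather*}
  d_1(x,y) \;=\; \sup_z \bigl|\,\varphi(x)\wedge d(x,z)\;-\;\varphi(y)\wedge d(y,z)\,\bigr|,
\end{gather*}
where $\varphi$ defines the zero set $X$. Each $z$ contributes the pseudo-metric $|g_z(x)-g_z(y)|$ with $g_z(x)=\varphi(x)\wedge d(x,z)$, so the supremum is a definable pseudo-metric with no case analysis; it vanishes on $X\times X$ because $\varphi$ does; and the choice $z=y$ gives $d_1(x,y)\geq\varphi(x)\wedge d(x,y)>0$ whenever $x\notin X$ and $x\neq y$. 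Substituting this candidate for yours, the remainder of your argument (the join with $d_2$) goes through verbatim.
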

\begin{proof}
  Let $X$ be the zero set of $\varphi(x)$.
  Define:
  \begin{gather*}
    d_1(x,y)
    = \sup_z \left|
      \varphi(x)\wedge d(x,z) - \varphi(y)\wedge d(y,z)
    \right|.
  \end{gather*}
  Clearly, $d_1$ is a definable pseudo-metric and
  $x,y \in X \Longrightarrow d_1(x,y) = 0$.
  On the other hand, if $x \neq y$ and $x \notin X$ then
  $d_1(x,y) \geq \varphi(x)\wedge d(x,y) > 0$.

  Finally, assume that $d_2$ is a definable pseudo-metric which is a
  metric on $X$.
  By \fref{lem:PsMetJoin}, $d_1 \vee d_2$ is a (definable) pseudo-metric.
  The hypotheses now imply it is a metric and agrees with
  $d_2$ on $X$.
\end{proof}

\begin{prp}
  \label{prp:DefMetExt}
  Let $X$ be a definable set, $d_1$ a definable
  (pseudo-)metric on $X$.
  Then $d_1$ extends to a definable (pseudo-)metric.
\end{prp}
\begin{proof}
  Choose $\psi_1(x,y) \sqsupseteq d_1(x,y)$, and define:
  \begin{gather*}
    d_2(x,y) = \sup_{z\in X} |\psi_1(x,z) - \psi_1(y,z)|
  \end{gather*}
  Then $d_2$ is a definable pseudo-metric extending $d_1$.
  In case $d_1$ is a metric, $d_2$ is a definable pseudo-metric whose
  restriction to $X$ is a metric, so by \fref{lem:PsMetToMet} there is
  a definable metric $d_3$ extending $d_1$.
\end{proof}

We now move to considering the extension problem for partial definable
\hbox{(pseudo-)metrics} on type-definable sets.
We first observe this can be reduced to zero sets:

\begin{lem}
  \label{lem:ZeroSetMetExt}
  Assume $X$ is a type-definable set and $d_1$ a
  definable (pseudo-)metric on $X$.
  Then there exists a zero set $Y \supseteq X$
  such that $d_1$ extends to a
  definable (pseudo-)metric $d_2$ on $Y$.
\end{lem}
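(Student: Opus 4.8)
The plan is to extend $d_1$ first to a partial definable (pseudo-)metric on a zero set, using the machinery for extending partial definable functions to zero-set domains, and then to upgrade that pseudo-metric extension to an honest metric via \fref{lem:PsMetToMet}. Concretely, I would regard $d_1$ as a partial definable predicate on the type-definable set $X \times X$ and pick some total definable predicate $\psi_1(x,y) \sqsupseteq d_1(x,y)$. The natural candidate for an extended pseudo-metric is the usual ``symmetrised'' formula $d_2(x,y) = \sup_{z} \bigl| \psi_1(x,z) \dotminus \psi_1(z,y) \bigr|$ or a variant thereof, but one must be careful: off $X$ the predicate $\psi_1$ is arbitrary, so $d_2$ need not vanish on the diagonal, need not be symmetric, and the triangle inequality may fail outside $X$. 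All of these defects \emph{do} hold on $X$ (where $\psi_1$ restricts to the metric $d_1$), so each of them is a containment/equation of the form ``some definable predicate vanishes'' that holds on the type-definable set $X \times X \times X$ (or $X\times X$). By \fref{lem:ZeroSetByCoords} (applied to the relevant products of copies of $X$) together with \fref{lem:ZeroSetImplication}, each such condition persists on some zero set containing $X$; intersecting the finitely many zero sets so obtained — a zero set by \fref{lem:ZeroSetGd} and the remarks following it — yields a single zero set $Y \supseteq X$ on which the relevant formula genuinely defines a pseudo-metric $d_2$ extending $d_1$.

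For the metric case I would then invoke \fref{lem:PsMetToMet}: having produced a zero set $Y$ and a definable pseudo-metric $d_2$ on $Y$ extending $d_1$, the hypothesis that $d_1$ is a metric on $X$ says exactly that $d_2\rest_X$ is a metric. But \fref{lem:PsMetToMet} requires a pseudo-metric which is a metric on the zero set it is attached to, not merely on a smaller type-definable subset, so I would first shrink once more: the condition ``$d_2(x,y) = 0 \Longrightarrow x = y$'' holds on $X$, hence by \fref{lem:ZeroSetImplication} on some zero set $Y' \subseteq Y$ with $Y' \supseteq X$ — wait, this is the wrong direction, $d_2$ could genuinely identify points of $Y'$ near $X$. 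The cleaner route is: apply \fref{lem:PsMetToMet} directly to the zero set $Y$ to get a definable pseudo-metric $e_1$ with $e_1\rest_Y = 0$ and $e_1$ positive off $Y$; then $e_1 \vee d_2$ is a definable pseudo-metric on $Y$ whose restriction to $X$ is still $d_1$ (a metric), but it need not separate all of $Y$. So in fact the correct final statement is just what is claimed — a \emph{pseudo-metric} $d_2$ on a zero set $Y$ — and the metric refinement is deferred to the point where $Y$ has been arranged (elsewhere in the paper) to be the zero set on which separation holds; here I only need the pseudo-metric conclusion, or, in the metric case, I carry the extra data that $d_2\rest_X$ is a metric and hand off to \fref{lem:PsMetToMet} once $Y$ is genuinely a zero set on which $d_2$ is a metric.

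Let me state the approach I would actually write down cleanly: choose $\psi_1 \sqsupseteq d_1$, set $d_2(x,y) = \sup_z \bigl| \psi_1(x,z) - \psi_1(y,z) \bigr|$; this is symmetric, satisfies the triangle inequality, and vanishes on the diagonal \emph{everywhere}, so the only property not automatic on all of the ambient structure is that $d_2$ \emph{extends} $d_1$, i.e. $d_2(x,y) = d_1(x,y)$ for $x,y \in X$ — and that holds because for $x, y \in X$ we have $d_2(x,y) \ge |\psi_1(x,x)-\psi_1(y,x)| = d_1(x,y)$ and $d_2(x,y) = \sup_z|d(f\text{-free})| \le d_1(x,y)$ by the triangle inequality for $d_1$ applied on $X$... except $z$ ranges outside $X$, so the upper bound fails. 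This is the genuine obstacle, and it is exactly why a zero-set shrinkage is needed: the equality $d_2(x,y) = d_1(x,y)$ (equivalently $\sup_{z}|\psi_1(x,z)-\psi_1(y,z)| \le \psi_1(x,y)$, using that $\psi_1\rest_{X\times X}=d_1$) holds on $X \times X$ but not in general, so by \fref{lem:ZeroSetImplication} it holds on a zero set $Y \supseteq X$, and there $d_2$ is the desired extension. The hardest part is bookkeeping which defects are automatic everywhere versus which require the zero-set shrinkage, and making sure \fref{lem:ZeroSetByCoords} is applied to the right product $X \times X$ (not a single copy of $X$) so that the resulting $Y$ is a zero set in the sort on which the metric lives; the pseudo-metric/metric split then reduces to a clean citation of \fref{lem:PsMetToMet}.
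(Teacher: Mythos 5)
Your final ``clean'' construction has a genuine gap. Setting $d_2(x,y) = \sup_z |\psi_1(x,z)-\psi_1(y,z)|$ with $z$ ranging over the whole sort does give a total definable pseudo-metric, but, as you yourself observe midway, it need not restrict to $d_1$ on $X$: for $x,y\in X$ the supremand at a point $z\notin X$ involves values of $\psi_1$ that are completely unconstrained, so $d_2(x,y)$ can strictly exceed $d_1(x,y)$. Your proposed repair --- propagate the equality $d_2=d_1$ to a zero set via \fref{lem:ZeroSetImplication} --- cannot work, because that lemma only transfers conditions that \emph{already hold on the type-definable set}, and the condition $\sup_z|\psi_1(x,z)-\psi_1(y,z)|\le\psi_1(x,y)$ fails on $X\times X$ for exactly the reason just given: shrinking the domain of $(x,y)$ does nothing to the range of $z$ inside the supremum. (The sup-trick is what \fref{prp:DefMetExt} uses, but there $z$ is restricted to $X$, which requires $X$ to be \emph{definable}; for a merely type-definable $X$ quantification over $X$ is unavailable, and that is precisely the difficulty this lemma must circumvent.) A second gap: you never complete the metric case. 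Your first instinct --- propagate $d_1(x,y)=0\Longrightarrow d(x,y)=0$ from $X^2$ to a zero set by \fref{lem:ZeroSetImplication} --- was correct and is exactly what the paper does; the worry that made you retract it (``$d_2$ could identify points of $Y'$ near $X$'') is unfounded, since the compactness argument behind \fref{lem:ZeroSetImplication} rules that out on a suitable zero set. Having retracted it, you end up deferring the metric case rather than proving it.

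The fix is the idea already half-present in your opening paragraph: do not build a new formula at all. Take any $\psi_1\sqsupseteq d_1$ and observe that the pseudo-metric axioms \emph{for $\psi_1$ itself} (reflexivity and the triangle inequality in the form $\psi_1(x,y)\le\psi_1(x,z)+\psi_1(y,z)$, from which symmetry follows; all packaged as the vanishing of a single definable predicate in three variables) hold on $X^3$. By \fref{lem:ZeroSetByCoords} they hold on $Y^3$ for some zero set $Y\supseteq X$, and $\psi_1\rest_{Y^2}$ is then a partial pseudo-metric on $Y$ extending $d_1$ --- no extension problem arises because the extension is tautologically $\psi_1$. In the metric case intersect $Y$ with the zero set furnished by \fref{lem:ZeroSetImplication} (plus \fref{lem:ZeroSetByCoords}) for the implication $\psi_1(x,y)=0\Longrightarrow d(x,y)=0$. \fref{lem:PsMetToMet} is not needed here; it belongs to the later step of passing from a partial to a total metric.
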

\begin{proof}
  In fact we prove something slightly stronger, namely that if
  $\psi_1 \sqsupseteq d_1$ is
  \emph{any} definable predicate extending $d_1$ then
  there exists a zero set $Y \supseteq X$ such that
  $\psi_1$ defines a (pseudo-)metric on $Y$.

  For this, let
  $\psi_2(x,y,z) =
  \psi_1(x,x) \vee (\psi_1(x,y) \dotminus \psi_1(x,z) \dotminus \psi_1(y,z))$.
  Then $\psi_2$ is zero on $X^3$, and by
  \fref{lem:ZeroSetByCoords} there is a zero set $Y \supseteq X$ such that
  $\psi_2$ is zero on $Y^3$.

  In case $d_1$ is a metric we have
  $d_1(x,y) = 0 \Longrightarrow d(x,y) = 0$ on $X^2$.
  By \fref{lem:ZeroSetImplication} (and \fref{lem:ZeroSetByCoords})
  there is a zero set $Y' \supseteq X$ such that
  $d_1(x,y) = 0 \Longrightarrow d(x,y) = 0$ for $x,y \in Y'$.
  Then $\psi_1$ defines a metric on the zero set $Y\cap Y' \supseteq X$.
\end{proof}

Joining \fref{lem:ZeroSetMetExt} with \fref{lem:PsMetToMet}, we reduce
the extension problems for partial metrics to the extension of
pseudo-metrics on zero sets.
Unfortunately, we do not know whether it is always true and suspect
that in its full generality it may fail.
The best approximation we have is:

\begin{lem}
  \label{lem:ApproxPsMetExt}
  Let $X$ be a zero set, $d_1$ a definable
  pseudo-metric on $X$.
  Then there exists a decreasing sequence of definable pseudo-metrics
  $d_{2,n}$ which converges uniformly to $d_1$ on $X$.
\end{lem}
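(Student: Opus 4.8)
The plan is to realize $d_1$ as an infimum of $X$-distances in a suitable auxiliary metric and to truncate. Fix $\psi_1(x,y) \sqsupseteq d_1(x,y)$, a definable predicate extending $d_1$, and fix a definable predicate $\chi(x)$ whose zero set is exactly $X$ (which exists since $X$ is a zero set). For a parameter $n < \omega$ the idea is to add a ``cost of leaving $X$'' term scaled by $2^n$, so that paths through points far from $X$ become expensive, and to define $d_{2,n}(x,y)$ by optimizing over a single intermediate point in $X$, namely something along the lines of
\begin{gather*}
  d_{2,n}(x,y) = \inf_{z \in X} \bigl( \psi_1(x,z) + \psi_1(z,y) \bigr) \wedge \bigl( 2^n \bigl( \chi(x) \vee \chi(y) \bigr) \bigr) \wedge 1,
\end{gather*}
or a mild variant thereof; since $X$ is definable relative to itself only — here $X$ is merely type-definable, so I cannot quantify over $z \in X$ directly and must instead use $\inf_z \bigl( \psi_1(x,z) + \psi_1(z,y) + N\chi(z) \bigr)$ for $N$ large, letting $N \to \infty$ pointwise on $X$. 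The point of the $\chi$ term is that on $X^2$ it vanishes, so $d_{2,n}$ restricted to $X$ reduces to $\inf_z (\psi_1(x,z)+\psi_1(z,y))$, which by the triangle inequality and symmetry for $d_1$ on $X$ equals $d_1(x,y)$ (taking $z = x$ or $z = y$).

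First I would verify that each $d_{2,n}$, suitably truncated, is a genuine definable pseudo-metric: symmetry is clear from the symmetric form of the defining expression, the value at $(x,x)$ is $0$ since one may take $z = x$ and $\psi_1(x,x) = d_1(x,x) = 0$ on — wait, $\psi_1(x,x)$ need not vanish off $X$; this is exactly why one works with a definable pseudo-metric rather than hoping for $\psi_1(x,x)=0$ globally. So I would instead first replace $\psi_1$ by $\tilde\psi_1(x,y) = \sup_z |\psi_1(x,z) - \psi_1(z,y)|$ as in the proof of \fref{prp:DefMetExt}, obtaining a definable pseudo-metric that still extends $d_1$ on $X$ (since $d_1$ is a pseudo-metric there, the same computation as in \fref{prp:DefMetExt} shows $\tilde\psi_1\rest_{X^2} = d_1$); then build $d_{2,n}$ from $\tilde\psi_1$. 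The triangle inequality for the infimum-of-paths construction is the standard ``shortcut metric'' argument, and the extra $\chi$ term is added \emph{inside} the infimum over the path midpoint, so it does not disrupt subadditivity.

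Next I would check the two convergence properties. Monotonicity: as $n$ increases the coefficient $N_n$ on $\chi(z)$ increases, so the infimand increases pointwise and hence $d_{2,n}$ increases; to get a \emph{decreasing} sequence I would instead arrange the penalty so larger $n$ penalizes \emph{less}, i.e.\ take $d_{2,n}(x,y) = \inf_z \bigl(\tilde\psi_1(x,z) + \tilde\psi_1(z,y) + 2^{-n}\,{}^{-1}\!\cdot\chi(z)\bigr) \wedge 1$ — more carefully, use coefficient $2^n$ but note $d_{2,n} \geq d_{2,n+1}$ fails that way, so the clean choice is: let the penalty coefficient be $c_n$ with $c_n \uparrow \infty$ and observe this gives an \emph{increasing} sequence converging up to $d_1$; to get the decreasing sequence demanded by the statement, I take instead $d_{2,n} := d_{2,\infty} \vee (\text{something})$ — the honest route is to use $c_n \downarrow$ is wrong too. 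The correct fix: define $e_n(x,y) = \inf_z(\tilde\psi_1(x,z)+\tilde\psi_1(z,y) + c\cdot\chi(z))$ for a single large $c$, giving one pseudo-metric $e$ extending $d_1$ and agreeing with $d_1$ on $X$; then the decreasing approximating sequence is simply $d_{2,n} = e \wedge (d_1\text{-extension})$ — but we have no $d_1$-extension metric, that's the whole problem. So the genuinely correct construction is: for each $n$, let $d_{2,n}(x,y) = \inf_{z}\bigl(\tilde\psi_1(x,z) + \tilde\psi_1(z,y)\bigr) \vee$ nothing; instead put the truncation outside: $d_{2,n} = \bigl(2^n \cdot \inf_z(\tilde\psi_1(x,z)+\tilde\psi_1(z,y)+\chi(z))\bigr)\wedge (\text{diam bound})$, which for fixed $x,y$ decreases in — no. The cleanest working version, which I would write up: set $d_{2,n}(x,y) = \inf_z \bigl( \tilde\psi_1(x,z) + \tilde\psi_1(z,y) + n\cdot \chi(z) \cdot \chi(z) \bigr)$ is increasing; so reverse by defining $d_{2,n}$ for the \emph{co-sequence} — i.e.\ I prove the increasing version and then remark that negating/reflecting gives decreasing, OR I simply observe that $\{d_{2,n} \vee d_1\text{-on-}X\}$... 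Let me instead just use: $d_{2,n}(x,y) = \inf_z\bigl(\tilde\psi_1(x,z)+\tilde\psi_1(z,y)+\chi(z)\bigr) + 2^{-n}\cdot\rho(x,y)$ where $\rho$ is any fixed total definable pseudo-metric extending — again circular.

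**The honest plan.** Given the tangle above, the main obstacle is precisely \emph{reconciling the decreasing requirement with the penalty construction}. The resolution I would actually carry out: first produce, for each $n$, a pseudo-metric $\delta_n$ that is an upper bound for $d_1$ on $X$ and lies within $2^{-n}$ of $d_1$ uniformly on $X$, by the shortcut construction $\delta_n(x,y) = \inf_z\bigl(\tilde\psi_1(x,z) + \tilde\psi_1(z,y) + a_n\chi(z)\bigr)$ with $a_n$ chosen (by a compactness argument, using that $X$ is the zero set of $\chi$ and $\tilde\psi_1$ is uniformly continuous) large enough that $\delta_n\rest_{X^2} = d_1$ and $\delta_n \leq d_1 + 2^{-n}$ — wait, on $X$ it already equals $d_1$, so I want the bound $\delta_n \leq \delta_{n-1}$, which I get by taking $a_n \geq a_{n-1}$: larger penalty makes the inf over $z$ larger, so $\delta_n \geq \delta_{n-1}$, the wrong direction again. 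Therefore: take $a_n \downarrow$, giving $\delta_n$ decreasing, and check that $\delta_n \to d_1$ on $X$ still holds — on $X^2$, $\delta_n(x,y) \leq \tilde\psi_1(x,x) + \tilde\psi_1(x,y) = d_1(x,y)$ and $\delta_n(x,y) \geq$ ? Here with small $a_n$ the penalty is weak so $\delta_n$ could dip below $d_1$ even on $X$; but $\tilde\psi_1$ being a pseudo-metric gives $\tilde\psi_1(x,z)+\tilde\psi_1(z,y) \geq \tilde\psi_1(x,y) = d_1(x,y)$ for \emph{all} $z$, hence $\delta_n(x,y) \geq d_1(x,y)$ on $X^2$ regardless of $a_n$. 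So $\delta_n\rest_{X^2} = d_1$ exactly, for \emph{every} $n$, and $\{\delta_n\}$ is decreasing when $\{a_n\}$ is. It remains to show $\delta_n \downarrow d_1$ \emph{uniformly} on $X$ — but it's constantly $d_1$ on $X$! So uniform convergence on $X$ is trivial. Thus the sequence $\delta_n$ works provided I also confirm each $\delta_n$ is a bona fide definable pseudo-metric: definability is clear ($\inf_z$ of a definable predicate), symmetry by construction, vanishing on the diagonal by $z := x$, triangle inequality by the standard path-concatenation estimate. I expect the triangle inequality check and the verification that $\tilde\psi_1$ (hence each $\delta_n$) restricts to exactly $d_1$ on $X$ to be the only non-formal points; both follow the pattern already established in \fref{prp:DefMetExt} and \fref{lem:EquivDefMet}.
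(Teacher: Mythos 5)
There is a genuine gap, and it sits at the one step you wave through by analogy: the claim that $\tilde\psi_1(x,y) = \sup_z |\psi_1(x,z)-\psi_1(z,y)|$ is a total definable pseudo-metric \emph{restricting to $d_1$ on $X$}. The computation you invoke from \fref{prp:DefMetExt} uses $\sup_{z\in X}$, and quantification over $X$ is exactly what is unavailable here: $X$ is only a zero set, not a definable set. With the unrestricted $\sup_z$, the reverse triangle inequality bounds the supremand by $d_1(x,y)$ only for $z\in X$; for $z\notin X$ the values $\psi_1(x,z)$ are those of an arbitrary extension, and $|\psi_1(x,z)-\psi_1(y,z)|$ can exceed $d_1(x,y)$. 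So all one gets is $\tilde\psi_1\rest_{X^2}\geq d_1$, with no reason for equality. If your claim were true, the lemma would be immediate (take $d_{2,n}=\tilde\psi_1$ for every $n$) and it would also settle, affirmatively, the open Question stated right after the lemma --- a strong sign that something is being obtained for free. The error then propagates fatally into your final construction: since $\tilde\psi_1$ is already a pseudo-metric, for any nonnegative penalty one has
\begin{gather*}
  \tilde\psi_1(x,y)\ \leq\ \inf_z\bigl(\tilde\psi_1(x,z)+\tilde\psi_1(z,y)+a_n\chi(z)\bigr)\ \leq\ \tilde\psi_1(x,y)+a_n\bigl(\chi(x)\wedge\chi(y)\bigr),
\end{gather*}
so on $X^2$ every $\delta_n$ equals $\tilde\psi_1$, not $d_1$: the sequence is constant on $X$ and converges to the wrong limit. (Off $X$ the same computation shows $\delta_n(x,x)$ can be as large as $a_n\chi(x)>0$, so reflexivity on the diagonal can fail as well, though that is secondary.)

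The missing idea is a device that makes a Kuratowski-type $\sup_z$ construction see only those $z$ lying close to $X$. The paper achieves this by damping rather than penalising: with $\varphi_n = 1\dotminus 2^n\varphi$ (equal to $1$ on $X$ and to $0$ where $\varphi\geq 2^{-n}$) it sets $d_{2,n}(x,y)=\sup_z\bigl|\varphi_n(z)\wedge\psi_1(x,z)-\varphi_n(z)\wedge\psi_1(y,z)\bigr|$. Monotonicity of truncation gives the decreasing property, taking $z=x$ gives $d_{2,n}\geq d_1$ on $X^2$, and a compactness argument (any witnesses to non-convergence must have $\varphi(c_n)\leq 2^{-n}$, hence accumulate inside $X$, contradicting the triangle inequality for $d_1$) gives uniform convergence on $X$ from above; note that no finite stage need restrict exactly to $d_1$, which is precisely why the statement asks only for a uniformly convergent sequence. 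Your inf-over-midpoints idea would have to be iterated over chains of unbounded length to recover the triangle inequality from a non-pseudo-metric base, and that is not obviously a definable predicate; as carried out over a base that is already a pseudo-metric, it is vacuous.
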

\begin{proof}
  Let $X$ be the zero set of $\varphi$, and let
  $\psi_1(x,y) \sqsupseteq d_1(x,y)$.
  For $n < \omega$ let $\varphi_n(x) = 1 \dotminus 2^n\varphi(x)$: thus
  $\varphi(x) = 0 \Longrightarrow \varphi_n(x) = 1$, and
  $\varphi(x) \geq 2^{-n} \Longrightarrow \varphi_n(x) = 0$.
  Now let:
  \begin{gather*}
    d_{2,n}(x,y)
    = \sup_z \bigl|
      \varphi_n(z)\wedge\psi_1(x,z) - \varphi_n(z)\wedge\psi_1(y,z)
    \bigr|.
  \end{gather*}
  Clearly $d_{2,n}$ is a definable
  pseudo-metric for every $n$ and $d_{2,n} \geq d_{2,n+1}$.
  In addition, if $x,y \in X$ then choosing $z = x$ we see that
  $d_{2,n}(x,y) \geq d_1(x,y)$.

  Assume towards a contradiction that the pseudo-metrics
  $d_{2,n}$ do not converge uniformly to $d_1$ on $X$.
  Then there is some $\varepsilon > 0$ such that for all
  $n < \omega$ there are $a_n,b_n \in X$ and $c_n$ such that
  $|\varphi_n(c_n)\wedge\psi_1(a_n,c_n) - \varphi_n(c_n)\wedge\psi_1(b_n,c_n)|
  \geq d_1(a_n,b_n) + \varepsilon$.
  This implies in particular that
  $\varphi(c_n) \leq 2^{-n}$ and
  $|\psi_1(a_n,c_n) - \psi_1(b_n,c_n)| \geq d_1(a_n,b_n) + \varepsilon$.
  By compactness there are $a,b,c \in X$ such that 
  $|\psi_1(a,c) - \psi_1(b,c)| \geq d_1(a,b) + \varepsilon$, which is impossible as
  $\psi_1$ agrees with $d_1$ on $X$ and $d_1$ is a pseudo-metric.
\end{proof}

\begin{qst}
  Let $X$ be a zero set, $d_1$ a definable pseudo-metric on $X$.
  Can $d_1$ always be extended to a total definable pseudo-metric?
  Are there additional assumptions on the theory (e.g., stability or
  $\aleph_0$-stability) under which this is true?
\end{qst}

As it turns out, \fref{lem:ApproxPsMetExt} is too weak to be
useful for our purposes.
Having failed to solve the full metric extension problem,
we shall seek to solve weaker version thereof:
given a type-definable set $X$ equipped with a definable
metric $d_1$, we shall look for a total definable metric $d_2$ which
does not necessarily extend $d_1$, but which does preserve whatever
invariance properties $d_1$ may have.

For this purpose we shall use a few technical results from
\cite[Section~2.3]{BenYaacov:Morley}.
There we proved that in a Hausdorff cat in which there are not too
many ways for two elements to be distinct (technically: the
\emph{distance cofinality} is at most countable), there is a
definable metric.
For this purpose one first constructs a symmetric definable predicate
$\varphi(x,y)$ (a definable function in the terminology of
\cite{BenYaacov:Morley}, and denoted there by $h(x,y)$)
satisfying $\varphi(x,y) = 0 \Longleftrightarrow x=y$.
Then one needs to replace $\varphi$ with one which also satisfies the
triangle inequality.

In the context of an \emph{open} Hausdorff cat (which is essentially
the same
thing as a theory in continuous first order logic), one can just define
$d(x,y) = \sup_z |\varphi(x,z)-\varphi(y,z)|$, which is definable metric.
In the general case the metric
$\sup_z |\varphi(x,z)-\varphi(y,z)|$ need not be definable,
and a more complicated construction is required to extract
a definable metric from $\varphi$ directly without recourse to
quantification.
The same tools apply here.

\begin{fct}
  \label{fct:MetModConstr}
  Let $g\colon [0,1]^2 \to [0,1]$ be symmetric, non decreasing, and satisfy
  for all $u,w,t \in [0,1]$: $g(0,t) = t$ and if
  $g(u,w) < t$
  then there is $u < v \leq 1$ such that $g(v,w) < t$.

  Then there is a function $f\colon D \to [0,1]$,
  where $D = \{k2^{-n}\colon n < \omega, 0 \leq k \leq 2^n\}$
  denotes the set of all
  dyadic fractions in $[0,1]$, such that:
  \begin{enumerate}
  \item $f$ is strictly increasing.
  \item $f \leq \id_D$.
  \item For every $t,u \in D\cap(0,1]$: \qquad
    $t+u \leq 1 \Longrightarrow g(f(t),f(u)) < f(t+u)$.
  \end{enumerate}
\end{fct}
\begin{proof}
  This is \cite[Lemma~2.19]{BenYaacov:Morley}, with the sole difference
  being that we require $f(t) \leq t$ for all $t \in D$ rather than only
  for $t = \dyad{1}$.
  The only modification in the proof is in the construction of
  $f(\dyad{k})$ for odd $3 \leq k < 2^n$.
  There we have $f(\dyad{k-1}) \leq \dyad{k-1} < \dyad{k}$ by the
  induction hypothesis, and
  $f(\dyad{k-1}) < \min\{s',f(\dyad{k+1})\}$ as in the original
  proof, so we can choose $f(\dyad{k})$ such that
  $f(\dyad{k-1}) < f(\dyad{k}) < \min\{s',f(\dyad{k+1}),\dyad{k}\}$.
\end{proof}

The following result was implicit in \cite[Section~2.3]{BenYaacov:Morley}
(modulo the slight improvement above to the requirement on $f$):
\begin{prp}
  \label{prp:MetMod}
  Let $\varphi(x,y)$ be a symmetric and reflexive definable predicate,
  by which we mean that $\varphi(x,y) = \varphi(y,x)$ and $\varphi(x,x) = 0$.
  Then there is a continuous increasing function $h\colon[0,1] \to [0,1]$,
  satisfying $h(0) = 0$ and $h(x) \geq x$, such that
  $h \circ \varphi(x,y)$ is a definable pseudo-metric.
  If in addition $x\neq y \Longrightarrow \varphi(x,y) > 0$ then
  $h \circ \varphi(x,y)$ is a definable metric.
\end{prp}
\begin{proof}
  Define $g\colon [0,1]^2 \to [0,1]$ by:
  \begin{gather*}
    g(t,u) = \sup\{\varphi(x,y)\colon \exists z\, \varphi(x,z)\leq t\wedge \varphi(y,z)\leq u\}.
  \end{gather*}
  As in the Claim following the proof of
  \cite[Lemma~2.19]{BenYaacov:Morley}, $g$ satisfies the assumptions of
  \fref{fct:MetModConstr}.
  Thus there is a function $f\colon D \to [0,1]$ such that:
  \begin{enumerate}
  \item $f$ is strictly increasing.
  \item $f \leq \id_D$.
  \item For every $t,u \in D\cap(0,1]$:
    \begin{gather*}
      t+u \leq 1 \Longrightarrow g(f(t),f(u)) < f(t+u)
    \end{gather*}
  \end{enumerate}
  We define $h\colon [0,1] \to [0,1]$ to be a weak inverse of $f$:
  $h(t) = \sup\{u \in D\colon f(u) < t\}$.
  As $f$ is strictly increasing it follows that
  $h(t) = \inf\{u \in D\colon f(u) > t\}$, and that $h$ is continuous and
  weakly increasing.
  In addition, $f \leq \id_D \Longrightarrow h \geq \id_{[0,1]}$.

  Let $\psi(x,y) = h \circ \varphi(x,y)$, which is a definable predicate by
  continuity of $h$.
  Clearly $\psi$ is reflexive and symmetric as well, and it is left to
  show that it satisfies the triangle inequality.
  Indeed, assume not, that is
  $\psi(x,y) > \psi(x,z) + \psi(y,z) + \varepsilon$ for some
  $x,y,z$.
  As $\psi(x,z) + \varepsilon/2 > \psi(x,z) = h\circ\varphi(x,z)$,
  there is $u \in D$, $u < \psi(x,z) + \varepsilon/2$, such that
  $f(u) > \varphi(x,z)$.
  Similarly there is $w < \psi(y,z) + \varepsilon/2$ in $D$ such that
  $f(w) > \varphi(y,z)$.
  As $u+w < \psi(x,y) \leq 1$ we have:
  \begin{gather*}
    \varphi(x,y)
    \leq g(\varphi(x,z),\varphi(y,z))
    \leq g(f(u),f(w))
    < f(u+w) < f(\psi(x,y)).
  \end{gather*}
  In other words we have $\varphi(x,y) < (f\circ h)(\varphi(x,y))$,
  contradicting
  the definition of $h$.

  We have shown that $h\circ\varphi(x,y)$ is a pseudo-metric.
  If $\varphi$ satisfies $\varphi(x,y) = 0 \Longrightarrow x=y$, so does
  $h\circ\varphi$ (as $h \geq \id$), which is therefore a metric.
\end{proof}

Putting it all together we get:
\begin{thm}
  \label{thm:MetModExt}
  Let $X$ be a type-definable set, $d_1$ a partial definable
  (pseudo-)metric on $X$.
  Then there exist a total definable
  (pseudo-)metric $d_2$ and a continuous
  increasing function $h\colon [0,1] \to [0,1]$ such that
  $h(0) = 0$, $h \geq \id$ and $d_2 \sqsupseteq h \circ d_1$.
\end{thm}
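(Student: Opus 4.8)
The plan is to assemble the pieces already developed in this section. Given a type-definable set $X$ with a partial definable (pseudo-)metric $d_1$, the strategy is: first reduce to a zero set, then produce a symmetric reflexive definable predicate separating points (in the metric case), then apply the modification function construction to enforce the triangle inequality, and finally—if $X$ happens to be definable—recombine with a pseudo-metric that vanishes exactly on $X$ to restore the metric property off of $X$.

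Concretely, first I would invoke \fref{lem:ZeroSetMetExt} to find a zero set $Y \supseteq X$ such that $d_1$ extends to a definable (pseudo-)metric on $Y$; call this extension $d_1'$, with zero set $Y$ being the zero set of some $\varphi(x)$. By \fref{prp:DefSetParam}/\fref{fct:DefSet} style arguments we have a definable predicate $\psi_1 \sqsupseteq d_1'$. Next, form a symmetric reflexive definable predicate on the whole home sort that agrees (after composition with an increasing $h$) with $d_1'$ on $Y$: a natural candidate is something like $\chi(x,y) = \psi_1(x,y) \vee d_1^{(Y)}$ where we use \fref{lem:PsMetToMet} to obtain a definable pseudo-metric $e(x,y)$ that is $0$ on $Y$ and positive off $Y$, so that $\chi(x,y) = \psi_1(x,y)$ on $Y^2$ while $\chi$ remains symmetric and reflexive globally and, in the metric case, satisfies $x \neq y \Longrightarrow \chi(x,y) > 0$ provided we also join in the standard metric $d(x,y)$. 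Then apply \fref{prp:MetMod} to $\chi$: this yields a continuous increasing $h$ with $h(0)=0$, $h \geq \id$, such that $h \circ \chi$ is a definable pseudo-metric (a metric if $\chi$ separates points). Since $h \circ \chi$ restricted to $X^2$ equals $h \circ d_1$ (because $\chi = d_1$ there), setting $d_2 = h \circ \chi$ gives $d_2 \sqsupseteq h \circ d_1$, as required.

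The one delicate point is handling the pseudo-metric versus metric dichotomy uniformly, and making sure that in the metric case the global predicate $\chi$ really does satisfy $x \neq y \Longrightarrow \chi(x,y) > 0$ without destroying agreement with $d_1$ on $X$. This is exactly the role of \fref{lem:PsMetToMet}: the pseudo-metric $e$ from that lemma is $0$ on $Y$ and strictly positive whenever $x \neq y$ and $x \notin Y$, so $\chi = \psi_1 \vee e \vee (\text{something vanishing off the diagonal})$—more carefully, one checks that on $Y$ the metric property of $d_1$ already gives separation, and off $Y$ the term $e$ gives it, while the join of pseudo-metrics is a pseudo-metric by \fref{lem:PsMetJoin} so reflexivity and symmetry are preserved. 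I would expect the main obstacle to be bookkeeping: verifying that after composing with $h$ the resulting predicate still agrees with $h \circ d_1$ on $X$ (not merely on $Y$) and that all the ``$\sqsupseteq$'' relations—i.e., the fact that these are genuine definable predicates extending partial ones—are correctly tracked through the join and the composition with $h$. None of this requires new ideas; it is a matter of threading \fref{lem:ZeroSetMetExt}, \fref{lem:PsMetToMet}, \fref{lem:PsMetJoin}, and \fref{prp:MetMod} together in the right order.

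Thus the proof reads: apply \fref{lem:ZeroSetMetExt} to reduce to a zero set $Y \supseteq X$ carrying a definable (pseudo-)metric extending $d_1$; fix a definable predicate $\psi_1 \sqsupseteq$ that extension; in the metric case use \fref{lem:PsMetToMet} to adjust $\psi_1$ to a symmetric reflexive definable predicate $\chi$ that still agrees with $d_1$ on $X$ and additionally satisfies $x \ne y \Rightarrow \chi(x,y) > 0$ (in the pseudo-metric case just take $\chi = \psi_1$ symmetrized, using that $\psi_1(x,x)=0$ on $Y$ can be arranged or that the construction of \fref{prp:MetMod} only needs reflexivity on the relevant set); then apply \fref{prp:MetMod} to $\chi$ to obtain $h$ and the definable (pseudo-)metric $d_2 = h \circ \chi$, noting $d_2 \sqsupseteq h \circ d_1$ since $d_2$ restricted to $X^2$ is $h \circ d_1$.
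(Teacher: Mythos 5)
Your proposal follows essentially the same route as the paper: reduce to a zero set via \fref{lem:ZeroSetMetExt}, replace an arbitrary extension $\psi_1 \sqsupseteq d_1$ by a globally symmetric and reflexive definable predicate that still agrees with $d_1$ on $X^2$ and, in the metric case, separates points off $X$ by joining in a correction term vanishing on $X^2$, then apply \fref{prp:MetMod} and set $d_2 = h\circ(\text{corrected predicate})$. The paper's only differences are in the explicit choices --- it uses the symmetrisation $\psi_2(x,y) = (\psi_1(x,y)\wedge\psi_1(y,x)) \dotminus \psi_1(x,x) \dotminus \psi_1(y,y)$ and the truncated distance $d(x,y)\wedge(\varphi(x)\vee\varphi(y))$ rather than the pseudo-metric of \fref{lem:PsMetToMet} --- and two of your side remarks should be discarded: joining in the full standard metric $d(x,y)$ would destroy agreement with $d_1$ on $X$, and \fref{prp:MetMod} does require symmetry and reflexivity globally, not merely on the relevant set, so the explicit symmetrisation step is genuinely needed.
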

\begin{proof}
  First, by \fref{lem:ZeroSetMetExt} we may assume $X$ is a zero set,
  defined by $\varphi(x) = 0$.
  Let $\psi_1 \sqsupseteq d_1$, and define:
  \begin{align*}
    \psi_2(x,y) & = (\psi_1(x,y)\wedge\psi_1(y,x))
    \dotminus \psi_1(x,x) \dotminus \psi_1(y,y), \\
    \psi_3(x,y) & = d(x,y) \wedge (\varphi(x)\vee\varphi(y)), \\
    \psi_4(x,y) & = \psi_2 \vee \psi_3.
  \end{align*}
  Then $\psi_2 \sqsupseteq d_1$ as well and $\psi_3\rest_{X^2}$ is zero, whereby
  $\psi_4 \sqsupseteq d_1$.
  In addition $\psi_2$, $\psi_3$ and $\psi_4$ are all symmetric and reflexive,
  and we may apply \fref{prp:MetMod} to find $h$ as such that
  $d_2 = h \circ \psi_4$ is a total pseudo-metric.

  Assume now that $d_1$ is a metric.
  Then for $x\neq y$,
  if $x,y \in X$ then $\psi_2(x,y) = d_1(x,y) \neq 0$,
  and otherwise $\psi_3(x,y) \neq 0$, so either way $\psi_4(x,y) \neq 0$.
  Thus $d_2$ is a total metric.
\end{proof}

\subsection{Existence of invariant metrics}

In general type-definable groups need not be invariant.
For definable groups this is easily resolved:
\begin{prp}
  \label{prp:DefGrpInvMet}
  Let $\langle G,\cdot\rangle$ be a definable group.
  Then $G$ admits a total definable invariant metric
  (i.e., a total definable metric on the sort of $G$ which is
  invariant on $G$).
\end{prp}
\begin{proof}
  For simplicity assume $G$ is definable in the home sort.
  Since $G$ is a definable set, the predicate $d(x,G)$ is definable,
  and we may quantify over members of $G$.
  Let:
  \begin{align*}
    \psi_0(u,v,x,y) & \sqsupseteq d(uxv,uyv) \\
    \psi_1(x,y) & = \sup_{u,v \in G} \psi_0(u,v,x,y) \\
    d_1(x,y) & = \psi_1(x,y)\rest_{x,y \in G}
  \end{align*}
  This means that for $x,y \in G$ we have:
  $d_1(x,y) = \sup_{u,v \in G} d(uxv,uyv)$.
  This is easily verified to define an invariant metric on
  $G$, which can be extended to a total definable metric by
  \fref{prp:DefMetExt}.
\end{proof}

In the case of a type-definable group we only have partial results.
First, as before we can reduce any problem to the case of zero sets:

\begin{lem}
  \label{lem:GrpZeroSet}
  Let $G$ be a type-definable group.
  Then there exists a type-definable supergroup $G' \geq G$ whose domain
  is a zero set.
  Moreover, if $d_1$ is an invariant definable metric on $G$, and
  $\psi_1 \sqsupseteq d_1$,
  we can choose $G'$ so that $d_2 = \psi_1\rest_{G'}$ is an
  invariant metric on $G'$.
\end{lem}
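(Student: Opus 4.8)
The plan is: extend the group law and inverse to a zero set, closed under them, via Proposition~\ref{prp:PartFuncClosedZeroSetExt}; then cut that zero set down --- using Lemma~\ref{lem:ZeroSetByCoords} and Lemma~\ref{lem:ZeroSetImplication} --- to one on which the group axioms (and, for the second assertion, the metric axioms and the invariance of $\psi_1$) genuinely hold; and finally apply Proposition~\ref{prp:PartFuncClosedZeroSetExt} once more to recover closure without spoiling anything.

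First fix total definable predicates $\varphi_\cdot(x,y,z) \sqsupseteq d(xy,z)$ and $\varphi_{-1}(x,y) \sqsupseteq d(x^{-1},y)$ coding multiplication and inverse on $G$. Applying Proposition~\ref{prp:PartFuncClosedZeroSetExt} to $\cdot\colon G^2\to G$ and $x\mapsto x^{-1}\colon G\to G$ with these prescribed predicates produces a zero set $G_0\supseteq G$ and partial definable functions $\tilde\cdot\colon G_0^2\to G_0$, $\mathrm{inv}\colon G_0\to G_0$ extending the originals, whose (super)graphs are still cut out by $\varphi_\cdot$ and $\varphi_{-1}$. Since $G_0$ is closed under these operations, the terms $(x\tilde\cdot y)\tilde\cdot z$, $x\tilde\cdot(y\tilde\cdot z)$, $e_G\tilde\cdot x$, $x\tilde\cdot e_G$, $x\tilde\cdot\mathrm{inv}(x)$, $\mathrm{inv}(x)\tilde\cdot x$ are all partial definable functions into $G_0$, so associativity, the two identity laws for $e_G\in\dcl(A)$, and the two inverse laws become conditions of the form ``$\chi=0$'' on finite powers of $G_0$, each holding on the corresponding power of $G$. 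Likewise: symmetry and reflexivity of $\psi_1$, the triangle inequality for $\psi_1$, invariance of $\psi_1$ under left and right $\tilde\cdot$-translation by a third variable, and the implication $\psi_1(x,y)=0\Rightarrow d(x,y)=0$, are all such conditions on powers of $G_0$ holding on the matching power of $G$.

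Now shrink. For each equality ``$\chi=0$ on $G_0^k$'' above: $\chi$ extends to a total definable predicate and hence has a zero set, whose intersection with the zero set $G_0^k$ is a zero set containing $G^k$, so Lemma~\ref{lem:ZeroSetByCoords} yields a zero set containing $G$ and contained in $G_0$ on whose $k$-th power $\chi$ vanishes; the lone implication is treated by Lemma~\ref{lem:ZeroSetImplication} instead. Intersecting these finitely many zero sets gives a zero set $G_1$ with $G\subseteq G_1\subseteq G_0$ on which all the listed group, metric and invariance conditions hold. Finally, apply Proposition~\ref{prp:PartFuncClosedZeroSetExt} a second time, again to $\cdot$ and inverse with the same $\varphi_\cdot,\varphi_{-1}$ but now inside $G_1$, obtaining a zero set $G'$ with $G\subseteq G'\subseteq G_1$ closed under the resulting extensions. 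As those extensions are again the functions on $G'$ with graphs cut out by $\varphi_\cdot,\varphi_{-1}$, they coincide on $G'$ with $\tilde\cdot$ and $\mathrm{inv}$; and since $G'\subseteq G_1$, every condition imposed in the previous step still holds on $G'$. Thus $\langle G',\tilde\cdot\rangle$ is a type-definable group with identity $e_G$, inverse $\mathrm{inv}$, and domain the zero set $G'$, containing $G$ as a subgroup; and when $d_1$ is an invariant metric, $\psi_1\rest_{G'^2}$ is a metric (symmetric, reflexive, satisfying the triangle inequality, with $\psi_1(x,y)=0$ forcing $d(x,y)=0$, i.e. $x=y$) invariant under left and right translation by elements of $G'$.

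The main obstacle is the circularity between the two features we want: the group and metric axioms can only be phrased as definable conditions once the operations map the ambient zero set into itself, yet imposing those axioms by passing to a subset a priori destroys that closure. This is resolved by fixing $\varphi_\cdot,\varphi_{-1}$ in advance: an extension of $\cdot$ whose graph is cut out by $\varphi_\cdot$ is determined on any zero set on which $\varphi_\cdot$ still defines a function, so re-closing with Proposition~\ref{prp:PartFuncClosedZeroSetExt} after the shrinking step changes nothing on the smaller set --- the two applications can be stacked rather than iterated transfinitely.
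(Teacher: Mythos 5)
Your proposal is correct and follows essentially the same route as the paper: one application of Proposition~\ref{prp:PartFuncClosedZeroSetExt} to obtain a closed zero-set extension of the operations, a shrinking step via Lemma~\ref{lem:ZeroSetByCoords} (and Lemma~\ref{lem:ZeroSetImplication} for the metric separation axiom) to impose the group, metric and invariance identities, and a second application of Proposition~\ref{prp:PartFuncClosedZeroSetExt} to restore closure. Your explicit observation that fixing $\varphi_\cdot$ and $\varphi_{-1}$ in advance forces the re-closed operations to agree with the earlier extensions --- so the second closure step cannot disturb the identities already imposed --- is exactly the point the paper leaves implicit.
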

\begin{proof}
  Using \fref{prp:PartFuncClosedZeroSetExt}
  we find a zero set $X_0 \supseteq G$ to which the law of $G$ and the
  inverse mappings extend to definable mappings $\cdot'\colon X_0^2 \to X_0$ and
  $^{-1\prime}\colon X_0 \to X_0$.
  The satisfaction of the identity $(x\cdot'y)\cdot'z = x\cdot'(y\cdot'z)$
  (i.e., $d((x\cdot'y)\cdot'z,x\cdot'(y\cdot'z)) = 0$) in $X_0$
  translates a condition of the form $\varphi_{ass}(x,y,z) = 0$.
  By \fref{lem:ZeroSetByCoords} there
  is a zero set $X_1$ such that $X_0 \supseteq X_1 \supseteq G$ and such that
  $\varphi_{ass}$ is zero on $X_1^3$.
  We take care of $x\cdot'e = x$ and $x\cdot'x^{-1\prime} = e$ similarly.
  We end up with a zero set $X_0 \supseteq X_2 \supseteq G$ on which all the
  identities above hold.
  Now $X_2$ needs not be closed under product and inverse, but that is
  taken care of by a second application of
  \fref{prp:PartFuncClosedZeroSetExt}.
\end{proof}

\begin{rmk}
  This argument would work for any kind of functional structure
  defined by a set of identities (e.g., rings).
  Adding \fref{lem:ZeroSetImplication} we can further extend the
  argument to structures whose definition involves an implication,
  such as integral domains.
  We have already seen an example of that in
  \fref{lem:ZeroSetMetExt}.
\end{rmk}

Alternatively, given a partial invariant definable metric on $G$ we
can extend it to a total definable metric up to some uniform
modification which preserves the invariance.

\begin{prp}
  \label{prp:GrpInvMetModExt}
  Let $G$ be a type-definable group admitting a partial invariant
  definable metric.
  Then it also admits a total one.
\end{prp}
\begin{proof}
  Just apply \fref{thm:MetModExt} to the partial invariant metric.
\end{proof}

In a stable theory we obtain a partial invariant metric via generic
translations.
We first recall a few facts regarding generic elements and types in
stable groups:

\begin{dfn}
  Let $G$ be a type-definable group in a stable theory,
  say over a parameter set $A$.
  \begin{enumerate}
  \item 
    We say that $G$ is \emph{connected} if it has no proper
    type-definable subgroups of bounded index (over any parameters).
  \item Let $B \supseteq A$.
    An element $g \in G$ is called \emph{generic}
    over $B$ if for every $h \in G$:
    \begin{gather*}
      g \ind_B h \quad \Longrightarrow \quad gh \ind_A B,h.
    \end{gather*}
    (This notion is sometimes called \emph{dividing-generic} and is also
    useful sense in simple theories.
    In stable theories it coincides with other notions of genericity.)
  \item Let $\tS_G(B) = [x \in G] \subseteq \tS_x(B)$,
    namely the set of complete types over $B$ of members of
    $G$.
    A type $p \in \tS_G(B)$ is generic if its realisations are.
  \end{enumerate}
\end{dfn}

\begin{fct}
  \label{fct:Geenric}
  Let $G$ be a type-definable group over a set $A$
  in a stable theory,
  and let $B \supseteq A$.
  \begin{enumerate}
  \item Generic elements over $B$ exist.
  \item An element $g$ is generic over $B$ if and only if
    $g^{-1}$ is.
    It follows that if $g$ is generic over $B$ and
    $g \ind_B h$ then $hg \ind_A h,B$ as well.
  \item An element $g \in G$ is generic over $B$
    if and only if it is generic over $A$ and $g \ind_A B$.
  \item If $g,h \in G$, $g$ is generic over $B$ and
    $g \ind_B h$ then $gh$ and $hg$ are both generic over $B$.
  \end{enumerate}
\end{fct}
\begin{proof}
  See \cite[Theorem~6.10]{BenYaacov:StableGroups}.
  (In fact, this holds in the more general setting of a type-definable
  group in a thick simple cat, see
  \cite[Section~1.3]{BenYaacov:ThicknessAndCatTSF}.)
\end{proof}

\begin{fct}
  \label{fct:ConnComp}
  Let $G$ be a type-definable group with parameters in some set $A$
  in a stable theory.
  Then $G$ admits a smallest type-definable subgroup of bounded index
  $G^0$, called the \emph{connected component} of $G$.
  It has the following properties:
  \begin{enumerate}
  \item The connected component $G^0$ is a connected normal subgroup of
    $G$, and is type-definable over $A$ as well.
  \item If $G$ is connected the it admits a unique generic type
    over $A$, which is in addition stationary.
    It follows that $G$ admits a unique generic type over every set
    $B \supseteq A$, namely $p\rest^B$, the unique non forking
    extension of $p$ to $B$.
  \end{enumerate}
\end{fct}
\begin{proof}
  See \cite[Theorem~6.14]{BenYaacov:StableGroups}.
\end{proof}

\begin{thm}
  \label{thm:StabConnGrpInvMet}
  Assume $G$ is a connected type-definable group in a stable theory.
  Then there exists a total metric which is invariant on $G$.
\end{thm}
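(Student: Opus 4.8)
The plan is to construct a partial definable metric on $G$ which is invariant, and then invoke \fref{prp:GrpInvMetModExt} to obtain the total one. By \fref{fct:ConnComp}, since $G$ is connected it has a unique generic type $p$ over the base parameters $A$, which is stationary, and over any $B \supseteq A$ the unique generic type is the non forking extension $p\rest^B$. The idea is that we can measure the distance between two elements $x,y \in G$ by how much a generic translate (on one side) of the pair $x,y$ is moved — but of course, to get something \emph{definable} we should average or take a supremum in a way that quantification over the standard distance predicate already makes available. The cleanest candidate: pick a generic $g$ over $Axy$ and set $d_1(x,y) = d(gx, gy)$. One must check this is well defined independently of the choice of generic $g$ (which follows from stationarity of $p$ and the fact that any two generics over $Axy$ have the same type over $Axy$), left-invariant (immediate, since replacing $x,y$ by $hx,hy$ for $h\in G$ just changes which generic we use — if $g$ is generic over $Axyh$ then $gh^{-1}$ is generic over $Axy$, by \fref{fct:Geenric}), and symmetric plus reflexive plus triangle-inequality (all inherited from the standard metric $d$). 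One also needs right-invariance; but since $g x v$ for $v \in G$ — hmm, right translation is trickier, so probably one takes $d_1(x,y) = d(gxh, gyh)$ for $g,h$ a pair of generics chosen suitably independent, or one first gets a left-invariant metric and separately a right-invariant one and combines them. Actually the slicker route: since we only need \emph{some} invariant metric, take $d_1(x,y)$ to be $d(gx,gy)$ for $g$ generic, observe left-invariance, then note $d_1(x^{-1},y^{-1})$ versus $d_1(x,y)$ and use that the inverse of a generic is generic (\fref{fct:Geenric}) to get inverse-invariance, and then left-invariance plus inverse-invariance gives right-invariance, hence full invariance, exactly as in the remark following the definition of invariant metric in the excerpt.

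The next step is to verify that $d_1$, defined via generic translation, is actually a \emph{partial definable} predicate on $G \times G$, i.e., that the map $(x,y) \mapsto d(gx, gy)$ — with $g$ generic over $Axy$ — is given by a continuous function on $[G\times G]$, the space of types. Here one uses the standard fact from stable group theory that the value $d(gx, gy)$ for $g$ generic over $Axy$ depends only on $\tp(x,y/A)$: indeed $\tp(gx,gy/A)$ is determined by $\tp(x,y/A)$ together with the (unique, by stationarity) generic type of $g$, and $d$ is a continuous predicate. So we get a well-defined function $\phi\colon [G\times G] \to [0,1]$; its continuity can be seen because for any formula, the definition of the generic type over $A$ is itself given by a formula (stable definability), so $\phi$ is a composition of continuous maps on type spaces. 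This yields that $d_1$ is a partial type-definable predicate on $G\times G$, hence (being type-definable) partial definable by the results of \fref{sec:Definability}. Together with the invariance checked above, $d_1$ is a partial invariant definable metric on $G$.

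Finally, apply \fref{prp:GrpInvMetModExt}: since $G$ admits a partial invariant definable metric, it admits a total one. Strictly speaking \fref{prp:GrpInvMetModExt} produces a total definable metric $d_2$ with $d_2 \sqsupseteq h \circ d_1$ for some increasing $h$ with $h(0)=0$, $h \geq \id$; such an $h\circ d_1$ is still an invariant metric on $G$ (composing an invariant metric with such an $h$ preserves invariance and, since $h$ is strictly positive off $0$, the metric property), so $d_2$ restricted to $G$ is an invariant metric, which is what the statement asks for. The main obstacle I expect is the well-definedness and continuity of the generic-translation formula — i.e., making precise that $(x,y)\mapsto d(gx,gy)$ for generic $g$ is genuinely a continuous function of $\tp(xy/A)$ and descends to a partial definable predicate on $G\times G$ rather than merely an invariant one; this rests on stationarity of the generic type of a connected group (\fref{fct:ConnComp}) and on stable definability, and all the invariance bookkeeping then follows routinely from \fref{fct:Geenric}.
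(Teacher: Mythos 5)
Your overall strategy --- build a partial invariant definable metric on $G$ by generic translation and then invoke \fref{prp:GrpInvMetModExt} --- is exactly the paper's, and your treatment of definability (the value depends only on $\tp(xy/A)$ by stationarity of the generic type of a connected group, and is given by the $\psi_0$-definition of a definable type, hence yields a partial definable predicate on $G\times G$) is sound. The problem is the construction you finally commit to. For $d_1(x,y)=d(gx,gy)$ with $g$ generic over $Axy$, left-invariance does follow from \fref{fct:Geenric} as you say, but inverse-invariance does not: $d_1(x^{-1},y^{-1})=d(gx^{-1},gy^{-1})$, and there is no reason this should equal $d(gx,gy)$. The ambient metric $d$ carries no compatibility whatsoever with the group inverse, and the fact that $g^{-1}$ is again generic does not help, since $(gx)^{-1}=x^{-1}g^{-1}$ is a \emph{right} translate --- applying inversion trades a left translate for a right translate composed with the (not necessarily $d$-isometric) inverse map. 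Likewise, your fallback of ``combining'' a left-invariant metric with a right-invariant one (say by taking their join) produces a metric with neither invariance, since the join of the two is not preserved by either kind of translation. So the ``slicker route'' collapses at the inverse-invariance step.

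The construction you floated and then discarded is the one that works, and it is what the paper does: take $\psi_0(u,v;x,y)\sqsupseteq d(uxv,uyv)$ and let $\psi_1(x,y)$ be the $\psi_0$-definition of $p\otimes p$, so that $\psi_1(x,y)=d(uxv,uyv)$ for \emph{any} pair $(u,v)$ of independent generics over $x,y$. Restricting to $G^2$ gives a metric (choose $u,v$ generic over all of $x,y,z$ at once to check the axioms for a triple), and left-invariance follows because $uz,v$ are again independent generics over $x,y$, whence $d_1(zx,zy)=d\bigl((uz)xv,(uz)yv\bigr)=d_1(x,y)$; right-invariance is symmetric. With the two-sided translation in place of the one-sided one, the rest of your write-up (definability via stationarity and definability of types, then \fref{prp:GrpInvMetModExt}, noting that postcomposition with $h$ preserves invariance) goes through as you describe.
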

\begin{proof}
  Let $p(x)$ be the unique generic type of $G$.
  stationary as well.
  Again let $\psi_0(u,v;x,y) \sqsupseteq d(uxv,uyv)$.
  As $(p \otimes p)(u,v)$
  (the free product of $p$ with itself)
  is a definable stationary type over the same parameters
  as $G$, let $\psi_1(x,y)$ be its $\psi_0$-definition.
  Thus, if $x,y \in G$ and $u,v$ are any independent generic elements
  over $x,y$, then $\psi_1(x,y) = d(uxv,uyv)$.

  Let $d_1(x,y) = \psi_1(x,y)\rest_{G^2}$, and we claim it is an
  invariant metric on $G$.
  To verify it is a metric, let $x,y,z \in G$ be any three elements.
  Then choosing $u,v$ to be independent generics over $x,y,z$ we can
  verify the metric axioms for this triplet.
  We also get that $uz,v$ are two independent generics over $x,y$,
  whereby $d_1(zx,zy) = d(uzxv,uzyv) = d_1(x,y)$.
  Right-invariance is verified similarly.

  Apply \fref{prp:GrpInvMetModExt} to conclude.
\end{proof}

Notice that if we knew how to show, as in classical first order logic,
that in a stable continuous theory every type-definable group is
contained in a definable one, we would obtain an alternative proof for
\fref{thm:StabConnGrpInvMet} using \fref{prp:DefGrpInvMet},
without the connectedness assumption.

\section{The main theorem (full version)}
\label{sec:MainThmFull}

\begin{thm}
  \label{thm:MainThm}
  A type-definable group in an $\aleph_0$-stable theory
  is definable.
\end{thm}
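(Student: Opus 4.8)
The plan is to deduce this from \fref{thm:MainThmApprox}, after two reductions: first to a connected group, and then, by a change of the ambient metric, to the case of an invariant metric.

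Let $G$ be a type-definable group, say over $A$, in an $\aleph_0$-stable (hence stable) theory. By \fref{fct:ConnComp} the connected component $G^0$ is a type-definable normal subgroup of $G$, type-definable over $A$, of bounded index, and connected; by \fref{prp:BddIdxDefGrp} it suffices to prove that $G^0$ is definable, so we may replace $G$ by $G^0$ and assume $G$ itself is connected. Then \fref{thm:StabConnGrpInvMet} supplies a total definable metric $d_2$ on the sort of $G$ which is invariant on $G$, and \fref{lem:ChangeAmbientMetric} lets us pass to the structure $\cM'$ whose standard metric is $d_2$, the original metric $d$ being demoted to a new, non-distinguished symbol. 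That lemma guarantees that $\cM'$ has the same definable sets and predicates as the original structure --- hence the same definable functions, so $G$ is still a type-definable group in $\cM'$ --- and it therefore suffices to prove that $G$ is definable in $\cM'$. In $\cM'$ the standard metric $d_2$ is invariant on $G$, so \fref{thm:MainThmApprox} finishes the proof, provided $\Th(\cM')$ is $\aleph_0$-stable.

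This last point is the only step that requires an argument. Since $d$ and $d_2$ are both total definable metrics on the full home sort --- a (trivially) type-definable set --- they are uniformly equivalent by \fref{lem:EquivDefMet}; uniformly equivalent metrics on the home sort induce uniformly equivalent metrics on each $\tS_n$, so for every $\varepsilon$ the rank $\RM_\varepsilon$ computed with respect to $d_2$ is squeezed between $\RM$-ranks computed with respect to $d$ at parameters slightly above and below $\varepsilon$. As $\aleph_0$-stability amounts to all the $\RM_\varepsilon$ being ordinal-valued (and, equivalently, to separability of type spaces over separable models, which is likewise unaffected by a uniformly equivalent rescaling), it transfers from $\Th(\cM)$ to $\Th(\cM')$. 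Consequently $G$ is definable in $\cM'$, hence definable, and \fref{prp:BddIdxDefGrp} then yields definability of the group we started with. Apart from the metric swap and the bookkeeping about the connected component and bounded index, the proof is pure assembly of the results established above; the hard work was spent in \fref{thm:MainThmApprox}, \fref{thm:StabConnGrpInvMet}, and the metric-modification machinery of \fref{sec:DefMet}.
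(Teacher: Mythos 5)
Your proof is correct and follows exactly the paper's own argument: reduce to the connected component via \fref{prp:BddIdxDefGrp}, obtain a total invariant metric from \fref{thm:StabConnGrpInvMet}, swap it in as the ambient metric via \fref{lem:ChangeAmbientMetric}, and apply \fref{thm:MainThmApprox}. Your extra paragraph checking that $\aleph_0$-stability survives the change of metric (via uniform equivalence of definable metrics, \fref{lem:EquivDefMet}) addresses a point the paper leaves implicit and is a welcome addition, but it does not change the route.
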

\begin{proof}
  By \fref{prp:BddIdxDefGrp} it will suffice to show that the
  connected component of $G$ is definable, so
  we may assume that $G$ is connected.
  Then by \fref{thm:StabConnGrpInvMet} there exists a total definable
  metric $d_1$ which is invariant on $G$.
  By \fref{lem:ChangeAmbientMetric} we assume $d_1$ is the ambient
  metric.
  Now apply \fref{thm:MainThmApprox}.
\end{proof}

Notice that in the proof we needed to pass to the connected component
since we do not know yet (although it seems plausible) whether a general
type-definable group in a stable theory admits an invariant metric.
Moreover, the passage from an partial invariant definable metric on the
connected component to a total one required allowing a modification
to that metric.
\textit{A posteriori} we have:

\begin{cor}
  \label{cor:MainThm}
  Let $G$ be a type-definable group in an $\aleph_0$-stable theory.
  Then:
  \begin{enumerate}
  \item $G$ admits an invariant metric.
  \item Every partial metric on $G$ extends to a total one.
  \end{enumerate}
\end{cor}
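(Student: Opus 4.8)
The plan is to reduce both clauses to the case of a \emph{definable} group, which is exactly what \fref{thm:MainThm} provides. So the first and only substantial step is to invoke \fref{thm:MainThm}: in an $\aleph_0$-stable theory the type-definable group $G$ is in fact definable. Everything else is an assembly of results already established for definable sets in \fref{sec:DefMet}, with no further use of $\aleph_0$-stability (or even of stability).

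For the first clause I would apply \fref{prp:DefGrpInvMet} directly to the definable group $G$: since $G$ is now a definable set one may quantify over its members, so $d_1(x,y) = \sup_{u,v\in G} d(uxv,uyv)$ is a definable predicate defining an invariant metric on $G$, which extends to a total definable metric by \fref{prp:DefMetExt}. For the second clause, start from an arbitrary partial definable metric $d_1$ on $G$; since $G$ is a definable set, \fref{prp:DefMetExt} applies verbatim and extends $d_1$ to a total definable metric (concretely, pass to the definable pseudo-metric $d_2(x,y) = \sup_{z\in G}|\psi_1(x,z)-\psi_1(y,z)|$ for a chosen $\psi_1 \sqsupseteq d_1$, and repair definiteness using \fref{lem:PsMetToMet}).

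I do not expect a genuine obstacle here — all the difficulty was already spent in proving \fref{thm:MainThm}, which itself went through the connected component and the metric modification $h\circ d_1$ forced by \fref{thm:MetModExt} and \fref{thm:StabConnGrpInvMet}. The point worth making explicit in the writeup is precisely that these detours become unnecessary \emph{a posteriori}: once $G$ is known to be definable we can drop the connectedness hypothesis of \fref{thm:StabConnGrpInvMet} and obtain genuine (un-modified) extensions and genuine invariance, directly from \fref{prp:DefGrpInvMet} and \fref{prp:DefMetExt}. So the proof is short: cite \fref{thm:MainThm} for definability, then cite \fref{prp:DefGrpInvMet} for (1) and \fref{prp:DefMetExt} for (2).
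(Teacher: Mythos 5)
Your proposal is correct and coincides with the paper's own argument: cite \fref{thm:MainThm} to get that $G$ is definable, then apply \fref{prp:DefGrpInvMet} for the invariant metric and \fref{prp:DefMetExt} for the extension of partial metrics. Nothing further is needed.
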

\begin{proof}
  By the main theorem, $G$ is definable, so just apply
   \fref{prp:DefGrpInvMet} and \fref{prp:DefMetExt}.
\end{proof}

\section{An application: descending chain conditions}
\label{sec:ChainConditions}

\begin{dfn}
  \label{dfn:ApproxStab}
  Let $\{X_\alpha\}_{\alpha < \lambda}$
  be a descending sequence of closed sets in a metric space.
  We say that the sequence \emph{approximately stabilises}
  if for every $\varepsilon > 0$ there exists
  $\alpha_\varepsilon < \lambda$ such that
  $X_{\alpha_\varepsilon} \subseteq B(X_\beta,\varepsilon)$
  for all $\beta < \lambda$.
\end{dfn}
Notice an approximately stabilising sequence of closed sets
whose length has uncountable cofinality necessarily stabilises.

\begin{lem}
  \label{lem:StrApproxStab}
  Assume $\{X_\alpha\}_{\alpha < \lambda}$ is a descending
  sequence of closed subsets of a complete metric space
  (e.g., type-definable subsets of a continuous structure),
  and let $X = \bigcap_\alpha X_\alpha$.
  Then $\{X_\alpha\}_{\alpha < \lambda}$
  approximately stabilises if and only if
  for all $\varepsilon > 0$ there is $\alpha_\varepsilon$ such that
  $X_{\alpha_\varepsilon} \subseteq B(X,\varepsilon)$.
\end{lem}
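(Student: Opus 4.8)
The plan is to prove the substantial implication, from left to right; the converse is immediate. Indeed, since $X = \bigcap_\alpha X_\alpha \subseteq X_\beta$ for every $\beta < \lambda$, we have $B(X,\varepsilon) \subseteq B(X_\beta,\varepsilon)$, so a single $\alpha_\varepsilon$ with $X_{\alpha_\varepsilon} \subseteq B(X,\varepsilon)$ witnesses approximate stabilisation at $\varepsilon$ for every $\beta$ simultaneously.

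So assume $\{X_\alpha\}_{\alpha<\lambda}$ approximately stabilises and fix $\varepsilon > 0$; put $\varepsilon_n = \varepsilon 2^{-n-2}$, so that $\sum_{n<\omega}\varepsilon_n < \varepsilon$. For each $n$ I would use the definition of approximate stabilisation to pick an ordinal $\alpha_n < \lambda$ with $X_{\alpha_n} \subseteq B(X_\beta,\varepsilon_n)$ for all $\beta < \lambda$. I then claim $X_{\alpha_0} \subseteq B(X,\varepsilon)$, which is the desired conclusion with $\alpha_\varepsilon = \alpha_0$. Given $a_0 \in X_{\alpha_0}$, build inductively a sequence $a_n \in X_{\alpha_n}$: having $a_n$, apply the case $\beta = \alpha_{n+1}$ of the choice of $\alpha_n$, namely $X_{\alpha_n} \subseteq B(X_{\alpha_{n+1}},\varepsilon_n)$, to obtain $a_{n+1} \in X_{\alpha_{n+1}}$ with $d(a_n,a_{n+1}) < \varepsilon_n$. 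Since $\sum_{n<\omega}\varepsilon_n$ converges, $(a_n)$ is Cauchy, hence converges in the complete metric space to some $a_\infty$, with $d(a_0,a_\infty) \le \sum_{n<\omega}\varepsilon_n < \varepsilon$.

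It remains to check that $a_\infty \in X$, i.e.\ that $a_\infty \in X_\beta$ for every $\beta < \lambda$, and this is the one point that needs care: the ordinals $\alpha_n$ need not be cofinal in $\lambda$ — indeed when $\cf(\lambda) > \omega$ they are bounded well below $\lambda$ — so one cannot argue merely that the tail of $(a_n)$ lies in the relevant $X_\beta$. What saves the argument is that the approximate-stabilisation hypothesis bounds each $X_{\alpha_n}$ relative to \emph{every} $X_\beta$ at once: for any fixed $\beta < \lambda$ and any $n$ we have $a_n \in X_{\alpha_n} \subseteq B(X_\beta,\varepsilon_n)$, so $d(a_\infty,X_\beta) \le d(a_\infty,a_n) + \varepsilon_n$, which tends to $0$; since $X_\beta$ is closed, $a_\infty \in X_\beta$. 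Hence $a_\infty \in X$ and $a_0 \in B(X,\varepsilon)$; as $a_0 \in X_{\alpha_0}$ was arbitrary, $X_{\alpha_0} \subseteq B(X,\varepsilon)$, completing the proof. Apart from this observation the whole thing is routine Cauchy-sequence bookkeeping, and I do not anticipate any further obstacle.
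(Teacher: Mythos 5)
Your proof is correct, and it is a genuine (if modest) variation on the paper's argument rather than a reproduction of it. Both proofs turn on the same engine: starting from an arbitrary point of some $X_{\alpha_0}$, build a Cauchy sequence $a_n$ lying in successively chosen $X_{\alpha_n}$ with geometrically summable increments, and use completeness to produce a limit at distance $<\varepsilon$ from the starting point which must be shown to lie in $X$. Where you diverge is in the bookkeeping around that last step. The paper first disposes of the case $\cf(\lambda)>\omega$ (there approximate stabilisation forces genuine stabilisation, since some $X_{\alpha^*}$ is contained in $B(X_\beta,2^{-n})$ for all $n$ and all $\beta$, hence in every $X_\beta$), then reduces to $\omega$-indexed chains and guarantees $x\in X$ by arranging the chosen indices $n_m\geq m$ to be cofinal in $\omega$, so that every tail of the Cauchy sequence lies in any prescribed $X_n$. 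You instead treat arbitrary $\lambda$ uniformly and verify $a_\infty\in X_\beta$ for each $\beta$ separately, exploiting the ``for all $\beta$'' clause in the definition of approximate stabilisation to get $d(a_n,X_\beta)<\varepsilon_n$ for every $n$ and every $\beta$, then closing off with closedness of $X_\beta$. This buys you a cleaner proof: you avoid the cofinality case-split and the (only sketched) reduction from a general countable-cofinality $\lambda$ to an $\omega$-sequence, and you correctly flag the one point where a naive ``the tail lies in $X_\beta$'' argument would fail. The paper's route, in exchange, makes explicit the useful side remark that chains of uncountable cofinality actually stabilise.
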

\begin{proof}
  Right to left is immediate, so we prove left to right.
  In addition, if $\lambda$ has uncountable cofinality then
  the sequence stabilises precisely, i.e.,
  $X = X_\alpha$ for some (all) big enough $\alpha$.
  We are left with the case of a sequence
  $\{X_n\}_{n < \omega}$.

  By assumption, for each $m$ there is $n_m$ such that
  $X_{n_m} \subseteq B(X_n,2^{-m})$ for all $n$, and we may assume
  that $m \leq n_m$ for all $m$.
  Let $m_0$ be such that $2^{-m_0} < \varepsilon$.
  We claim that
  $X_{n_{m_0+1}} \subseteq B(X,\varepsilon)$.
  Indeed, let $x_0 \in X_{n_{m_0+1}}$.
  Given $x_k \in X_{n_{m_0+k+1}}$, let $x_{k+1} \in X_{n_{m_0+k+2}}$ be
  such that $d(x_k,x_{k+1}) < 2^{-m_0-k-1}$.
  Then the sequence $\{x_k\}$ converges to some point $x$, satisfying
  $d(x,x_k) < 2^{-m_0-k}$.
  Notice that for all $n$ we have $x_k \in X_n$ for $k$ large enough.
  Thus $x \in X_n$ for all $n$, whereby $x \in X$.
  We conclude that $d(x_0,X) < 2^{-m_0} < \varepsilon$, whence
  $X_{n_{m_0+1}} \subseteq B(X,\varepsilon)$, as desired.
\end{proof}

\begin{lem}
  \label{lem:ApproxStabDef}
  Let $\{X_\alpha\}_{\alpha<\lambda}$ be a descending sequence of definable
  sets.
  If the chain approximately stabilises (in any model) then
  $X = \bigcap X_\alpha$ is a definable set.
  The converse holds in sufficiently saturated models.
\end{lem}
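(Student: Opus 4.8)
The plan is to obtain both directions from two earlier results: the characterisation of definable sets in \fref{fct:DefSet}, and \fref{lem:StrApproxStab}, which (for a descending chain of closed subsets of a complete space) reduces approximate stabilisation to the assertion that for every $\varepsilon>0$ there is an index $\alpha_\varepsilon$ with $X_{\alpha_\varepsilon}\subseteq B(X,\varepsilon)$, where $X=\bigcap_\alpha X_\alpha$. Throughout I would fix a sufficiently saturated model and a parameter set $A$ over which all the $X_\alpha$, hence $X$ and all the metric balls below, are (type-)defined; since definability of a set passes between elementary extensions and restrictions (the corollary following \fref{prp:DefSetParam}), arguing in this model costs nothing, and by \fref{lem:LogNeighb} the logical-neighbourhood relations may all be computed in the single type space $\tS_n(A)$.

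For the forward implication, assume the chain approximately stabilises. Given $\varepsilon>0$, \fref{lem:StrApproxStab} supplies $\alpha:=\alpha_{\varepsilon/2}$ with $X_\alpha\subseteq B(X,\varepsilon/2)$. Because $X_\alpha$ is a definable set, the last item of \fref{fct:DefSet} gives $[X_\alpha]\subseteq[\bar B(X_\alpha,\varepsilon/2)]^\circ$ in $\tS_n(A)$, and a short triangle-inequality estimate upgrades $X_\alpha\subseteq B(X,\varepsilon/2)$ to $\bar B(X_\alpha,\varepsilon/2)\subseteq\bar B(X,\varepsilon)$. Hence
\[
[X]\subseteq[X_\alpha]\subseteq[\bar B(X_\alpha,\varepsilon/2)]^\circ\subseteq[\bar B(X,\varepsilon)]^\circ,
\]
so $\bar B(X,\varepsilon)$ is a logical neighbourhood of $X$; as $\varepsilon>0$ was arbitrary, \fref{fct:DefSet} yields that $X$ is definable.

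For the converse, assume $X$ is definable and fix $\varepsilon>0$. By \fref{fct:DefSet} there is a formula $\psi$ with $X\subseteq\{\psi=0\}\subseteq\{\psi<1\}\subseteq B(X,\varepsilon)$. I would then note that the set of conditions $\bigcup_\alpha\{x\in X_\alpha\}\cup\{\psi(x)\geq1\}$ is inconsistent, since a realisation in the saturated model would lie in $\bigcap_\alpha X_\alpha=X$ and therefore satisfy $\psi=0$. Compactness makes finitely many of these conditions inconsistent; the finitely many ``$x\in X_\alpha$''-conditions occurring are subsumed by membership in the single set $X_{\alpha_\varepsilon}$ with $\alpha_\varepsilon$ the largest index appearing (here is the only use of the descending hypothesis), so $X_{\alpha_\varepsilon}\cap\{\psi\geq1\}=\emptyset$, whence $X_{\alpha_\varepsilon}\subseteq\{\psi<1\}\subseteq B(X,\varepsilon)\subseteq B(X_\beta,\varepsilon)$ for all $\beta$. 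Then \fref{lem:StrApproxStab} gives approximate stabilisation.

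I do not anticipate a genuine obstacle: the statement is essentially a repackaging of \fref{fct:DefSet} together with compactness. The two places that call for a little care are keeping every interior $[\,\cdot\,]^\circ$ computed over the fixed parameter set $A$ (so that \fref{lem:LogNeighb} legitimately lets one slide between a logical neighbourhood of $X_\alpha$ and one of $X$), and the bookkeeping in the compactness step that checks a finite fragment of $\bigcup_\alpha\{x\in X_\alpha\}$ is implied by a single $X_{\alpha_\varepsilon}$.
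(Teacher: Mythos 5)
Your proposal is correct and follows essentially the same route as the paper: \fref{lem:StrApproxStab} combined with the characterisations in \fref{fct:DefSet} for the forward direction, and a compactness argument (using that the chain is descending to absorb finitely many membership conditions into one) for the converse. The only real difference is that the paper proves the forward direction by applying criterion \fref{fct:DefSet}.\fref{item:DefSetMetricNeighbourhood} twice --- first to the definable set $X_{\alpha_{\varepsilon/2}}$, then to $X$ --- which works in an arbitrary model and so dispenses with your detour through a saturated model and the logical-neighbourhood criterion, a detour that is harmless but requires the (unstated) check that approximate stabilisation transfers up to the elementary extension and definability transfers back down.
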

\begin{proof}
  Assume first the sequence approximately stabilises.
  By \fref{lem:StrApproxStab} we have
  $X_{\alpha_\varepsilon} \subseteq B(X,\varepsilon)$
  for all $\varepsilon> 0$, whereby
  $X \subseteq X_{\alpha_{\varepsilon/2}}
  \subseteq B(X_{\alpha_{\varepsilon/2}},\varepsilon/2)
  \subseteq B(X,\varepsilon)$
  for all $\varepsilon > 0$.
  Now apply the criterion for definability in
  \fref{fct:DefSet}.\fref{item:DefSetMetricNeighbourhood}
  twice.
  First, since $X_{\varepsilon/2}$ is definable there is a formula
  $\psi_\varepsilon$ such that
  \begin{gather*}
    X \subseteq X_{\alpha_{\varepsilon/2}}
    \subseteq
    \bigl\{ \bar a \in M^n\colon \psi_\varepsilon(\bar a) = 0 \bigr\}
    \subseteq
    \bigl\{ \bar a \in M^n\colon \psi_\varepsilon(\bar a) < 1 \bigr\}
    \subseteq
    B(X_{\alpha_{\varepsilon/2}},\varepsilon/2)
    \subseteq B(X,\varepsilon).
  \end{gather*}
  A second application shows that $X$ is definable.

  Conversely, assume $X$ is definable, and that the equality
  $X = \bigcap X_\alpha$ holds in a sufficiently saturated model.
  Then by compactness, for all $\varepsilon > 0$ there is
  $\alpha_\varepsilon$ such that
  $\{x \in X_{\alpha_\varepsilon}\} \cup \{d(x,X) \geq \varepsilon\}$
  is contradictory, whereby
  $X_{\alpha_\varepsilon} \subseteq B(X,\varepsilon)$.
\end{proof}

\begin{dfn}
  We say that a class $\cC$ of sets satisfies the
  \emph{metric descending chain condition (MDCC)} if every descending
  chain in $\cC$ approximately stabilises.
\end{dfn}

\begin{thm}
  \label{thm:OStabMDCC}
  Let $G$ be a type-definable group in an $\aleph_0$-stable structure,
  $\cG$ the class of type-definable subgroups of $G$.
  Then $\cG$ satisfies the MDCC.
\end{thm}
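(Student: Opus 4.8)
The plan is to deduce the MDCC quickly from the main theorem together with \fref{lem:ApproxStabDef}. First I would observe that, by \fref{thm:MainThm}, every type-definable subgroup of $G$ is in fact definable, so every member of $\cG$ is a definable set; moreover, given any descending chain $\{H_\alpha\}_{\alpha<\lambda}$ in $\cG$, its intersection $H=\bigcap_{\alpha<\lambda}H_\alpha$ is again a subgroup of $G$ and is type-definable (it is cut out by the union of all the conditions defining the $H_\alpha$), hence, by \fref{thm:MainThm} once more, $H$ too is a definable set.

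Next I would apply the converse direction of \fref{lem:ApproxStabDef}: $\{H_\alpha\}_{\alpha<\lambda}$ is a descending chain of definable sets whose intersection $H$ (computed in a sufficiently saturated model) is definable, and therefore the chain approximately stabilises --- explicitly, for each $\varepsilon>0$ there is $\alpha_\varepsilon<\lambda$ with $H_{\alpha_\varepsilon}\subseteq B(H,\varepsilon)\subseteq B(H_\beta,\varepsilon)$ for every $\beta<\lambda$, the last inclusion holding because $H\subseteq H_\beta$. As the chain was arbitrary this is precisely the statement that $\cG$ satisfies the MDCC. The only care needed is the usual bookkeeping: \fref{lem:ApproxStabDef} is applied in a sufficiently saturated elementary extension, which is harmless since definability of sets and of their intersections transfers under elementary extension (as recorded after \fref{prp:DefSetParam}), so that approximate stabilisation of the chain does not depend on the choice of such a model. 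I do not expect any genuine obstacle in this route; all the real content has been absorbed into \fref{thm:MainThm}.

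For comparison I would also indicate the more self-contained argument that avoids invoking the full strength of \fref{thm:MainThm}. After arranging, via \fref{prp:DefGrpInvMet} and a change of the ambient metric, that the metric is invariant on $G$, one argues along the lines of \fref{thm:MainThmApprox}: a failure of $\varepsilon$-approximate stabilisation yields, for each $\alpha$, a later $H_\beta\subseteq H_\alpha$ and an element $g\in H_\alpha$ with $d(g,H_\beta)>\varepsilon$, so that the coset $gH_\beta$ and $H_\beta$ are two $\varepsilon$-separated, definably isometric copies of $H_\beta$ sitting inside $H_\alpha$; at a scale $r-\varepsilon'$ on which ranks behave well this forces $\dM_{r-\varepsilon'}$ to increase strictly past any bound, contradicting $\dM_{r-\varepsilon'}(H_\alpha)\le\dM_{r-\varepsilon'}(G)<\infty$. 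The main obstacle in that approach is exactly the scale-juggling between $\RM_r$ and $\RM_{r-\varepsilon'}$ managed by \fref{lem:ConstRM} and \fref{lem:IsomMR}; given \fref{thm:MainThm}, however, the short proof above is the one I would write down.
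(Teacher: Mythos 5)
Your proposal is correct and follows essentially the same route as the paper: the intersection of the chain is a type-definable subgroup, hence definable by \fref{thm:MainThm}, and the converse direction of \fref{lem:ApproxStabDef}, applied in a sufficiently saturated model, yields approximate stabilisation. The extra observations (that each $H_\alpha$ is itself definable, and that $B(H,\varepsilon)\subseteq B(H_\beta,\varepsilon)$ since $H\subseteq H_\beta$) are correct and only make the application of the lemma more explicit than in the paper's two-line argument.
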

\begin{proof}
  Let $\{G_\alpha\}_{\alpha<\lambda} \subseteq \cG$
  be a descending chain of type-definable
  groups, and let $G_\infty = \bigcap G_\alpha$.
  Then $G_\infty$ is definable by the main theorem.
  If necessary pass to a sufficiently saturated elementary extension
  of the ambient model and apply the converse part of
  \fref{lem:ApproxStabDef}.
\end{proof}

We conclude with a relatively easy chain condition for arbitrary
stable theories (and which has nothing to do with our main theorem).

\begin{thm}
  \label{thm:UnifDefGrpMDCC}
  Let $\cX$ be a family of uniformly definable sets in a stable
  theory, meaning that there is a definable predicate
  $\varphi(x,y)$ such that for every $X \in \cX$ the predicate
  $d(x,X)$ is an instance $\varphi(x,a_X)$.
  Then $\cX$ satisfies the MDCC.

  Moreover, if $X$ is the intersection of any descending chain in
  $\cX$ then $d(x,X)$ is also definable by an instance of
  $\varphi$.
\end{thm}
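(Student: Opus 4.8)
The plan is to deduce both assertions from the stability of the single definable predicate $\varphi(x,y)$: since $T$ is stable, $\varphi$ is a stable predicate, and this is essentially all that is needed. I would work in a monster model $\fM$ and fix a descending chain $\{X_\alpha\}_{\alpha<\lambda}\subseteq\cX$, writing $d(x,X_\alpha)=\varphi(x,a_\alpha)$, so that $\varphi(x,a_\alpha)\le\varphi(x,a_\beta)$ whenever $\alpha\le\beta$.

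For the MDCC I would argue by contradiction. Suppose the chain does not approximately stabilise, so there is $\varepsilon>0$ such that for every $\alpha$ there is $\beta$ with $X_\alpha\not\subseteq B(X_\beta,\varepsilon)$, and such a $\beta$ must be $>\alpha$. Iterating from $\alpha_0=0$ one builds $\alpha_0<\alpha_1<\cdots<\lambda$ with $X_{\alpha_n}\not\subseteq B(X_{\alpha_{n+1}},\varepsilon)$, hence points $c_n\in X_{\alpha_n}$ with $d(c_n,X_{\alpha_{n+1}})\ge\varepsilon$. Put $b_n=a_{\alpha_n}$. Since the chain is descending, $c_n\in X_{\alpha_n}\subseteq X_{\alpha_m}$ for $m\le n$, while $X_{\alpha_m}\subseteq X_{\alpha_{n+1}}$ for $m>n$, so
\[
\varphi(c_n,b_m)=0\quad(m\le n),\qquad \varphi(c_n,b_m)\ge\varepsilon\quad(m>n).
\]
This is an infinite ladder for $\varphi$ with a gap between the values $0$ and $\varepsilon$, i.e.\ an instance of the order property, contradicting the stability of $\varphi$. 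Hence every descending chain in $\cX$ approximately stabilises.

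For the ``moreover'' clause, fix such a chain, let $X=\bigcap_\alpha X_\alpha$, and invoke \fref{lem:StrApproxStab} (so that for each $\varepsilon>0$ some $X_{\alpha_\varepsilon}\subseteq B(X,\varepsilon)$) together with \fref{lem:ApproxStabDef} (so that $X$ is definable and $d(x,X)$ a definable predicate). The inclusion $X\subseteq X_{\alpha_\varepsilon}$ gives $\varphi(x,a_{\alpha_\varepsilon})=d(x,X_{\alpha_\varepsilon})\le d(x,X)$, and $X_{\alpha_\varepsilon}\subseteq B(X,\varepsilon)$ gives $d(x,X)\le d(x,X_{\alpha_\varepsilon})+\varepsilon$, so $0\le d(x,X)-\varphi(x,a_{\alpha_\varepsilon})\le\varepsilon$ uniformly in $x$. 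I would then consider the partial type $q(y)=\{\,|\varphi(c,y)-d(c,X)|\le\delta\colon c\in\fM^n,\ \delta>0\,\}$: any finite fragment of $q$ involves a least $\delta_0>0$, and for $\varepsilon<\delta_0$ the element $a_{\alpha_\varepsilon}$ satisfies \emph{all} of $q$ up to $\varepsilon$, hence that fragment; so $q$ is consistent, and any realisation $a$ satisfies $\varphi(x,a)=d(x,X)$ identically. Thus $d(x,X)$ is again an instance of $\varphi$.

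The one genuinely non-formal step is the extraction of the order-property ladder from a non–approximately–stabilising chain: this is where stability enters and is the crux of the argument. Once it is in place, the approximate stabilisation and the identification of the limiting instance of $\varphi$ are soft; in particular the ``moreover'' clause uses stability only through the MDCC already established. The remaining points to check are routine — that the configuration $(c_n),(b_m)$ matches whichever formulation of the order property one adopts (reindexing if necessary), and the elementary inequalities relating $B(\cdot,\varepsilon)$, $d(\cdot,X)$ and $d(\cdot,X_{\alpha_\varepsilon})$.
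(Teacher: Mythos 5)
Your proof is correct and rests on the same mechanism as the paper's: stability of the single predicate $\varphi$ forbids exactly the order-property ladder you construct, which is the same thing as saying that the increasing chain of predicates $\varphi(x,a_\alpha)$ converges uniformly, and compactness then yields the limiting instance. The only organisational difference is that the paper proves the ``moreover'' clause first (uniform convergence of the $\varphi(x,a_\alpha)$ by no order property, then \fref{prp:DefSetParam} to arrange $\varphi(x,a)=d(x,X)$) and deduces the MDCC via \fref{lem:ApproxStabDef}, whereas you prove the MDCC first by exhibiting the ladder explicitly and then extract the limit instance from the approximate stabilisation; both routes are sound.
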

\begin{proof}
  By \fref{lem:ApproxStabDef} it is enough to prove the moreover part.
  Assume that $\{X_\alpha\}_{\alpha<\lambda} \subseteq \cX$
  is a descending chain, and let
  $d(x,X_\alpha) = \varphi(x,a_\alpha)$.
  The sequence of definable predicates $\varphi(x,a_\alpha)$ is
  increasing, and by definition of stability (no order property) it
  must converge uniformly.
  It follows by an easy compactness argument that
  there exists a parameter $a$ such that
  $\varphi(x,a) = \lim_\alpha \varphi(x,a_\alpha)$ uniformly,
  and its zero set is necessarily
  $X = \bigcap_\alpha X_\alpha$.
  Moreover, by \fref{prp:DefSetParam},
  we may further arrange that $\models \Sigma_\psi(a)$,
  i.e., that $\varphi(x,a) = d(x,X)$.
\end{proof}

\providecommand{\bysame}{\leavevmode\hbox to3em{\hrulefill}\thinspace}
\providecommand{\MR}{\relax\ifhmode\unskip\space\fi MR }
% \MRhref is called by the amsart/book/proc definition of \MR.
\providecommand{\MRhref}[2]{%
  \href{http://www.ams.org/mathscinet-getitem?mr=#1}{#2}
}
\providecommand{\href}[2]{#2}


\begin{thebibliography}{BBHU08}

\bibitem[BBHU08]{BenYaacov-Berenstein-Henson-Usvyatsov:NewtonMS}
Itaï {Ben Yaacov}, Alexander Berenstein, C.~Ward Henson, and Alexander
  Usvyatsov, \href{http://math.univ-lyon1.fr/~begnac/articles/mtfms.pdf}
  {\emph{Model theory for metric structures}}, Model theory with Applications
  to Algebra and Analysis, volume 2 (Zoé Chatzidakis, Dugald Macpherson, Anand
  Pillay, and Alex Wilkie, eds.), London Math Society Lecture Note Series, vol.
  350, Cambridge University Press, 2008, pp.~315--427.

\bibitem[{Ben}]{BenYaacov:StableGroups}
Itaï {Ben Yaacov},
  \href{http://math.univ-lyon1.fr/~begnac/articles/StabGrps.pdf}
  {\emph{Stability and stable groups in continuous logic}}, submitted,
  \href{http://arxiv.org/abs/0810.4087}{arXiv:0810.4087}.

\bibitem[{Ben}03]{BenYaacov:ThicknessAndCatTSF}
\bysame, \href{http://dx.doi.org/10.4064/fm179-3-2} {\emph{Thickness, and a
  categoric view of type-space functors}}, Fundamenta Mathematic{\ae}
  \textbf{179} (2003), 199--224.

\bibitem[{Ben}05]{BenYaacov:Morley}
\bysame, \href{http://dx.doi.org/10.2178/jsl/1122038916} {\emph{Uncountable
  dense categoricity in cats}}, Journal of Symbolic Logic \textbf{70} (2005),
  no.~3, 829--860.

\bibitem[{Ben}08]{BenYaacov:TopometricSpacesAndPerturbations}
\bysame, \href{http://dx.doi.org/10.1007/s11813-008-0009-x} {\emph{Topometric
  spaces and perturbations of metric structures}}, Logic and Analysis
  \textbf{1} (2008), no.~3--4, 235--272,
  \href{http://arxiv.org/abs/0802.4458}{arXiv:0802.4458}.

\bibitem[Ber06]{Berenstein:DefinableSubgroupsOfMeasureAlgebras}
Alexander Berenstein, \emph{Definable subgroups of measure algebras},
  Mathematical Logic Quarterly \textbf{52} (2006), no.~4, 367--374.

\bibitem[BU]{BenYaacov-Usvyatsov:CFO}
Itaï {Ben Yaacov} and Alexander Usvyatsov,
  \href{http://math.univ-lyon1.fr/~begnac/articles/cfo.pdf} {\emph{Continuous
  first order logic and local stability}}, Transactions of the American
  Mathematical Society, to appear,
  \href{http://arxiv.org/abs/0801.4303}{arXiv:0801.4303}.

\end{thebibliography}
\end{document}